\newtheorem{theorem}{Theorem}
\newtheorem{proposition}[theorem]{Proposition}
\newtheorem{definition}[theorem]{Definition}
\newtheorem{corollary}[theorem]{Corollary}
\newtheorem{remark}[theorem]{Remark}
\newcommand{\CP}{\mathbb{CP}}
\newcommand{\CC}{\mathbb{C}}
\newcommand{\RR}{\mathbb{R}}
\newcommand{\ZZ}{\mathbb{Z}}
\numberwithin{equation}{section}
\numberwithin{theorem}{section}
\numberwithin{table}{section}
\numberwithin{table}{section}
\begin{document}
\bibliographystyle{amsalpha} 
\title[Anti-self-dual orbifolds]{Anti-self-dual orbifolds with cyclic\\ quotient singularities}
\author{Michael T. Lock}
\address{Department of Mathematics, University of Wisconsin, Madison, 
WI, 53706}
\email{lock@math.wisc.edu}
\author{Jeff A. Viaclovsky}
\address{Department of Mathematics, University of Wisconsin, Madison, 
WI, 53706}
\email{jeffv@math.wisc.edu}
\thanks{Research partially supported by NSF Grants DMS-0804042 and DMS-1105187}
\begin{abstract}
An index theorem for the anti-self-dual deformation complex on anti-self-dual orbifolds
with cyclic quotient singularities is proved. We present two applications of this theorem. The first is to compute the dimension of the deformation space of the Calderbank-Singer scalar-flat K\"ahler toric ALE spaces. A corollary of this is that, except for the Eguchi-Hanson metric, all of these spaces admit non-toric anti-self-dual deformations, thus yielding many new examples of anti-self-dual ALE spaces. For our second application,  we compute the dimension of the deformation space of the canonical Bochner-K\"ahler metric on any weighted projective space $\CP^2_{(r,q,p)}$ for relatively prime integers $1 < r < q < p$. A corollary of this is that, while these metrics are rigid as Bochner-K\"ahler metrics, infinitely many of these admit non-trival self-dual deformations, yielding a large class of new examples of self-dual orbifold metrics on certain weighted projective spaces.
\end{abstract}
\date{May 17, 2012}
\maketitle
\setcounter{tocdepth}{1}
\tableofcontents

\section{Introduction}
\label{intro}
If $(M^{4},g)$ is an {\em{oriented}} four-dimensional Riemannian manifold, 
the Hodge star operator $*:\Lambda^{2} \mapsto\Lambda^{2}$ satisfies $*^2 = Id$, 
and induces the decomposition on the space of $2$-forms
$\Lambda^{2}=\Lambda^{2}_{+} \oplus\Lambda^{2}_{-}$,
where $\Lambda^{2}_{\pm}$ are the $\pm 1$ eigenspaces of $*$. 
The Weyl tensor can be viewed as an operator 
$\mathcal{W}_g: \Lambda^2 \rightarrow \Lambda^2$, so this 
decomposition enables us to decompose the Weyl tensor 
as $\mathcal{W}_g = \mathcal{W}^{+}_g + \mathcal{W}^{-}_g$,
into the self-dual and anti-self-dual Weyl tensors, respectively. 
The metric $g$ is called {\em{anti-self-dual}} if $\mathcal{W}^+_g = 0$,
and $g$ is called {\em{self-dual}} if $\mathcal{W}^-_g = 0$.
Note that, by reversing orientation, a self-dual manifold is converted into
an anti-self-dual manifold, and vice versa. 
There are now so many known examples of anti-self-dual metrics on 
various compact four-manifolds, that it is difficult to give a complete 
list here, and we refer the reader to \cite{ViaclovskyIndex} for a 
recent list of references. 

  The deformation theory of anti-self-dual metrics is roughly analogous 
to the theory of deformation of complex structures. 
If $(M,g)$ is an anti-self-dual four-manifold, the anti-self-dual 
deformation complex is given by 
\begin{align}
\label{thecomplex}
\Gamma(T^*M) \overset{\mathcal{K}_g}{\longrightarrow} 
\Gamma(S^2_0(T^*M))  \overset{\mathcal{D}}{\longrightarrow}
\Gamma(S^2_0(\Lambda^2_+)),
\end{align}
where $\mathcal{K}_g$ is the conformal Killing operator defined 
by 
\begin{align}
( \mathcal{K}_g(\omega))_{ij} = \nabla_i \omega_j + \nabla_j \omega_i - 
\frac{1}{2} (\delta \omega) g, 
\end{align}
with $\delta \omega = \nabla^i \omega_i$,  
$S^2_0(T^*M)$ denotes traceless symmetric tensors, 
and $\mathcal{D} = (\mathcal{W}^+)_g'$ is the linearized self-dual Weyl curvature 
operator.

 If $M$ is a compact manifold then there is a formula for the index depending 
only upon topological quantities. The analytical index is given by
\begin{align}
Ind(M, g) = \dim( H^0(M,g)) -  \dim( H^1(M,g)) + \dim( H^2(M,g)),
\end{align}
where $H^i(M,g)$ is the $i$th cohomology of the complex \eqref{thecomplex}, 
for $i = 0,1,2$. 
The index is given in terms of topology via the Atiyah-Singer index theorem
\begin{align}
\label{manifoldindex}
Ind(M, g) = \frac{1}{2} ( 15 \chi(M) + 29 \tau(M)), 
\end{align}
where $\chi(M)$ is the Euler characteristic and 
$\tau(M)$ is the signature of $M$, see \cite{KotschickKing}.

The cohomology groups of the complex \eqref{thecomplex} yield information about 
the local structure of the moduli space of anti-self-dual conformal
classes, which we briefly recall \cite{Itoh2, KotschickKing}.
There is a map 
\begin{align}
\Psi: H^1(M,g) \rightarrow H^2(M,g)
\end{align} 
called the {\em{Kuranishi map}}
which is equivariant with respect to the action of $H^0$, 
and the moduli space of anti-self-dual conformal structures
near $g$ is localy isomorphic to 
$\Psi^{-1}(0) / H^0$. Therefore, if $H^2 = 0$, the moduli 
space is locally isomorphic to $H^1 / H^0$.

In this paper, we will be concerned with orbifolds
in dimension four with isolated singularities modeled on $\RR^4 / \Gamma$,
where $\Gamma$ is a finite subgroup of ${\rm{SO}}(4)$ 
acting freely on $\RR^4 \setminus \{0 \}$. 
We will say that $(M,g)$ is a {\em{Riemannian orbifold}} if $g$
is a smooth metric away from the singular points, and at any singular point, 
the metric is locally the quotient of a smooth $\Gamma$-invariant metric on $B^4$ under 
the orbifold group $\Gamma$.

The above results regarding the Kuranishi map are also valid 
for anti-self-dual Riemannian orbifolds. 
However, the index formula \eqref{manifoldindex} does not hold
without adding a correction term. 
In \cite{Kawasaki}, Kawasaki proved a version of the Atiyah-Singer index theorem  
for orbifolds, and gave a general formula for the correction term. 
Our first result is an explicit formula for this correction term 
for the complex \eqref{thecomplex} in the case that $\Gamma$ is an action 
of a cyclic group. In order to state this, we first make some definitions. 

For $1 \leq q < p$ relatively prime integers, we denote by $\Gamma_{(q,p)}$ the 
cyclic action 
\begin{align}
\label{qpaction}
\left(
\begin{matrix}
\exp^{2 \pi i k / p}   &  0 \\
0    & \exp^{2 \pi i k q / p } \\
\end{matrix}
\right),  \ \ 0 \leq k < p,
\end{align}
acting on $\RR^4$, which we identify with $\CC^2$ using
$z_1 = x_1 + i x_2$, and $z_2 = x_3 + i x_4$.
We will also refer to this action as a type $(q,p)$-action.
\begin{definition}{\em
A group action $\Gamma_1: G \rightarrow {\rm{SO}(4)}$ is {\em{conjugate}}
to another group action  $\Gamma_2: G \rightarrow {\rm{SO}(4)}$
if there exists an element $O \in O(4)$ 
such that for any $g \in G$, $\Gamma_1(g) \circ O = O \circ \Gamma_2(g)$. 
If $O \in {\rm{SO}}(4)$, then the actions are said to be 
orientation-preserving conjugate, while if $O \notin {\rm{SO}}(4)$,
the actions are said to be orientation-reversing conjugate.
}
\end{definition}

\begin{remark}
\label{actrem}
{\em
We note the important fact that if $\Gamma$ is an $\rm{SO}(4)$ 
representation of a cyclic group, then $\Gamma$ is orientation-preserving 
conjugate to a $\Gamma_{(q,p)}$-action \cite{MCC};
we therefore only need consider the $\Gamma_{(q,p)}$-actions.
Furthermore, for $1 \leq q , q' < p$, if a $\Gamma_{(q,p)}$-action is 
orientation-preserving conjugate to a $\Gamma_{(q',p)}$-action then 
$q q' \equiv 1 \mod p$.
We also note that a $\Gamma_{(q,p)}$-action is orientation-reversing conjugate 
to a $\Gamma_{(p-q,p)}$-action.
}
\end{remark}
We will employ the following modified Euclidean algorithm.
For $1 \leq q < p$ relatively prime integers, write
\begin{align}
\begin{split}\label{mea}
p&=e_1q-a_1\\
q&=e_2a_1-a_2\\
&\hspace{2mm} \vdots \\
a_{k-2}&=e_ka_{k-1}-1,
\end{split}
\end{align}
where $e_i \geq 2$, and $0 \leq a_i < a_{i-1}$. This can 
also be written as the continued fraction expansion
\begin{align}
\frac{p}{q} = \cfrac{1}{e_1 - \cfrac{1}{e_2 - \cdots \cfrac{1}{e_k}}}.
\end{align}
We refer to the integer $k$ as the {\em{length}} of the modified Euclidean algorithm. 

Our main theorem expresses the correction term in the index theorem 
in terms of the $e_i$ and the length of the modified Euclidean algorithm: 
\begin{theorem}
\label{mainthm}
Let $(M,g)$ be a compact anti-self-dual orbifold with a single 
orbifold point of type $(q,p)$. 
The index of the anti-self-dual deformation complex on $(M,g)$ is given by 
\begin{align}
Ind(M,g)=
\begin{cases}
 \displaystyle \frac{1}{2}(15\chi_{top}+29\tau_{top})+\sum_{i=1}^{k}4e_i-12k-2  &\text{   when $q \neq p-1$}\\
 \displaystyle \frac{1}{2}(15\chi_{top}+29\tau_{top}) - 4p + 4
&\text{   when $q=p-1$}.
\end{cases}
\end{align}
\end{theorem}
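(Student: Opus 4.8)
\noindent The plan is to apply Kawasaki's orbifold index theorem \cite{Kawasaki} to a rolled-up version of the complex \eqref{thecomplex}, extract an explicit formula for the correction term, and then evaluate the resulting number-theoretic expression using the modified Euclidean algorithm \eqref{mea}. First I would replace \eqref{thecomplex} by the single elliptic operator $\mathcal{K}_g^* \oplus \mathcal{D} \colon \Gamma(S^2_0(T^*M)) \to \Gamma(T^*M) \oplus \Gamma(S^2_0(\Lambda^2_+))$, whose Fredholm index equals $-Ind(M,g)$, and invoke Kawasaki's theorem. This writes $Ind(M,g)$ as a bulk term plus a local contribution at the orbifold point. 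The bulk term is the integral over $M$ of the same Chern--Weil form that on a closed manifold gives $\tfrac12(15\chi + 29\tau)$, hence here $\tfrac12\big(15\chi_{orb} + 29\tau_{orb}\big)$, where $\chi_{orb}$ is the orbifold Euler characteristic and $\tau_{orb} = \tfrac13\int_M p_1$ the orbifold signature. Since $\Gamma_{(q,p)}$ acts freely on $S^3$, the local contribution is a finite Lefschetz-type sum over the $p-1$ nontrivial elements $\gamma^\ell$, each summand built from $\det(I - \gamma^\ell)$ and the characters of the $\ZZ_p$-representations that $T^*M$, $S^2_0(T^*M)$ and $S^2_0(\Lambda^2_+)$ carry at the singular point; these are all determined by the standard action \eqref{qpaction} on $\RR^4 \cong \CC^2$, so the local term becomes an explicit trigonometric sum over $1 \le \ell \le p-1$ in $e^{2\pi i \ell/p}$ and $e^{2\pi i q\ell/p}$.

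Next I would convert from orbifold to topological invariants. The orbifold Gauss--Bonnet theorem gives $\chi_{orb} = \chi_{top} - 1 + \tfrac1p$, and the orbifold signature $\tau_{orb}$ differs from $\tau_{top}$ by the signature defect of the link lens space $S^3 / \Gamma_{(q,p)}$, which is itself a finite cotangent (Dedekind-type) sum. Folding these into the local term of the previous step collapses everything to
\[
Ind(M,g) = \tfrac12\big(15\chi_{top} + 29\tau_{top}\big) + C(q,p),
\]
where $C(q,p)$ is an explicit finite sum depending only on $(q,p)$. It is here that the two cases of the theorem split: when $q = p-1$ the group $\Gamma_{(p-1,p)}$ lies in $\mathrm{SU}(2)$, so it fixes the complex volume form $dz_1 \wedge dz_2$ and therefore acts trivially on all of $\Lambda^2_+ = \RR\w \oplus \Lambda^{2,0} \oplus \Lambda^{0,2}$; the $S^2_0(\Lambda^2_+)$ characters then become trivial, several of the $p-1$ summands degenerate, and $C(p-1,p)$ acquires a different closed form than in the generic case.

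The crux of the argument is the closed-form evaluation of $C(q,p)$. For $q \ne p-1$ I would evaluate it using the reciprocity law for Dedekind sums, organized along the modified Euclidean algorithm \eqref{mea} --- with each of the $k$ steps contributing a term $4e_i - 12$ and with an overall additive constant $-2$ --- to obtain $C(q,p) = \sum_{i=1}^{k} 4e_i - 12k - 2$, the base case $q = 1$ (length one, $e_1 = p$) being checked directly. This can be cross-checked geometrically against the minimal resolution $Y_{(q,p)}$ of $\CC^2/\Gamma_{(q,p)}$ --- the linear plumbing of $k$ two-spheres of self-intersections $-e_1, \dots, -e_k$, with $\chi(Y_{(q,p)}) = k+1$ and $\tau(Y_{(q,p)}) = -k$ --- since $C(q,p)$ is a fixed linear combination of $\chi(Y_{(q,p)})$, $\tau(Y_{(q,p)})$ and $\tfrac13\int_{Y_{(q,p)}} p_1$, the last being computable from the $e_i$ and the link signature defect via Atiyah--Patodi--Singer. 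For $q = p-1$, the algorithm \eqref{mea} has $k = p-1$ with every $e_i = 2$, and carrying the triviality of the $\Lambda^2_+$-action through the degenerate sum gives $C(p-1,p) = -4p + 4$ directly --- which is $2$ larger than the value the first formula would predict.

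I expect this last step to be the main obstacle: establishing the reciprocity/telescoping behaviour of $C(q,p)$ along \eqref{mea} and pinning down the additive constant is a genuine identity among cotangent sums rather than a formal manipulation, and keeping the sign conventions for the signature defect consistent between the orbifold and ALE pictures requires care. By contrast, assembling Kawasaki's formula and computing the three representations is routine, and the exceptional behaviour at $q = p-1$, though it calls for separate bookkeeping, is transparently forced by the inclusion $\Gamma_{(p-1,p)} \subset \mathrm{SU}(2)$.
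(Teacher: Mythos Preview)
Your proposal is correct and follows essentially the same route as the paper: Kawasaki's theorem, explicit computation of the characters of the three representations, conversion from $\chi_{orb},\tau_{orb}$ to $\chi_{top},\tau_{top}$ via the $\eta$-invariant of the link, and evaluation of the resulting correction $C(q,p)$ by Dedekind reciprocity organized along the modified Euclidean algorithm.

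One implementation detail differs from your sketch. The paper does not get a clean ``each step contributes $4e_i-12$'' telescoping directly. Instead it first uses Eisenstein's cotangent identity to rewrite $C(q,p)$ as $48\,s(q,p)$ plus two sawtooth terms, then proves reciprocity formulas for \emph{both} $R^+(q,p)=N(q,p)+N(p,q)$ and $R^-(q,p)=N(-q,p)+N(-p,q)$, and runs the Euclidean algorithm by \emph{alternating} between $R^+$ and $R^-$ (since one step replaces $(q,p)$ by $(-a,q)$, not $(a,q)$). The alternation is what produces the net $4e_i-12$ per step; you correctly flagged this evaluation as the main obstacle. Your geometric cross-check via the Hirzebruch--Jung resolution is not used in the paper's proof of this theorem---the paper runs that implication the other way, using the formula to compute the index on the Calderbank--Singer compactifications---but it is a reasonable consistency check.
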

In some other special cases, the correction term may be written directly in 
terms of $p$. For example, if $q = 1$, and $p > 2$, we have 
\begin{align}
\sum_{i=1}^{k}4e_i-12k-2 = 4p -14.
\end{align}
We note that the cases $q = 1$ and $q = p-1$ 
were proved earlier in \cite{ViaclovskyIndex} using a different method. 

\begin{remark}
\label{srmk}
{\em
While Theorem \ref{mainthm} is stated in the case of a single 
orbifold point for simplicity, if a compact anti-self-dual 
orbifold has several cyclic quotient 
orbifold points, then a similar formula holds, with 
the correction term simply being the sum of the corresponding 
correction terms for each type of orbifold point. }
\end{remark}

\subsection{Asymptotically locally Euclidean spaces}
 Many interesting examples of anti-self-dual metrics are complete and non-compact. 
Given a compact Riemannian orbifold $(\hat{X}, \hat{g})$ with 
non-negative scalar curvature, letting $G_p$ denote the Green's function 
for the conformal Laplacian associated with any point $p$, 
the non-compact space $X = \hat{X} \setminus \{p\}$
with metric $g_p = G_p^{2}\hat{g}$
is a complete scalar-flat orbifold. 
Inverted normal coordinates in the metric $\hat{g}$ in a neighborhood
of the point $p$, give rise to a coordinate system 
in a neighborhood of infinity of $X$, which motivates the 
following:
\begin{definition}
\label{ALEdef}
{\em
 A noncompact Riemannian orbifold $(X^4,g)$ 
is called {\em{asymptotically locally 
Euclidean}} or {\em{ALE}} of order $\tau$ if 
there exists a finite subgroup 
$\Gamma \subset {\rm{SO}}(4)$ 
acting freely on $\RR^4 \setminus \{0\}$, 
and a diffeomorphism 
$\phi : X \setminus K \rightarrow ( \RR^4 \setminus B(0,R)) / \Gamma$ 
where $K$ is a compact subset of $X$, satisfying 
$(\phi_* g)_{ij} = \delta_{ij} + O( r^{-\tau})$ and 
$\partial^{|k|} (\phi_*g)_{ij} = O(r^{-\tau - k })$
for any partial derivative of order $k$, as
$r \rightarrow \infty$, where $r$ is the distance to some fixed basepoint.  
}
\end{definition}

 An {\em{orbifold compactification}} of an ALE space $(X,g)$,
is a choice of a conformal factor $u : X \rightarrow \RR_+$ 
such that $u = O(r^{-2})$ as $r \rightarrow \infty$. 
The space $(X, u^2 g)$ then compactifies to a
$C^{1,\alpha}$ orbifold. If $(X,g)$ is anti-self-dual, then there
moreover exists a $C^{\infty}$-orbifold conformal compactification 
$(\hat{X}, \hat{g})$ \cite[Proposition 12]{CLW}. 

\begin{remark}{\em
It is crucial to note that if $(X,g)$ is an anti-self-dual 
ALE space with a $\Gamma$-action at 
infinity, then the conformal compactification $(\hat{X}, \hat{g})$ with the 
anti-self-dual orientation has a  
$\tilde{\Gamma}$-action at the orbifold point where $\tilde{\Gamma}$ is 
orientation-reversing conjugate to $\Gamma$. In the case of a cyclic 
group, if the action at infinity of the anti-self-dual ALE space $(X,g)$ is of type 
$(q,p)$, then the action at the orbifold point of the compactification 
$(\hat{X}, \hat{g})$ with the anti-self-dual orientation is of type $(p-q,p)$.
}
\end{remark}

 Many examples of anti-self-dual ALE spaces with nontrivial group 
at infinity have been discovered. The first non-trivial example was
due to Eguchi and Hanson, who found
a Ricci-flat anti-self-dual metric on $\mathcal{O}(-2)$ which is ALE with 
group $\ZZ / 2 \ZZ$ at infinity \cite{EguchiHanson}. 
Gibbons-Hawking then wrote down an metric ansatz depending on the choice of 
$n$ monopole points in $\RR^3$, giving an anti-self-dual ALE hyperk\"ahler metric 
with cyclic action at infinity contained in ${\rm{SU(2)}}$, which 
are called multi-Eguchi-Hanson metrics \cite{GibbonsHawking, Hitchin2}. 

Using the Joyce construction from \cite{Joyce1995}, 
Calderbank and Singer produced many examples of toric 
ALE anti-self-dual metrics, which are moreover 
scalar-flat K\"ahler, and have cyclic groups at 
infinity contained in ${\rm{U}}(2)$ \cite{CalderbankSinger}.
For a type $(q,p)$-action, the space $X$ is the minimal Hirzebruch-Jung 
resolution 
of $\CC^2/ \Gamma_{(q,p)}$, with exceptional divisor 
given by the union of $2$-spheres $S_1 \cup \cdots \cup S_k$, with intersection
matrix 
\begin{align}
\label{intersect}
(S_i \cdot S_j) = 
\left(
\begin{matrix}
-e_1  & 1 & 0 & \cdots & 0 \\
1    & - e_2 & 1 & \cdots & 0 \\
0 & 1 & - e_3 & \cdots & 0 \\
\vdots & \vdots & \vdots &  & \vdots \\
0 & 0 & 0 & \cdots & - e_k \\
\end{matrix}
\right),
\end{align}
where the $e_i$ and $k$ are defined above in \eqref{mea}
with $e_i \geq 2$. 
  The K\"ahler scalar-flat metric on $X$ is then written down 
explicitly using the Joyce ansatz from \cite{Joyce1995}. 
We do not require the details of the construction here, 
but only note the following: 
For $1 < q$ the identity component of the isometry 
group of these metrics is a real $2$-torus,
and for $q = 1$, it is ${\rm{U}}(2)$. 

When $q = p - 1$, these metrics 
are the {\em{toric}} Gibbons-Hawking multi-Eguchi-Hanson metrics 
(when all monopole points are on a common line). 
In this case $k = p-1$ and $e_i = 2$ for $1 \leq i \leq k$. 
The moduli space of toric metrics in this case is of dimension 
$p-2$. But the moduli space of all multi-Eguchi-Hanson 
metrics is of dimension $3(p-2)$. So it is well-known that 
these metrics admit non-toric anti-self-dual deformations.
When $q = 1$, these metrics agree with the 
LeBrun negative mass metrics on $\mathcal{O}(-p)$ discovered 
in \cite{LeBrunnegative}. In this case $k =1$ and $e_1 = p$. 
For $p > 2$, it was recently shown in \cite{HondaOn, ViaclovskyIndex} 
that these spaces also admit non-toric anti-self-dual 
deformations. 
Theorem \ref{CScor} will give a vast 
generalization of this phenomenon to the general case $1 < q < p-1$.
The proof of Theorem \ref{CScor} relies on the following explicit formula for the 
index of the complex \eqref{thecomplex} on the 
conformal compactification of these metrics:
\begin{theorem}
\label{CSthm}
Let $(\hat{X}, \hat{g})$ be the orbifold conformal compactification 
of a Calderbank-Singer space $(X,g)$ with a $(q,p)$-action at infinity.
Then the index of of anti-self-dual deformation complex is given by 
\begin{align}
Ind(\hat{X}, \hat{g}) =
\begin{cases}
\displaystyle 5k+5-\sum_{i=1}^{k}4e_i &\text{   when $q \neq 1$}\\
\displaystyle-4p+12 &\text{   when $q=1$},
\end{cases}
\end{align}
where the integers $k$ and $e_i$, $1 \leq i \leq k$, 
are the integers occuring in  
the modified Euclidean algorithm defined in \eqref{mea}. 
\end{theorem}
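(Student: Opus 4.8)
The plan is to deduce Theorem~\ref{CSthm} from Theorem~\ref{mainthm} by pinning down the topology of the orbifold compactification and then invoking the Riemenschneider point-diagram duality for Hirzebruch--Jung continued fractions. Since the Calderbank--Singer metric on $X$ is scalar-flat K\"ahler, it is anti-self-dual for the complex orientation, so by \cite[Proposition~12]{CLW} the $C^\infty$ orbifold conformal compactification $(\hat X,\hat g)$ is a compact anti-self-dual orbifold; by the remark relating the group at infinity of an ALE space to the group at its compactified orbifold point, the unique orbifold point of $\hat X$, taken with the anti-self-dual (here $=$ complex) orientation, is of type $(p-q,p)$. Hence Theorem~\ref{mainthm} applies, with the modified Euclidean algorithm \eqref{mea} run for the pair $(p-q,p)$; I will write $e'_1,\dots,e'_{k'}$ and $k'$ for the integers it produces --- the continued-fraction data of $p/(p-q)$ --- keeping $e_i$ and $k$ for the data of $p/q$ as in the statement.

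First I would compute the topological invariants of $\hat X$. The resolution $X$ deformation retracts onto the exceptional divisor $S_1\cup\cdots\cup S_k$, which by \eqref{intersect} is a chain of $k$ rational curves meeting transversally, so $\chi(X)=k+1$; since $\hat X$ is obtained from $X$ by attaching the cone on the link $S^3/\Gamma_{(q,p)}$, a rational homology sphere with $\chi=0$, one gets $\chi_{top}(\hat X)=\chi(X)+1=k+2$. A Mayer--Vietoris argument, again using that the link is a rational homology sphere, gives $H_2(\hat X;\mathbb{Q})\cong\mathbb{Q}^k$ spanned by $[S_1],\dots,[S_k]$, with intersection form the matrix \eqref{intersect}; this form is negative definite (being the intersection form of the exceptional set of a minimal resolution), so $\tau_{top}(\hat X)=-k$ with respect to the complex orientation --- the same orientation entering Theorem~\ref{mainthm}.

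Next I would substitute. If $q=1$ the orbifold point is of type $(p-1,p)$, so the second case of Theorem~\ref{mainthm} applies; here $k=1$ and $e_1=p$, hence $\chi_{top}(\hat X)=3$, $\tau_{top}(\hat X)=-1$, and the formula collapses to $\tfrac12(15\cdot3+29\cdot(-1))-4p+4=-4p+12$. If $q\neq1$ then $p-q\neq p-1$, so the first case of Theorem~\ref{mainthm} gives
\[
Ind(\hat X,\hat g)=\tfrac12\bigl(15(k+2)+29(-k)\bigr)+\sum_{i=1}^{k'}4e'_i-12k'-2 .
\]
To eliminate the dual data, set $N=\sum_{i=1}^{k}(e_i-1)$. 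Counting the dots of Riemenschneider's point diagram for $p/q$ by rows versus by columns gives $\sum_{i=1}^{k'}(e'_i-1)=N$, and since consecutive rows of that diagram share exactly one column one gets $k'=N-k+1$, i.e.\ $N=k+k'-1$. Then $\sum_i e'_i=N+k'=k+2k'-1$; substituting and simplifying yields $Ind(\hat X,\hat g)=-3k-4k'+9$, and since $\sum_i e_i=N+k=2k+k'-1$ one also has $5k+5-\sum_i4e_i=-3k-4k'+9$, which is the asserted value.

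The algebra of the last step and the standard topology of surface resolutions are routine. The only genuine input beyond Theorem~\ref{mainthm} is the identity $N=k+k'-1$ relating the two Hirzebruch--Jung strings, which is classical Riemenschneider duality; and the main subtlety to watch --- more a bookkeeping point than a real obstacle --- is the orientation convention, namely that scalar-flat K\"ahler forces the complex orientation to be the anti-self-dual one, so that the orbifold point is genuinely of type $(p-q,p)$ and the signature $\tau_{top}(\hat X)=-k$ is computed with respect to the orientation that Theorem~\ref{mainthm} uses.
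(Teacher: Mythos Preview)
Your proof is correct, and the topological input ($\chi_{top}(\hat X)=k+2$, $\tau_{top}(\hat X)=-k$, orbifold point of type $(p-q,p)$ in the anti-self-dual orientation) as well as the final algebra all check out.

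The route you take to relate the $(q,p)$ and $(p-q,p)$ continued-fraction data is, however, genuinely different from the paper's. You invoke Riemenschneider's point-diagram duality to obtain the identities $\sum_i(e_i-1)=\sum_j(e'_j-1)$ and $k'=N-k+1$, and then eliminate $k'$ and the $e'_j$ by pure combinatorics. The paper instead proves the needed relation \emph{analytically}: it applies Theorem~\ref{mainthm} (via Remark~\ref{srmk}) to the $(q,p)$-football $S^4_{(q,p)}=S^4/\Gamma_{(q,p)}$, whose two singular points are of types $(q,p)$ and $(p-q,p)$, and uses the explicit value $Ind(S^4_{(q,p)},g_S)=3$ for the round metric to deduce $N(q,p)+N(p-q,p)=-12$; this is precisely the identity that, after substitution, yields $Ind(\hat X,\hat g)=5k+5-\sum_i 4e_i$. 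Your argument is more self-contained combinatorially and avoids having to know the index of the football, while the paper's argument stays entirely inside the index-theoretic framework already developed and in effect \emph{rederives} the Riemenschneider identity as a byproduct. Either way one lands on $Ind(\hat X,\hat g)=-3k-4k'+9$, which both computations show equals $5k+5-\sum_i 4e_i$.
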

We note that if $q = p - 1$, the index simplifies to $-3 p + 8$. 
A consequence of the above is that the Calderbank-Singer spaces 
admit large families of non-toric anti-self-dual deformations, 
thereby yielding many new examples:

\begin{theorem}
\label{CScor} 
Let $(X,g)$ be a Calderbank-Singer space with a $(q,p)$-action at infinity,
and $(\hat{X}, \hat{g})$ be the orbifold conformal compactification.
Let $\mathcal{M}_{\hat{g}}$ denote the moduli space of anti-self-dual 
conformal structures near $(\hat{X}, \hat{g})$. Then,
\begin{itemize}
\item
If $q = 1$ and $p =2$, then $\hat{g}$ is rigid. 
\item
If $q = 1$ and $p = 3$, then $\mathcal{M}_{\hat{g}}$ is locally of dimension $1$.
\item
If $q = 1$ and $p > 3$, then $\mathcal{M}_{\hat{g}}$ is locally of dimension $4p -12$.
\item
If $q = p -1$, then  $\mathcal{M}_{\hat{g}}$ is locally of dimension $3p -7$.
\item
If $1 < q < p - 1$, then  $\mathcal{M}_{\hat{g}}$ is locally of dimension at least 
\begin{align}
\label{csest}
\dim(H^1) - 2 = -5k-5+\sum_{i=1}^{k}4e_i. 
\end{align}
\end{itemize}
Consequently, if $p  > 2$, these spaces admit non-toric anti-self-dual deformations.
\end{theorem}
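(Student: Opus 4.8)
The plan is to combine the index computation of Theorem~\ref{CSthm} with the deformation theory recalled in the introduction: since $\mathcal{M}_{\hat{g}}$ is locally isomorphic to $\Psi^{-1}(0)/H^0$, it suffices to determine $\dim H^0(\hat{X},\hat{g})$, to prove that the obstruction space $H^2(\hat{X},\hat{g})$ vanishes (so that $\Psi\equiv 0$ and $\mathcal{M}_{\hat{g}}$ is locally $H^1/H^0$), and then to analyze the action of $H^0$ on $H^1$. I would first compute $H^0$, the conformal Killing fields. Because $\hat{g}=u^{2}g$ with $u=O(r^{-2})$, every Killing field of the Calderbank--Singer metric $g$ extends to a conformal vector field on $\hat{X}$ fixing the orbifold point, so $\dim H^0\geq\dim\mathrm{Isom}_0(X,g)$, which equals $4$ when $q=1$ and $2$ when $1<q$. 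For $1<q$ I would show this is an equality: an additional conformal field would have to preserve both the singular point and the exceptional configuration $S_1\cup\cdots\cup S_k$, forcing it to be toric and then to vanish; and for $q=1$ one checks that $\hat{X}$ admits no conformal fields beyond the $\mathrm{U}(2)$ already present.

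The main step is the vanishing $H^2(\hat{X},\hat{g})=0$ for $q=1$ and for $1<q<p-1$. I would realize $H^2$ as the space of $L^2$ solutions of $\mathcal{D}^{*}v=0$ on the ALE space $(X,g)$ subject to the orbifold decay condition at the compactification point, which reduces the problem to a weighted-space analysis on a scalar-flat K\"ahler ALE surface in the spirit of \cite{ViaclovskyIndex,HondaOn}; the K\"ahler structure then lets one identify such solutions with holomorphic data on the underlying rational surface, which a Riemann--Roch or vanishing-theorem argument eliminates in the relevant range. Granting this, Theorem~\ref{CSthm} yields $\dim H^1=\dim H^0-Ind(\hat{X},\hat{g})$: this equals $4p-8$ when $q=1$ and $-5k-3+\sum_{i=1}^{k}4e_i$ when $1<q<p-1$.

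It then remains to compute $\dim(H^1/H^0)$ in each case. When $1<q<p-1$, the torus $H^0\cong T^2$ fixes the toric subspace of $H^1$ and acts nontrivially on a complement, so $\dim\mathcal{M}_{\hat{g}}\geq\dim H^1-2=-5k-5+\sum_{i=1}^{k}4e_i$. When $q=1$ and $p\geq 4$, the $\mathrm{U}(2)$-action on $H^1$ has trivial generic stabilizer, giving exactly $\dim H^1-4=4p-12$; when $p=3$ one finds instead $H^1\cong\CC^2$ with the standard $\mathrm{U}(2)$-action, whose generic orbits are $3$-dimensional, so $\dim\mathcal{M}_{\hat{g}}=1$; and when $p=2$ one gets $\dim H^1=0$, i.e.\ $\hat{g}$ is rigid. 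The case $q=p-1$ I would treat by hand, combining the index with the explicitly known $(3p-7)$-parameter family of multi-Eguchi--Hanson metrics (and checking that the possibly nonzero obstruction space does not in fact obstruct) to pin the dimension at $3p-7$. Finally, the last assertion follows by comparison: the space of \emph{toric} anti-self-dual conformal structures near $\hat{g}$ has dimension $p-2$ in the multi-Eguchi--Hanson case and, for general $q$, a dimension linear in $k$ governed by Joyce's classification, and in every case with $p>2$ the moduli space computed above is strictly larger, so non-toric deformations must exist. I expect the vanishing $H^2=0$, together with correctly identifying the borderline cases where it fails, to be the principal difficulty.
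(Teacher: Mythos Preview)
Your overall architecture is right --- compute $\dim H^0$, show $H^2=0$, read off $\dim H^1$ from Theorem~\ref{CSthm}, and then pass to $H^1/H^0$ and compare with the toric moduli --- and your numerics for $\dim H^1$ in each case are correct. Where your proposal diverges from the paper is in how the hard inputs are obtained.

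First, the vanishing $H^2(\hat X,\hat g)=0$: you propose a weighted $L^2$ analysis on the ALE model together with a K\"ahler/Riemann--Roch argument, and flag it as the principal difficulty. The paper does none of that; it simply quotes the LeBrun--Maskit result \cite[Theorem~4.2]{LeBrunMaskit}, which already gives $H^2=0$ for these compactified scalar-flat K\"ahler metrics. Your route might be made to work, but it is far more laborious and, as written, only a sketch.

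Second, for the exact dimensions when $q=1$ (the cases $p=3$ and $p>3$) the paper does not analyze the $\mathrm U(2)$-action on $H^1$ directly; it cites Honda \cite{HondaOn} (and \cite{ViaclovskyIndex}) for the exact moduli dimensions $1$ and $4p-12$. Your proposed orbit analysis (``generic stabilizer trivial'', ``$H^1\cong\CC^2$ with standard action'') is plausible but is an additional, nontrivial computation that the paper avoids by citation. Similarly, for $q=p-1$ the paper just invokes the known description of the multi-Eguchi--Hanson moduli; there is no obstruction issue to ``check''.

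Third, your comparison with the toric moduli is not sharp enough. You say the toric moduli has dimension ``linear in $k$ governed by Joyce's classification''; to conclude that non-toric deformations exist for every $p>2$ you need the exact value. The paper uses Wright \cite[Corollary~1.1]{Wright}, which gives the toric moduli dimension as exactly $k-1$; then one checks $\dim(H^1)-2\ge 3k-1>k-1$ in the case $1<q<p-1$, and the other cases are handled by the cited results. Without the precise $k-1$, your inequality is not established.

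In short, the skeleton is correct, but the three places you identify as requiring work are exactly where the paper appeals to LeBrun--Maskit, Honda, and Wright; absent those citations, your proposal leaves genuine gaps.
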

\begin{remark}{\em
Theorem \ref{CSthm} 
could be equivalently stated in terms of the ALE metrics rather than the 
compactified metrics. However, the definition of the index on an ALE
space involves defining certain weighted spaces; 
see \cite[Proposition 3.1]{ViaclovskyIndex} for the precise formula which relates
the index on the ALE space to the index on the compactification; for our 
purposes here, we only require the statement on the compactification.
Similarly, Theorem~\ref{CScor} could be equivalently stated in terms
of anti-self-dual ALE deformations of the ALE model. 
}
\end{remark} 
By a result of LeBrun-Maskit, $H^2(\hat{X}, \hat{g}) = 0$ for these metrics, 
so the actual moduli space is locally isomorphic to $H^1/ H^0$,  
\cite[Theorem 4.2]{LeBrunMaskit}.
Therefore the moduli space
could be of dimension $\dim(H^1)$, $\dim(H^1) - 1$, or $\dim(H^1) - 2$.
This action in the toric multi-Eguchi-Hanson case $q = p -1$ is well-known; 
in this case for $p \geq 3$, $\dim(H^1) = 3p - 6$, and 
the dimension of the moduli space is equal to $\dim(H^1) -1 = 3p - 7$. 
In the LeBrun negative mass case $q = 1$, this 
action was recently completely determined by Nobuhiro Honda using
arguments from twistor theory \cite{HondaOn}. 
For $ 1 < q < p-1$, further arguments are needed to determine this action
explicitly; this is an interesting problem.

\subsection{Weighted projective spaces} 
We first recall the definition of weighted projective spaces in real dimension four:
\begin{definition} {\em{ For relatively prime integers $1 \leq r \leq q \leq p$, 
the {\em{weighted projective space}} $\mathbb{CP}^2_{(r,q,p)}$ is $S^{5}/\mathbb{C}^*$, 
where $\mathbb{C}^*$ acts by
\begin{align}
(z_0,z_1,z_2)\mapsto (e^{ir\theta}z_0,e^{iq\theta}z_1 ,e^{ip\theta}z_2),
\end{align}
for $0\leq \theta <2\pi$. 
}}
\end{definition}

The space $\mathbb{CP}^2_{(r,q,p)}$ has the structure of a compact complex orbifold.
In \cite{Bryant}, 
Bryant proved that every weighted projective space admits a Bochner-K\"ahler metric.  
Subsequently, David and Gauduchon gave a simple and direct construction of 
these metrics \cite{DavidGauduchon}. Using an argument due to Apostolov, they
also showed that this metric is the unique Bocher-K\"ahler metric on a given weighted 
projective space \cite[Appendix D]{DavidGauduchon},
and thus we will call this metric the {\em{canonical}} Bochner-K\"ahler metric.  
In complex dimension two, the Bochner tensor is the same as the 
anti-self-dual part of the Weyl 
tensor so Bochner-K\"ahler metrics are the same as self-dual K\"ahler metrics.  

The work of Derdzinski \cite{Derdzinski} showed that a 
self-dual K\"ahler metric $g$ is conformal to a 
self-dual Hermitian Einstein metric on 
$M^*:=\{p\in M : R(p)\neq 0\}$, given by $\tilde{g}=R^{-2}g$, where
$R$ is the scalar curvature.  This conformal metric is not K\"ahler unless 
$R$ is constant.  Conversely, Apostolov and Gauduchon \cite{ApostolovGauduchon} showed 
that any self-dual Hermitian Einstein 
metric that is not conformally flat is of the form $\tilde{g}$ for a unique 
self-dual K\"ahler metric $g$ with $R\neq 0$.  

For a weighted projective space $\mathbb{CP}^2_{(r,q,p)}$, there are the following 3 cases: 
\begin{itemize}
\item
When $p < r+q$ the canonical Bochner-K\"ahler metric has $R>0$ everywhere, so it is conformal to a Hermitian Einstein metric with positive Einstein constant.  
\item
When $p = r+q$ the canonical Bochner-K\"ahler metric has $R>0$ except at one point, so it is conformal to a complete Hermitian Einstein metric with vanishing Einstein constant outside this point.  

\item When $p > r+q$ the canonical Bochner-K\"ahler metric has $R$ vanishing along a hypersurface and the complement is composed of two open sets on which the metric is conformal to a Hermitian Einstein metric with negative Einstein constant.  
\end{itemize}
For $x \in \RR$, $\lfloor x \rfloor$ denotes the integer part of $x$,
and $\{x\} = x -\lfloor x \rfloor $ denotes the fractional part of $x$. 
We also define the integer $\epsilon$ by 
\begin{align}
\epsilon = 
\begin{cases}
0  & \text{if } p \not\equiv q \text{ mod } r \mbox{ and } p \not\equiv r \text{ mod } q\\
1 & \text{if } p \equiv q \text{ mod } r \mbox{ or } p \equiv r \text{ mod } q,
\mbox{ but not both}, \\
2 &  \text{if } p \equiv q \text{ mod } r \mbox{ and } p \equiv r \text{ mod } q. 
\\
\end{cases}
\end{align}
Our main result for the index on weighted projective spaces 
is the following, with the answer depending upon certain number-theoretic 
properties of the triple $(r,q,p)$: 
\begin{theorem}
\label{introthm}
Let $g$ be the canonical Bochner-K\"ahler metric with reversed 
orientation on $\overline{\mathbb{CP}}^2_{(r,q,p)}$.
Assume that $1<r<q<p$. If $r+q\geq p$ then 
\begin{align}
Ind(\overline{\mathbb{CP}}^2_{(r,q,p)},g)=2.
\end{align}
If $r+q<p$, then
\begin{align}
Ind(\overline{\mathbb{CP}}^2_{(r,q,p)},g)=
\begin{cases}
2 +2\epsilon -4\lfloor \frac{p}{qr} \rfloor &\text{   when $\{ \frac{p}{qr}\}<\{\frac{q^{-1;r}p}{r}\}$}\\
-2 +2\epsilon -4\lfloor \frac{p}{qr} \rfloor &\text{   when $\{ \frac{p}{qr}\}>\{\frac{q^{-1;r}p}{r}\}$}.
\end{cases}
\end{align}
\end{theorem}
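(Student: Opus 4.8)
The plan is to realize $(\overline{\CP}^2_{(r,q,p)},g)$ as a compact anti-self-dual orbifold with three cyclic quotient singularities, apply Theorem~\ref{mainthm} in the several-point form of Remark~\ref{srmk}, and then carry out the resulting number-theoretic bookkeeping. In complex dimension two the Bochner tensor is the anti-self-dual Weyl curvature, so the canonical Bochner--K\"ahler metric is self-dual for the complex orientation and anti-self-dual for the reversed one; thus $(\overline{\CP}^2_{(r,q,p)},g)$ is a compact anti-self-dual Riemannian orbifold. Assuming $r,q,p$ pairwise relatively prime, so that $\CP^2_{(r,q,p)}$ has isolated singularities, these singularities occur precisely at the three coordinate points, of orders $r,q,p$; reading off the weights of the residual $\mu_r,\mu_q,\mu_p$ actions on the normal slices identifies their types with the complex orientation as $(q^{-1}p \bmod r,\,r)$, $(r^{-1}p \bmod q,\,q)$ and $(r^{-1}q \bmod p,\,p)$. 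Reversing the orientation replaces each type $(b,m)$ by its orientation-reversing conjugate $(m-b,m)$, as in Remark~\ref{actrem}. Finally, the underlying space of $\CP^2_{(r,q,p)}$ is a rational homology $\CP^2$, so $\chi_{top}=3$ and, with the reversed orientation, $\tau_{top}=-1$; hence the topological contribution in Theorem~\ref{mainthm} equals $\tfrac12\bigl(15\cdot3+29\cdot(-1)\bigr)=8$.

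Next I would isolate the special orbifold points. A point of type $(m-b,m)$ lies in the degenerate case $q=p-1$ of Theorem~\ref{mainthm} exactly when $b=1$: at $[0{:}0{:}1]$ this would force $q\equiv r \bmod p$, impossible since $r<q<p$, while at $[1{:}0{:}0]$ and $[0{:}1{:}0]$ it amounts to $p \equiv q \bmod r$ and $p \equiv r \bmod q$ respectively. Thus $\epsilon$ is precisely the number of singular points of $\overline{\CP}^2_{(r,q,p)}$ of the special type, each contributing $-4m+4$, while every remaining point contributes $\sum_i 4e_i-12k-2$ in terms of the modified Euclidean algorithm of its type. By Theorem~\ref{mainthm} and Remark~\ref{srmk} we get $Ind(\overline{\CP}^2_{(r,q,p)},g)=8+C_0+C_1+C_2$ with $C_j$ the correction of the $j$-th point, and it remains to evaluate this sum. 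Writing each generic contribution as $4\bigl(\sum_i e_i-3k\bigr)-2$ and invoking the Riemenschneider duality between the Hirzebruch--Jung continued fraction of a type and that of its orientation reversal, one expresses $C_0+C_1+C_2$ through the three continued fractions attached to $(r,q,p)$. Since the three parameters above are inverses of $r,q,p$ modulo one another, those continued fractions are produced by a single iterated Euclidean process on the triple $(r,q,p)$, and a reciprocity computation collapses the sum: in the regime $r+q\geq p$ the process halts immediately and $C_0+C_1+C_2=-6$; for $r+q<p$ the integer $\lfloor p/(qr)\rfloor$ records the number of iterations, and the sign is governed by comparing $\{p/(qr)\}$ with $\{q^{-1;r}p/r\}$, the latter being exactly $b_0/r$ for the complex type $(b_0,r)$ at $[1{:}0{:}0]$.

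The work in the first two steps is routine once Theorem~\ref{mainthm} is available. The main obstacle is the final reduction, namely the identity that turns the sum of three Hirzebruch--Jung correction terms, with the $\epsilon$-dependent special cases folded in, into $2\epsilon-4\lfloor p/(qr)\rfloor\pm2$ (respectively $2$). Tracking the three intertwined continued fractions through the reciprocity, and pinning down exactly where the thresholds $r+q=p$ and $\{p/(qr)\}=\{q^{-1;r}p/r\}$ occur, is the crux of the argument.
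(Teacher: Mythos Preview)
Your setup is correct and matches the paper: the three orbifold points, their types after orientation reversal, the topological contribution $8$, and the identification of $\epsilon$ with the number of ``exceptional'' points all agree with Section~\ref{wpssec}. The gap is exactly where you say it is: the evaluation of $C_0+C_1+C_2$.

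The paper does \emph{not} attempt to sum three continued-fraction expressions $\sum_i 4e_i-12k-2$ directly, and your sketch via Riemenschneider duality and an ``iterated Euclidean process on the triple'' is not how the argument goes (nor is it clear that it can be made to work cleanly). Instead the paper reverts to the intermediate representation of each correction term as a Dedekind sum plus sawtooth terms (Theorem~\ref{N-nonexceptional}):
\[
N(\alpha,\beta)=C_{(\alpha,\beta)}+48\,s(\alpha,\beta)-4\Big(\Big(\tfrac{\alpha^{-1;\beta}}{\beta}\Big)\Big)-4\Big(\Big(\tfrac{\alpha}{\beta}\Big)\Big).
\]
The three Dedekind sums are then collapsed in one stroke by \emph{Rademacher's triple reciprocity}
\[
s(q^{-1;r}p,r)+s(p^{-1;q}r,q)+s(r^{-1;p}q,p)=-\tfrac14+\tfrac{1}{12}\Big(\tfrac{r}{pq}+\tfrac{q}{pr}+\tfrac{p}{qr}\Big),
\]
while the six sawtooth terms pair up via the elementary reciprocity relations of Theorems~\ref{2} and \ref{3}. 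The non-integer pieces cancel exactly, and the case split on $\{p/(qr)\}$ versus $\{q^{-1;r}p/r\}$ arises precisely from the dichotomy in Proposition~\ref{saw} applied to $\big(\big(\tfrac{q^{-1;r}p}{r}\big)\big)+\big(\big(\tfrac{r^{-1;q}p}{q}\big)\big)$. The missing idea in your proposal is Rademacher's triple reciprocity; once you use it, the continued fractions never need to be combined at all.
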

We note that in the case $\{ \frac{p}{qr}\}<\{\frac{q^{-1;r}p}{r}\}$,
the integer $\epsilon$ can  only be $0$ or $1$; the integer $2$ 
does not actually occur in this case. Thus there are exactly $5$ 
cases which do in fact all occur, see Section \ref{wpssec}. 

Theorem \ref{introthm} implies the following result regarding 
the moduli space of anti-self-dual metrics on $\overline{\mathbb{CP}}^2_{(r,q,p)}$:
\begin{theorem}
\label{wpsthm} 
Let $g$ be the canonical Bochner-K\"ahler metric with reversed 
orientation on $\overline{\mathbb{CP}}^2_{(r,q,p)}$.
Assume that $1<r<q<p$. Then, 
\begin{itemize}
\item
If $p \leq q + r$ then $[g]$ is isolated as an anti-self-dual conformal
class.
\item
If $p > q + r$ then the moduli space of anti-self-dual
orbifold conformal classes near $g$, $\mathcal{M}_g$, 
is of dimension at least
\begin{align}
\dim( \mathcal{M}_g) \geq 
\begin{cases}
4\lfloor \frac{p}{qr} \rfloor- 2 -2\epsilon  &\text{   when $\{ \frac{p}{qr}\}<\{\frac{q^{-1;r}p}{r}\}$}\\
4\lfloor \frac{p}{qr} \rfloor +2 -2\epsilon &\text{   when $\{ \frac{p}{qr}\}>\{\frac{q^{-1;r}p}{r}\}$}.
\end{cases}
\end{align}
\end{itemize}
\end{theorem}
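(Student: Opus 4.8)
The plan is to deduce Theorem \ref{wpsthm} from the index computation in Theorem \ref{introthm} together with the general deformation theory recalled in the introduction, following exactly the same scheme as the proof of Theorem \ref{CScor}. The first step is to identify $H^0(\overline{\mathbb{CP}}^2_{(r,q,p)},g)$, the space of conformal Killing fields. Since $g$ is (conformal to) the canonical Bochner-K\"ahler metric and $1<r<q<p$ are distinct, the identity component of the isometry group is a real $2$-torus acting on the weighted projective space with isolated fixed points; there are no further conformal vector fields because a self-dual K\"ahler orbifold metric which is not conformally flat cannot have extra conformal symmetry (one can invoke the Derdzinski/Apostolov-Gauduchon correspondence to reduce to the Einstein-Hermitian picture). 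Hence $\dim H^0 = 2$ in all cases under consideration.

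The second step is to pin down $H^2$. Here I would appeal to a LeBrun-Maskit type vanishing theorem: for these self-dual (equivalently, anti-self-dual after orientation reversal) orbifold metrics arising from Bochner-K\"ahler geometry, $H^2(\overline{\mathbb{CP}}^2_{(r,q,p)},g) = 0$, exactly as quoted for the Calderbank-Singer spaces via \cite[Theorem 4.2]{LeBrunMaskit}. Given $H^2 = 0$, the analytic index becomes $Ind = \dim H^0 - \dim H^1 = 2 - \dim H^1$, so Theorem \ref{introthm} immediately yields $\dim H^1$: when $r+q \geq p$ we get $\dim H^1 = 0$, and when $r+q<p$ we get $\dim H^1 = 4\lfloor \tfrac{p}{qr}\rfloor - 2\epsilon$ in the first sub-case and $\dim H^1 = 4\lfloor \tfrac{p}{qr}\rfloor + 4 - 2\epsilon$ in the second. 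Since $H^2=0$, the Kuranishi map $\Psi : H^1 \to H^2$ is identically zero, so the moduli space $\mathcal{M}_g$ is locally isomorphic to $H^1/H^0$. When $\dim H^1 = 0$ this forces $\mathcal{M}_g$ to be a point, giving the rigidity statement $p \leq q+r$. When $\dim H^1 > 0$, the quotient of $H^1$ by the at-most-$2$-dimensional group $H^0$ has dimension at least $\dim H^1 - 2$, which is precisely the asserted lower bound in each sub-case.

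The third step is bookkeeping: verify that $\dim H^1 - 2$ matches the two displayed expressions. In the sub-case $\{\tfrac{p}{qr}\} < \{\tfrac{q^{-1;r}p}{r}\}$ we have $\dim H^1 - 2 = 4\lfloor\tfrac{p}{qr}\rfloor - 2\epsilon - 2$, and in the sub-case $\{\tfrac{p}{qr}\} > \{\tfrac{q^{-1;r}p}{r}\}$ we have $\dim H^1 - 2 = 4\lfloor\tfrac{p}{qr}\rfloor - 2\epsilon + 2$; these are exactly the right-hand sides in the theorem. One should also note, as remarked after Theorem \ref{introthm}, that $\epsilon \in \{0,1\}$ in the first sub-case and $\lfloor \tfrac{p}{qr}\rfloor \geq 1$ whenever $r+q < p$ (since $p > qr$ is forced by $1<r<q<p$ with $r+q<p$ — more precisely one checks $qr \geq 2q > q + r$ using $r \geq 2$), so the bound is genuinely positive and the moduli space is genuinely nontrivial, proving the final assertion that infinitely many of these orbifolds admit nontrivial self-dual deformations.

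I expect the main obstacle to be the justification of the two inputs that are \emph{not} the index formula: the identification $\dim H^0 = 2$ and especially the vanishing $H^2 = 0$. The former requires ruling out nonisometric conformal Killing fields, for which the cleanest route is the Derdzinski-type observation that away from the zero set of $R$ the metric is conformal to an Einstein-Hermitian metric whose conformal automorphisms are isometries, combined with a unique-continuation/analyticity argument across the (measure-zero) zero set of $R$. The latter, $H^2 = 0$, is the genuinely hard analytic point: one wants an orbifold analogue of LeBrun-Maskit, which typically follows from a Weitzenb\"ock/integration-by-parts argument exploiting the self-dual K\"ahler structure (or positivity of a relevant curvature term coming from the Bochner-flat condition) to show the relevant second-order operator has no $L^2$ kernel; care is needed at the orbifold points, where one must check that the integration by parts produces no boundary contributions, i.e. that harmonic representatives are sufficiently regular in the orbifold sense. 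Everything else is formal consequence of the deformation theory already summarized in Section \ref{intro}.
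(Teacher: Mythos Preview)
Your overall scheme---compute $\dim H^0$, establish $H^2=0$, read off $\dim H^1$ from Theorem~\ref{introthm}, and bound $\dim\mathcal{M}_g \geq \dim H^1 - 2$---is exactly the paper's structure. The substantive divergence is in how you propose to obtain $H^2=0$.

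You invoke a ``LeBrun--Maskit type vanishing theorem,'' but that result hinges on the conformal class having positive Yamabe type (equivalently, containing a metric of positive scalar curvature). For the canonical Bochner--K\"ahler metric on $\mathbb{CP}^2_{(r,q,p)}$ with $p>q+r$, the scalar curvature \emph{changes sign along a hypersurface}, so it is not clear that LeBrun--Maskit applies, and a generic Weitzenb\"ock/integration-by-parts argument will run into the same sign problem. The paper handles this by a genuinely different mechanism (Proposition~\ref{sdk} and Corollary~\ref{h2wps}): one passes to the conformal Einstein metric $\tilde g = R^{-2}g$ on each component of $\{R\neq 0\}$, uses Itoh's factorization $\mathcal{D}_{\tilde g}\mathcal{D}^*_{\tilde g} = \tfrac{1}{24}(3\nabla^*\nabla + 2R_{\tilde g})(2\nabla^*\nabla + R_{\tilde g})$ for Einstein metrics, and then combines a decay estimate ($|Z|_{\tilde g}=R^2|Z|_g\to 0$ at infinity on the complete end) with unique continuation across the zero locus of $R$. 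This is the analytic heart of the proof and is not a consequence of LeBrun--Maskit.

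There is also a slip in your final paragraph: you claim $r+q<p$ forces $p>qr$, but your inequality $qr\geq 2q>q+r$ only shows $qr>q+r$, not $qr<p$. Indeed $(r,q,p)=(3,7,11)$ has $r+q<p$ yet $\lfloor p/(qr)\rfloor=0$; in that example the index is $2$, $\dim H^1=0$, and the lower bound in Theorem~\ref{wpsthm} is negative (hence vacuous). The paper makes no claim that the bound is always positive; the assertion that \emph{infinitely many} weighted projective spaces admit nontrivial deformations is weaker and follows by letting $p$ grow.
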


\begin{remark} {\em
Since the case $p < q + r$ is conformal to an Einstein 
metric, it is perhaps not surprising (although not obvious) that these metrics 
are also isolated as self-dual metrics. But the non-trivial anti-self-dual deformations
we have found in the case $p > q +r$ are quite surprising, since these
metrics are rigid as Bochner-K\"ahler metrics.
}
\end{remark}

The proof of Theorem \ref{wpsthm} also relies on 
the fact that $H^2(M,g) = 0$ for these metrics, see Corollary \ref{h2wps}
below. Then as pointed out above, the actual
moduli space is locally isomorphic to $H^1/ H^0$, so the moduli space
could be of dimension $\dim(H^1)$, $\dim(H^1) - 1$, or $\dim(H^1) - 2$.
As in the case of the Calderbank-Singer spaces, 
we do not determine this action explicitly here; this is another very 
interesting problem. 

\subsection{Outline of paper} We begin in Section \ref{ogi} by
recalling Kawasaki's orbifold index theorem, and apply it to
the complex \eqref{thecomplex}. Then in Section \ref{gcyclic}, 
we analyze the correction terms for cyclic group actions, 
culminating in the following formula for the index in terms of 
the following trigonometric sum when $1 < q < p -1$: 
\begin{align}
\begin{split}
\label{N(q,p)intro}
Ind_{\Gamma}(\hat{M})&=\frac{1}{2}(15\chi_{top}+29\tau_{top})-6+\frac{14}{p}\sum_{j=1}^{p-1}\Big[\cot(\frac{\pi}{p}j)\cot(\frac{\pi}{p}qj)\Big]\\
&\phantom{==}-\frac{2}{p}\sum_{j=1}^{p-1}\Big[\cot(\frac{\pi}{p}j)\cot(\frac{\pi}{p}qj)\cos(\frac{2\pi}{p}j)\cos(\frac{2\pi}{p}qj)\Big].
\end{split}
\end{align}
We note that the quantity 
\begin{align}
\label{Dedekind}
s(q,p) = \frac{1}{4p}\sum_{j=1}^{p-1}\Big[\cot(\frac{\pi}{p}j)\cot(\frac{\pi}{p}qj)\Big]
\end{align}
is the well-known Dedekind sum \cite{Rademacher2}. This has 
a closed form expression in several special cases, but not in 
general. It is not surprising that this term appears, since 
Dedekind sums arise naturally in the index theorem 
for the signature complex \cite{HirzebruchZagier, Katase1, Zagier}. 
However, for the
anti-self-dual deformation complex, the interaction of the Dedekind sum term 
with the final term in \eqref{N(q,p)intro} makes a huge difference. 
In particular, formula  \eqref{N(q,p)intro} says that 
the sum of these terms must always be an integer!

For $x \in \RR \setminus \ZZ$, we define the sawtooth function $((x)) = \{ x \} - \frac{1}{2}$. 
In Section \ref{nontop}, we show that when $1 < q < p-1$, 
the non-topological terms in \eqref{N(q,p)intro}
can be rewritten as a Dedekind sum plus terms involving the sawtooth function:
\begin{align} 
N(q,p)=-6+\frac{12}{p}\sum_{j=1}^{p-1}\cot(\frac{\pi}{p}j)\cot(\frac{\pi}{p}qj)-4\bigg(\bigg(\frac{q^{-1;p}}{p}\bigg)\bigg) -4\bigg(\bigg(\frac{q}{p}\bigg)\bigg),
\end{align}
where $q^{-1;p}$ is the inverse of $q$ modulo $p$. 
In Section \ref{explicit} we use this, together with classical reciprocity for 
Dedekind sums to prove Theorem \ref{mainthm}.
The results dealing with the Calderbank-Singer spaces, Theorems \ref{CSthm} and \ref{CScor}, are proved in Section~\ref{CSindex}. 
Finally, in Section \ref{wpssec}, we present a complete analysis of the index 
for the canonical Bochner-K\"ahler metric on a weighted projective space, and 
prove Theorem \ref{wpsthm}. Interestingly, an important ingredient is 
Rademacher's triple reciprocity formula for Dedekind sums \cite{Rademacher1}. 
We conclude the paper with some remarks on the 
number-theoretic condition on the triple $(r,q,p)$ which occurs in 
Theorem \ref{introthm}.

\subsection{Acknowledgements}
The authors would like to thank John Lott for crucial discussions about index theory.
Thanks are also due to Nobuhiro Honda for many valuable 
discussions on the moduli space of anti-self-dual metrics.


\section{The orbifold $\Gamma$-Index}
\label{ogi}
For an orbifold $(M,g)$, the $\Gamma$-Index is given analytically by
\begin{align}
Ind_{\Gamma}(M,g) = \dim (H^0(M,g)) - \dim (H^1(M,g)) + \dim (H^2(M,g)).
\end{align}
From Kawasaki's orbifold index theorem \cite{Kawasaki}, it follows that we have a $\Gamma$-index formula of the form
\begin{align}
\label{I-G}
Ind_{\Gamma}(M)=\frac{1}{2}(15\chi_{orb}(M)+29\tau_{orb}(M))+\frac{1}{|\Gamma|}\sum_{\gamma \neq Id}\frac{ch_{\gamma}(i^*\sigma)}{ch_{\gamma}({\lambda_{-1}N_{\mathbb{C}}})}.
\end{align}
where the quantity $\chi_{orb}(M)$ is the orbifold Euler characteristic defined by
\begin{align}
\chi_{orb}(M)=\frac{1}{8\pi^2}\int_{M}(|W|^2-\frac{1}{2}|Ric|^2+\frac{1}{6}R^2)dV_{g},
\end{align}
the quantity $\tau_{orb}(M)$ is the orbifold signature defined by
\begin{align}
\tau_{orb}(M)=\frac{1}{12\pi^2}\int_{M}(|W^+|^2-|W^-|^2)dV_{g},
\end{align}
and the quantity $\frac{ch_{\gamma}(i^*\sigma)}{ch_{\gamma}(\lambda_{-1}N_{\mathbb{C}})}$ is a correction term depending upon the action of $\gamma$ on certain bundles, which we will describe in what follows.  

In the next subsection, we compute the trace of the action of $\gamma$, an element in the orbifold group $\Gamma$, on the bundles $[N_{\mathbb{C}}]$, $[S^2_0(N_{\mathbb{C}})]$ and $[S^2_0(\Lambda^2_+)]$ over the fixed point set, which we then use to compute a general formula for the $\frac{ch_{\gamma}(i^*\sigma)}{ch_{\gamma}(\lambda_{-1}N_{\mathbb{C}})}$ term.  Then we give the orbifold Euler characteristic and orbifold signature in terms of the topological Euler characteristic and topological signature and correction terms also depending upon the $\gamma$-action respectively.  Finally, we combine this information into a formula for the orbifold $\Gamma$-Index.

\subsection{Group action on bundles}
In order to compute the $\Gamma$-Index, we first need to find the trace of the $\gamma$-action, for every $\gamma$ in $\Gamma$, on the pullback of the complexified principal symbol, $i^*\sigma$, where
\begin{align}
\label{pr1}
i:p\rightarrow M
\end{align}
is the inclusion map from the fixed point $p$ into the orbifold $M$.  In this case
\begin{align}
\label{symbol}
i^*\sigma=[N_{\mathbb{C}}]-[S^2_0(N_{\mathbb{C}})]+[S^2_0\Lambda^2_+].
\end{align}
For a general $\gamma$ of the form
\begin{align}
\gamma = \left( {\begin{array}{*{20}c}
   \cos\theta_1 & -\sin\theta_1 & 0 & 0 \\
  \sin\theta_1 & \cos\theta_1 & 0 & 0 \\
   0 & 0 & \cos\theta_2 & -\sin\theta_2 \\
    0 & 0 & \sin\theta_2 & \cos\theta_2
      \end{array} } \right),
\end{align}
fixing the point $p$, the normal bundle is trivial, so $N_\mathbb{C}:=N\otimes \mathbb{C}=\mathbb{C}^4$, and we have the following proposition.
\begin{proposition}
\label{traceprop}
The trace of the $\gamma$-action on the components of $i^*\sigma$ is as follows:
\begin{enumerate}
\item $\displaystyle tr(\gamma|_{N_{\mathbb{C}}})=2\cos (\theta_1)+2\cos(\theta_2)$,
\item $\displaystyle tr(\gamma|_{S^2_0(N_{\mathbb{C}})})=1+2\cos(\theta_1+\theta_2)+ 2\cos(-\theta_1+\theta_2)+4\cos(\theta_1+\theta_2)\cos(-\theta_1+\theta_2)$,
\item $\displaystyle tr(\gamma|_{S^2_0(\Lambda^2_+)})=2cos(\theta_1+\theta_2)+4cos^2(\theta_1+\theta_2)-1.$
\end{enumerate}
\end{proposition}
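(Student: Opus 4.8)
The plan is to compute each trace by decomposing the relevant representation of the rotation $\gamma$ into weight spaces under the circle action it generates, and then summing the resulting exponentials in pairs to get cosines. Throughout I work over $\CC$, identifying $\RR^4\cong\CC^2$ as in \eqref{qpaction}, so that $\gamma$ acts on $N_\CC=\CC^4 = \CC^2 \oplus \overline{\CC^2}$ with eigenvalues $e^{i\theta_1}, e^{i\theta_2}, e^{-i\theta_1}, e^{-i\theta_2}$. Summing these in conjugate pairs immediately gives part (1), $tr(\gamma|_{N_\CC}) = 2\cos\theta_1 + 2\cos\theta_2$. This warm-up fixes the weight conventions used in the other two parts.

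For part (2), I would use the standard decomposition $N_\CC \otimes N_\CC = S^2(N_\CC) \oplus \Lambda^2(N_\CC)$ and then $S^2(N_\CC) = S^2_0(N_\CC) \oplus \CC$ (the trace part), so that $tr(\gamma|_{S^2_0(N_\CC)}) = tr(\gamma|_{S^2(N_\CC)}) - 1$. Since $tr(\gamma|_{S^2(V)}) = \tfrac12\big((tr\,\gamma)^2 + tr(\gamma^2)\big)$ for any endomorphism, I can plug in $tr\,\gamma = 2\cos\theta_1 + 2\cos\theta_2$ and $tr(\gamma^2) = 2\cos 2\theta_1 + 2\cos 2\theta_2$, expand, and regroup. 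Alternatively — and this is cleaner for matching the stated form — I would list the $10$ weights of $S^2(\CC^4)$ directly: the four "diagonal" weights $2\theta_1, 2\theta_2, -2\theta_1, -2\theta_2$, the two zero weights from $\theta_1 + (-\theta_1)$ and $\theta_2 + (-\theta_2)$, and the four "mixed" weights $\pm\theta_1 \pm \theta_2$. Summing the exponentials, the two zero weights contribute $2$, the mixed ones contribute $2\cos(\theta_1+\theta_2) + 2\cos(\theta_1 - \theta_2)$, and the diagonal ones contribute $2\cos 2\theta_1 + 2\cos 2\theta_2$. Subtracting $1$ for the traceless part gives $1 + 2\cos(\theta_1+\theta_2) + 2\cos(-\theta_1+\theta_2) + 2\cos 2\theta_1 + 2\cos 2\theta_2$; the claimed expression then follows from the identity $2\cos 2\theta_1 + 2\cos 2\theta_2 = 4\cos(\theta_1+\theta_2)\cos(-\theta_1+\theta_2) + \dots$ — more precisely, using $\cos 2\theta_1 + \cos 2\theta_2 = 2\cos(\theta_1+\theta_2)\cos(\theta_1-\theta_2)$, which is exactly the product-to-sum identity, so $2\cos 2\theta_1 + 2\cos 2\theta_2 = 4\cos(\theta_1+\theta_2)\cos(-\theta_1+\theta_2)$.

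For part (3), the key geometric input is that $\Lambda^2_+$ is the $3$-dimensional real bundle spanned by the self-dual $2$-forms, and as a $\U(2)$-representation under the identification $\RR^4 \cong \CC^2$ it decomposes so that $\gamma$ acts on $\Lambda^2_+ \otimes \CC$ with weights $\theta_1 + \theta_2, \ 0, \ -(\theta_1+\theta_2)$; that is, $\Lambda^2_+\otimes\CC$ carries the weights of the symmetric square of the standard $2$-dimensional representation of $SU(2)$ evaluated on the element with eigenvalue $e^{i(\theta_1+\theta_2)/2}$, or equivalently $\Lambda^2_+\otimes\CC \cong L \oplus \underline{\CC} \oplus L^{-1}$ where $L$ has weight $\theta_1+\theta_2$. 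I would first verify this weight list explicitly using the standard basis $dx_1\wedge dx_2 + dx_3 \wedge dx_4$, $dx_1\wedge dx_3 + dx_4\wedge dx_2$, $dx_1\wedge dx_4 + dx_2 \wedge dx_3$ of $\Lambda^2_+$ and checking how $\gamma$ rotates them. Writing $\phi = \theta_1+\theta_2$, so $\Lambda^2_+\otimes\CC$ has eigenvalues $e^{i\phi}, 1, e^{-i\phi}$, I then need $tr(\gamma|_{S^2_0(\Lambda^2_+)})$. By the same symmetric-square computation as in part (2), $S^2(\Lambda^2_+ \otimes \CC)$ is $6$-dimensional with weights $2\phi, \phi, 0$ (from $\phi + (-\phi)$), $0$ (from $0+0$), $-\phi, -2\phi$; summing exponentials gives $2\cos 2\phi + 2\cos\phi + 2$, and subtracting $1$ for the trace part yields $tr(\gamma|_{S^2_0(\Lambda^2_+)}) = 2\cos\phi + 2\cos 2\phi + 1$. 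Finally, applying the double-angle identity $\cos 2\phi = 2\cos^2\phi - 1$ converts this to $2\cos(\theta_1+\theta_2) + 4\cos^2(\theta_1+\theta_2) - 1$, as claimed.

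The only genuine obstacle is part (3): correctly pinning down the weights of the $\gamma$-action on $\Lambda^2_+$, since this is where the self-dual orientation and the complex identification interact, and a sign error there would propagate through everything downstream. Once the weight list $\{\theta_1+\theta_2, 0, -(\theta_1+\theta_2)\}$ is established, parts (2) and (3) are both just "write the weights of a symmetric square, sum the exponentials in conjugate pairs, subtract the trace, apply product-to-sum and double-angle identities." I would therefore spend the most care on an explicit basis computation for $\Lambda^2_+$, and treat the rest as routine trigonometric bookkeeping.
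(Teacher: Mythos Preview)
Your proposal is correct. Parts (1) and (3) match the paper's argument closely: the paper diagonalizes $\gamma$ on $N_\CC$ exactly as you do, and for $\Lambda^2_+$ it writes down an explicit basis and computes that $\gamma$ acts as a rotation by $\theta_1+\theta_2$ on a $2$-plane fixing a third vector, which is precisely your weight list $\{\theta_1+\theta_2,\,0,\,-(\theta_1+\theta_2)\}$; the $S^2_0$ computation is then the same bookkeeping you describe.

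The genuine difference is in part (2). You work directly with the weights of $S^2(\CC^4)$, subtract the trace line, and then invoke the product-to-sum identity $\cos 2\theta_1+\cos 2\theta_2=2\cos(\theta_1+\theta_2)\cos(\theta_1-\theta_2)$ to match the stated form. The paper instead uses the isomorphism $S^2_0(T^*M)\cong\Lambda^2_+\otimes\Lambda^2_-$, computes the action of $\gamma$ on $\Lambda^2_-$ (a rotation by $-\theta_1+\theta_2$ plus a fixed vector, so trace $1+2\cos(-\theta_1+\theta_2)$), and obtains the trace on $S^2_0(N_\CC)$ as the product $(1+2\cos(\theta_1+\theta_2))(1+2\cos(-\theta_1+\theta_2))$. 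The paper's route explains \emph{why} the answer factors and delivers the stated expression without any trig identity; your route is more self-contained (no need to know or justify the $\Lambda^2_+\otimes\Lambda^2_-$ isomorphism) but the factored form appears only after an extra step. Either way the verification is straightforward, and your identification of the $\Lambda^2_+$ weights as the only delicate point is exactly right.
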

\begin{proof}
The normal bundle can be written as $N=x_1\oplus \cdot \cdot \cdot \oplus x_4$ in real coordinates.  After complexifying the normal bundle we can diagonalize $\gamma$ to write
\begin{align}
\gamma|_{N_{\mathbb{C}}} = 
\left( {\begin{array}{*{20}c}
   e^{i\theta_1} & 0 & 0 & 0 \\
   0 & e^{-i\theta_1} & 0 & 0  \\
   0 & 0 & e^{i\theta_2}& 0  \\
    0 & 0 & 0 & e^{-i\theta_2}
      \end{array} } \right),
\end{align}
with respect to the complex basis $\{\lambda_1\oplus \lambda_2\oplus \lambda_3 \oplus \lambda_4\}=\mathbb{C}^4$, where
\begin{align}
\{2x_1, 2x_2, 2x_3, 2x_4 \} = \{ 
\lambda_1-i\lambda_2, i\lambda_1-\lambda_2,
\lambda_3-i\lambda_4,i\lambda_3-\lambda_4  \}.
\end{align}
Formula $(1)$ follows immediately. 

Next, to see how $\gamma$ acts on $S^2_0(N_g)=\Lambda^2_+ \otimes \Lambda^2_-$ we first examine how $\gamma$ acts on $\Lambda^2_+$ and $\Lambda^2_-$ independently.  
We use the following basis for $\Lambda^2_+$:
\begin{align}
\begin{split}
\omega_1^+&=\frac{1}{2} [d\lambda_2\wedge d\lambda_1+d\lambda_4\wedge d\lambda_3],\\
\omega_2^+&=
\frac{1}{2}[d\lambda_1\wedge d\lambda_3+d\lambda_4\wedge d\lambda_2],\\
\omega_3^+
&=\frac{1}{2}[id\lambda_1\wedge d\lambda_3+id\lambda_2\wedge d\lambda_4],
\end{split}
\end{align}
and the following basis for $\Lambda^2_-$:
\begin{align}
\begin{split}
\omega_1^-&=\frac{1}{2} [d\lambda_2\wedge d\lambda_1-d\lambda_4\wedge d\lambda_3],\\
\omega_2^-&=\frac{1}{2}[id\lambda_3\wedge d\lambda_2+id\lambda_4\wedge d\lambda_1],\\
\omega_3^-&=\frac{1}{2}[d\lambda_2\wedge d\lambda_3+d\lambda_4\wedge d\lambda_1].
\end{split}
\end{align}
So we see that $\gamma$ acts on $\Lambda_+^2$ by
\begin{align}
\begin{split}
\gamma(\omega_1^+)&=\omega_1^+\\
\gamma(\omega_2^+)
&=\frac{1}{2}[e^{i(\theta_1+\theta_2)}(\omega_2^+-i\omega_3^+)+e^{-i(\theta_1+\theta_2)}(\omega_2^++i\omega_3^+)]\\
\gamma(\omega_3^+)
&=\frac{1}{2}[e^{i(\theta_1+\theta_2)}(\omega_3^++i\omega_2^+)+e^{-i(\theta_1+\theta_2)}(\omega_3^+-i\omega_2^+)],
\end{split}
\end{align}
and $\gamma$ acts on $\Lambda_-^2$ by
\begin{align}
\begin{split}
\gamma(\omega_1^-)&=\omega_1^-\\
\gamma(\omega_2^-)
&=\frac{1}{2}[e^{i(-\theta_1+\theta_2)} (\omega_2^--i\omega_3^-)+e^{i(\theta_1-\theta_2)}(\omega_2^-+i\omega_3^-)]\\
\gamma(\omega_3^+)
&=\frac{1}{2}[e^{i(-\theta_1+\theta_2)}(\omega_3^-+i\omega_2^-)+e^{i(\theta_1-\theta_2)}(\omega_3^--i\omega_2^-)].
\end{split}
\end{align}
Therefore, we can write
\begin{align}
 \gamma|_{\Lambda_+^2}
= \left(
\begin{matrix}
1&0&0\\
0& \cos(\theta_1+\theta_2) &- \sin(\theta_1+\theta_2) \\
   0& \sin(\theta_1+\theta_2) & \cos(\theta_1+\theta_2)
\end{matrix} 
\right),
\end{align}      
and
\begin{align}
\gamma|_{\Lambda_-^2}
= \left(
\begin{matrix}
1&0&0\\
0& \cos(-\theta_1+\theta_2) &- \sin(-\theta_1+\theta_2) \\
 0& \sin(-\theta_1+\theta_2) & \cos(-\theta_1+\theta_2)
\end{matrix} \right).
\end{align}      
To derive $(2)$, we compute
\begin{align}
\begin{split}
tr(\gamma|_{S^2_0 N_{\mathbb{C}}})&=tr(\gamma|_{\Lambda^2_+\otimes \Lambda^2_-})\\
&=tr(\gamma|_{\Lambda^2_+})\cdot tr(\gamma|_{\Lambda^2_-})\\
&=(1+ 2\cos(\theta_1+\theta_2))\cdot (1+ 2\cos(-\theta_1+\theta_2))\\
=1+2\cos(\theta_1+\theta_2)&+ 2\cos(-\theta_1+\theta_2)+4\cos(\theta_1+\theta_2)\cos(-\theta_1+\theta_2).
\end{split}
\end{align}
Next, to see how $\gamma$ acts on $S^2_0(\Lambda^2_+)$, decompose
\begin{align}
S^2_0\Lambda_+^2=[\mathbb{C}\otimes (\omega_2^+\oplus \omega_3^+)]\oplus S^2_0(\omega_2^+\oplus \omega_3^+)\oplus tr,
\end{align}
where $ tr=2\omega_1^+-(\omega_2^++\omega_3^+)$
denotes the trace component,
and write the basis of $S^2_0(\omega_2^+\oplus \omega_3^+)$ as
\begin{align}
 \{\omega_2^+\otimes \omega_2^+-\omega_3^+\otimes \omega_3^+, \omega_2^+\otimes \omega_3^++\omega_3^+\otimes \omega_2^+\}.
 \end{align}
We see that
\begin{align}
&\gamma|_{\omega_1^+\otimes (\omega_2^+ \oplus \omega_3^+)}=\left(
\begin{matrix}
	\cos(\theta_1+\theta_2) &- \sin(\theta_1+\theta_2) \\
  	\sin(\theta_1+\theta_2) & \cos(\theta_1+\theta_2)
\end{matrix} \right), \\
&\gamma|_{S^2_0(\omega_2^+\oplus \omega_3^2)}=\left( 
\begin{matrix}
	\cos^2(\theta_1+\theta_2) -\sin^2(\theta_1+\theta_2) & -2\sin(\theta_1+\theta_2) \cos(\theta_1+\theta_2) \\
  	2\sin(\theta_1+\theta_2) \cos(\theta_1+\theta_2) & \cos^2(\theta_1+\theta_2) -\sin^2(\theta_1+\theta_2)
\end{matrix} \right),\\
&\gamma|_{tr\in S^2_0\Lambda^2_+}=1.
\end{align}
Using these, we derive $(3)$ by computing
\begin{align}
\begin{split}
tr(\gamma|_{S^2_0\Lambda^2_+})&=[2cos(\theta_1+\theta_2)]+[4cos^2(\theta_1+\theta_2)-2]+[1]\\
&=2cos(\theta_1+\theta_2)+4cos^2(\theta_1+\theta_2)-1.
\end{split}
\end{align}
\end{proof}

\subsection{Equivariant Chern character}  
We next compute the term $\frac{ch_{\gamma}(i^*\sigma)}{ch_{\gamma}(\lambda_{-1}N_{\mathbb{C}})}$.
The numerator of this term is the ${\gamma}$-equivariant Chern character of the pullback of the principal symbol, $i^*\sigma$, described in $\eqref{pr1}$ and $\eqref{symbol}$.  The denominator is the ${\gamma}$-equivariant Chern character of the $K$-theoretic Thom class of the complexified normal bundle.  Since the normal bundle is trivial over the fixed point, this is
\begin{align}
\lambda_{-1}N_\mathbb{C}=[\Lambda^0(\mathbb{C}^4)]-[\Lambda^1(\mathbb{C}^4)]+[\Lambda^2(\mathbb{C}^4)]-[\Lambda^3(\mathbb{C}^4)]+[\Lambda^4(\mathbb{C}^4)].
\end{align}
Since the ${\gamma}$-equivariant Chern character is just the ${\gamma}$-action times 
the Chern character of each eigenspace, using Proposition \ref{traceprop}, we compute 
\begin{align}
\begin{split}
ch_{\gamma}(i^*\sigma)&=tr({\gamma}|_{N_\mathbb{C}})-tr({\gamma}|_{S^2_0N_\mathbb{C}})+tr({\gamma}|_{S^2_0\Lambda^2_+})\\
&=[2\cos(\theta_1)+2\cos(\theta_2)]\\
&\phantom{=}-[1+2\cos(\theta_1+\theta_2)+2\cos(-\theta_1+\theta_2)+4\cos(\theta_1+\theta_2)\cos(-\theta_1+\theta_2)]\\
&\phantom{=}+[2\cos(\theta_1+\theta_2)+4\cos^2(\theta_1+\theta_2)-1]\\
&=[2\cos\theta_1+2\cos\theta_2-2-2\cos(\theta_1)\cos(\theta_2)]\\
&\phantom{=}+[-2\sin(\theta_1)\sin(\theta_2)-8\cos(\theta_1)\cos(\theta_2)\sin(\theta_1)\sin(\theta_2)+8\sin^2(\theta_1)\sin^2(\theta_2)]\\
&=[-2(\cos\theta_1-1)(\cos\theta_2-1)]+[8(1-\cos^2\theta_1)(1-\cos^2\theta_2)]\\
&\phantom{=}+[-2\sin(\theta_1)\sin(\theta_2)-8\cos(\theta_1)\cos(\theta_2)\sin(\theta_1)\sin(\theta_2)].
\end{split}
\end{align}
Similarly, we compute
\begin{align}
\begin{split}
ch_{\gamma}(\lambda_{-1}N_{\mathbb{C}})&=tr({\gamma}|_{[\Lambda^0(\mathbb{C}^4)]})-tr({\gamma}|_{[\Lambda^1(\mathbb{C}^4)]})+tr({\gamma}|_{[\Lambda^2(\mathbb{C}^4)]})\\
&\phantom{=}-tr({\gamma}|_{[\Lambda^3(\mathbb{C}^4)]})+tr({\gamma}|_{[\Lambda^4(\mathbb{C}^4)]})
=4(\cos\theta_1-1)(\cos\theta_2-1).
\end{split}
\end{align}
Therefore
\begin{align}
\begin{split}
\frac{ch_{\gamma}(i^*\sigma)}{ch_{\gamma}(\lambda_{-1}N_{\mathbb{C}})}&=\Big[-\frac{1}{2}+2(1+\cos\theta_1)(1+\cos\theta_2)\Big]\\
&\phantom{=}-\bigg[\frac{2\sin(\theta_1)\sin(\theta_2)+8\cos(\theta_1)\cos(\theta_2)\sin(\theta_1)\sin(\theta_2)}{4(\cos\theta_1-1)(\cos\theta_2-1)}\bigg].
\end{split}
\end{align}
Since $\frac{\sin(\theta_1)\sin(\theta_2)}{(\cos\theta_1-1)(\cos\theta_2-1)}=\cot(\frac{\theta_1}{2})\cot(\frac{\theta_2}{2})$, we see that
\begin{align}
\begin{split}
\label{ch_g}
\frac{ch_{\gamma}(i^*\sigma)}{ch_{\gamma}(\lambda_{-1}N_{\mathbb{C}})}
&=-\frac{1}{2}+2(1+\cos\theta_1)(1+\cos\theta_2)-\frac{1}{2}\cot (\frac{\theta_1}{2})\cot(\frac{\theta_2}{2})\\
&\phantom{==}-2\cot(\frac{\theta_1}{2})\cot(\frac{\theta_2}{2})\cos(\theta_1)\cos(\theta_2).
\end{split}
\end{align}

\subsection{The $\Gamma$-Index}
For an orbifold with a single isolated singularity, 
we have a formula for the Euler characteristic 
\begin{align}
\label{euler}
\chi_{top}(M)=\chi_{orb}(M)+\frac{|\Gamma|-1}{|\Gamma|},
\end{align}
and a formula for the signature 
\begin{align}
\label{tau}
\tau_{top}(M)=\tau_{orb}(M)-\eta(S^3/\Gamma),
\end{align}
where $\Gamma \subset {\rm{SO}}(4)$ 
is the orbifold group around the fixed point
and $\eta(S^3/\Gamma)$ is the eta-invariant, which in our case is given by
\begin{align}
\eta(S^3/\Gamma)=\frac{1}{|\Gamma|}\sum_{\gamma \neq Id}\Big[\cot(\frac{\theta_1}{2}j)\cot(\frac{\theta_2}{2}j)\Big].
\end{align}
See \cite{Hitchin} for a useful discussion of the 
formulas $\eqref{euler}$ and $\eqref{tau}$.

 Combining formulas $\eqref{euler}$ and $\eqref{tau}$ with the formula 
for the $\Gamma$-Index given in $\eqref{I-G}$, we have
\begin{align}
\label{Gamma-I}
Ind_{\Gamma}=\frac{1}{2}(15\chi_{top}+29\tau_{top})-\frac{15}{2}\bigg(\frac{|\Gamma|-1}{|\Gamma|}\bigg)+\frac{29}{2}\eta(S^3/\Gamma)+\frac{1}{|\Gamma|}\sum_{\gamma\neq Id}\frac{ch_{\gamma}(i^*\sigma)}{ch_{\gamma}(\lambda_{-1}N_{\mathbb{C}})},
\end{align}
where the last term is given by formula $\eqref{ch_g}$.

\section{$\Gamma$-Index for cyclic group actions}
\label{gcyclic}
We consider an orbifold with an isolated singularity having the group action 
$\Gamma_{(q,p)}$ generated by
\begin{align}
\gamma = 
\left( {\begin{array}{*{20}c}
   \cos(\frac{2\pi}{p}) & -\sin(\frac{2\pi}{p}) & 0 & 0 \\
  \sin(\frac{2\pi}{p}) & \cos(\frac{2\pi}{p}) & 0 & 0  \\
   0 & 0 & \cos(\frac{2\pi}{p}q)& -\sin(\frac{2\pi}{p}q) \\
    0 & 0 & \sin(\frac{2\pi}{p}q)& \cos(\frac{2\pi}{p}q)
      \end{array} } \right),
\end{align}
where $p$ and $q$ are relatively prime.  
The cases when $q=1$ and $q=p-1$ have already been resolved in \cite{ViaclovskyIndex}, 
and although we are specifically interested when $1<q<p-1$, we will make use of the sum
\begin{align}
\sum_{\gamma \neq Id} \frac{ch_{\gamma}(i^*\sigma)}{ch_{\gamma}(\lambda_{-1}N_{\mathbb{C}})}
\end{align}
in all cases, and make our computations accordingly.  We begin this section by simplifying our formula for this sum in general:
\begin{align}
\begin{split}
\label{sich}
\sum_{{\gamma} \neq Id} \frac{ch_{\gamma}(i^*\sigma)}{ch_{\gamma}(\lambda_{-1}N_{\mathbb{C}})}&=\sum_{j=1}^{p-1}\Big[-\frac{1}{2}+2(1+\cos(\frac{2\pi}{p}))(1+\cos(\frac{2\pi}{p}q))-\frac{1}{2}\cot(\frac{\pi}{p})\cot(\frac{\pi}{p}q)\Big]\\
&\phantom{==}-\sum_{j=1}^{p-1}\Big[2\cot(\frac{\pi}{p}j)\cot(\frac{\pi}{p}qj)\cos(\frac{2\pi}{p}j)\cos(\frac{2\pi}{p}qj)\Big]\\
&=\sum_{j=1}^{p-1}\Big[\frac{3}{2}+2\cos(\frac{2\pi}{p}j)+2\cos(\frac{2\pi}{p}qj)+ \cos(\frac{2\pi}{p}(q+1)j)\Big]\\
&\phantom{==}+\sum_{j=1}^{p-1}\Big[\cos(\frac{2\pi}{p}(q-1)j)-\frac{1}{2}\cot(\frac{\pi}{p}j)\cot(\frac{\pi}{p}qj) \Big]\\
&\phantom{==}+\sum_{j=1}^{p-1}\Big[-2\cot(\frac{\pi}{p}j)\cot(\frac{\pi}{p}qj)\cos(\frac{2\pi}{p}j)\cos(\frac{2\pi}{p}qj)\Big].
\end{split}
\end{align}
Now, to further simplify our formula for the $\Gamma$-Index, it is necessary to separate into the following cases:

\subsection{$\Gamma$-Index when $1<q<p-1$} Using $\eqref{sich}$, we see that in this case
\begin{align}
\begin{split}
\sum_{{\gamma}\neq Id} \frac{ch_{\gamma}(i^*\sigma)}{ch_{\gamma}(\lambda_{-1}N_{\mathbb{C}})}&=\Big[\frac{3}{2}p-\frac{15}{2}\Big]
-\frac{1}{2}\sum_{j=1}^{p-1}\Big[\cot(\frac{\pi}{p}j)\cot(\frac{\pi}{p}qj)\Big]\\
&\phantom{==}-2\sum_{j=1}^{p-1}\Big[\cot(\frac{\pi}{p}j)\cot(\frac{\pi}{p}qj)\cos(\frac{2\pi}{p}j)\cos(\frac{2\pi}{p}qj)\Big].
\end{split}
\end{align}
Therefore, by combining this with formula $\eqref{Gamma-I}$ for the $\Gamma$-Index, we have
\begin{align}
\begin{split}
\label{N(q,p)}
Ind_{\Gamma}(M)&=\frac{1}{2}(15\chi_{top}+29\tau_{top})-6+\frac{14}{p}\sum_{j=1}^{p-1}\Big[\cot(\frac{\pi}{p}j)\cot(\frac{\pi}{p}qj)\Big]\\
&\phantom{==}-\frac{2}{p}\sum_{j=1}^{p-1}\Big[\cot(\frac{\pi}{p}j)\cot(\frac{\pi}{p}qj)\cos(\frac{2\pi}{p}j)\cos(\frac{2\pi}{p}qj)\Big].
\end{split}
\end{align}

\subsection{$\Gamma$-Index when $q=1$ and $p=2$}
Using $\eqref{sich}$, we see that in this case
\begin{align}
\begin{split}
\sum_{{\gamma}\neq Id} \frac{ch_{\gamma}(i^*\sigma)}{ch_{\gamma}(\lambda_{-1}N_{\mathbb{C}})}=-\frac{1}{2}.
\end{split}
\end{align}
Therefore, by combining this with formula $\eqref{Gamma-I}$ for the $\Gamma$-Index, we have
\begin{align}
\begin{split}
Ind_{\Gamma}(M)&=\frac{1}{2}(15\chi_{top}+29\tau_{top})-4
\end{split}
\end{align}

\subsection{$\Gamma$-Index when $q=1$ and $p>2$}
Using $\eqref{sich}$, we see that in this case
\begin{align}
\begin{split}
\sum_{{\gamma}\neq Id} \frac{ch_{\gamma}(i^*\sigma)}{ch_{\gamma}(\lambda_{-1}N_{\mathbb{C}})}&=\frac{5}{2}p-\frac{15}{2}-\sum_{j=1}^{p-1}\Big[\frac{1}{2}\cot^2(\frac{\pi}{p}j)+2\cot^2(\frac{\pi}{p}j)\cos^2(\frac{2\pi}{p}j)\Big].
\end{split}
\end{align}
Therefore, by combining this with formula $\eqref{Gamma-I}$ for the $\Gamma$-Index,
and the following well-known formula for the Dedekind sum (see \cite{Rademacher2}):
\begin{align}
\frac{1}{4p}\sum_{j=1}^{p-1}\cot^2(\frac{\pi}{p}j) = \frac{1}{12p}(p-1)(p-2),
\end{align}
we have
\begin{align}
\begin{split}
\label{q=1}
Ind_{\Gamma}(M)&=\frac{1}{2}(15\chi_{top}+29\tau_{top})-5+\frac{14}{p}\sum_{j=1}^{p-1}\cot^2(\frac{\pi}{p}j)-\frac{2}{p}\sum_{j=1}^{p-1}\cot^2(\frac{\pi}{p}j)\cos^2(\frac{2\pi}{p}j)\\
&=\frac{1}{2}(15\chi_{top}+29\tau_{top})-5+\frac{12}{p}\sum_{j=1}^{p-1}\cot^2(\frac{\pi}{p}j)+\frac{8}{p}\sum_{j=1}^{p-1}\cos^4(\frac{\pi}{p}j)\\
&=\frac{1}{2}(15\chi_{top}+29\tau_{top})-2-\frac{8}{p}+\frac{12}{p}\sum_{j=1}^{p-1}\cot^2(\frac{\pi}{p}j)\\
&=\frac{1}{2}(15\chi_{top}+29\tau_{top})-2-\frac{8}{p}+\frac{4}{p}(p^2-3p+2)\\
&=\frac{1}{2}(15\chi_{top}+29\tau_{top})+4p-14.
\end{split}
\end{align}

\subsection{$\Gamma$-Index when $q=p-1$ and $p>2$}
Using $\eqref{sich}$, we see that in this case
\begin{align}
\begin{split}
\sum_{{\gamma}\neq Id} \frac{ch_{\gamma}(i^*\sigma)}{ch_g(\lambda_{-1}N_{\mathbb{C}})}&=\frac{5}{2}p-\frac{15}{2}+\sum_{j=1}^{p-1}\Big[\frac{1}{2}\cot^2(\frac{\pi}{p}j)+2\cot^2(\frac{\pi}{p}j)\cos^2(\frac{2\pi}{p}j)\Big].
\end{split}
\end{align}
Therefore, by combining this with formula $\eqref{Gamma-I}$ for the $\Gamma$-Index, we have
\begin{align}
\begin{split}
\label{q=p-1}
Ind_{\Gamma}(M)&=\frac{1}{2}(15\chi_{top}+29\tau_{top})-5-\frac{14}{p}\sum_{j=1}^{p-1}\cot^2(\frac{\pi}{p}j)+\frac{2}{p}\sum_{j=1}^{p-1}\cot^2(\frac{\pi}{p}j)\cos^2(\frac{2\pi}{p}j)\\
&=\frac{1}{2}(15\chi_{top}+29\tau_{top})-8+\frac{8}{p}-\frac{12}{p}\sum_{j=1}^{p-1}\cot^2(\frac{\pi}{p}j)\\
&=\frac{1}{2}(15\chi_{top}+29\tau_{top})-8+\frac{8}{p}-\frac{4}{p}(p^2-3p+2)\\
&=\frac{1}{2}(15\chi_{top}+29\tau_{top})-4p+4.
\end{split}
\end{align}

\section{Non-topological terms in the $\Gamma$-Index}
\label{nontop}
We denote the terms in the $\Gamma$-Index not involving the topological Euler characteristic or topological signature by $N(q,p)$.  Also we change our  notation of the $\Gamma$-Index from $Ind_{\Gamma}$  to $Ind_{(q,p)}$ to reflect the particular group action.  With this new notation we can write the index as
\begin{align}
\label{indn}
Ind_{(q,p)}&=\frac{1}{2}(15\chi_{top}+29\tau_{top})+N(q,p).
\end{align}
In this section we will simplify our formulas for $N(q,p)$.  Also, for the remainder of the paper we will use the following notation.  For two relatively prime positive integers $\alpha < \beta$, denote $\alpha$'s inverse modulo $\beta$ by $\alpha^{-1;\beta}$, and $\beta$'s inverse modulo $\alpha$ by $\beta^{-1;\alpha}$, i.e.
\begin{align}
\alpha\alpha^{-1;\beta}\equiv \text{1 mod $\beta$} \text{ and }
\beta\beta^{-1;\alpha}\equiv \text{1 mod $\alpha$}.
\end{align}

In the cases that $N(q,p)$ is easy to compute we see that
\begin{align}
\label{etc}
N(q,p)=
\begin{cases}
4p-14 &\text{ when $1=q<p-1$}\\
-4p+4 &\text{ when   } q=p-1.
\end{cases}
\end{align}
Note that the case when $q=\pm 1$ and $p=2$ can be actually included in the $q=p-1$ case.  It will be convenient later in paper if we also have these formulas written in terms of sawtooth functions, a cotangent sum and a constant where the sawtooth function is defined to be
\begin{align}
\label{st}
((x))=
\begin{cases}
x-\lfloor x \rfloor -\frac{1}{2} &\text{   when $x\notin \mathbb{Z}$}\\
0 &\text{   when $x\in \mathbb{Z}$}.
\end{cases}
\end{align}
We will include the formulas from $\eqref{etc}$, written in this way, below in Theorem $\ref{N-nonexceptional}$.

To compute $N(q,p)$ in all other cases we will employ the following proposition:
\begin{proposition}
\label{fsp}
\begin{align}
-\frac{1}{2p}\sum_{j=1}^{p-1}\sin(\frac{2\pi}{p}qj)\cot(\frac{\pi}{p}j)=\bigg(\bigg(\frac{q}{p}\bigg)\bigg),
\end{align}
which is the sawtooth function defined in $\eqref{st}$.
\end{proposition}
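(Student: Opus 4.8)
The plan is to prove Proposition \ref{fsp} by recognizing the left-hand side as a finite Fourier-type sum and evaluating it through the known Fourier expansion of the sawtooth function on the cyclic group $\ZZ/p\ZZ$. The key identity is the classical finite cotangent sum: for $0 < m < p$,
\begin{align*}
\frac{1}{p}\sum_{j=1}^{p-1}\sin\Big(\frac{2\pi}{p}mj\Big)\cot\Big(\frac{\pi}{p}j\Big) = 1 - \frac{2m}{p} = -2\bigg(\bigg(\frac{m}{p}\bigg)\bigg),
\end{align*}
which can be derived, for instance, by writing $\cot(\pi j/p)$ via the Fourier expansion $\cot(\pi j /p) = \frac{1}{p}\sum_{\ell} \text{(geometric sum)}$, or more directly by using $\sum_{\ell=1}^{p-1}(1-2\ell/p)\sin(2\pi \ell j/p) = \cot(\pi j /p)$, the standard Fourier series of the sawtooth sampled at rational points. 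Once this identity is in hand, the proposition follows immediately upon setting $m \equiv q \bmod p$ and noting that $((x))$ depends only on $x \bmod 1$, so $((q/p)) = ((m/p))$ for the representative $1 \le m \le p-1$.

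First I would establish the auxiliary identity $\sum_{\ell=1}^{p-1}\big(1 - \tfrac{2\ell}{p}\big)\sin\big(\tfrac{2\pi \ell j}{p}\big) = \cot\big(\tfrac{\pi j}{p}\big)$ for $1 \le j \le p-1$. This is the Fourier expansion on $\ZZ/p\ZZ$ of the function $\ell \mapsto ((\ell/p)) = \ell/p - 1/2$ (for $1\le \ell \le p-1$), paired against the sine characters; it can be checked by summing the geometric series $\sum_\ell \ell\, \zeta^{\ell j}$ with $\zeta = e^{2\pi i/p}$ and simplifying. Then I would substitute this expression for $\cot(\pi j/p)$ into the left-hand side of the proposition, interchange the order of the two finite sums, and use the orthogonality relation $\sum_{j=1}^{p-1}\sin(\tfrac{2\pi q j}{p})\sin(\tfrac{2\pi \ell j}{p}) = \tfrac{p}{2}$ when $\ell \equiv \pm q$ and $0$ otherwise. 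Since $1 \le q \le p-1$, only $\ell = q$ (giving $+p/2$) and $\ell = p-q$ (giving $+p/2$ as well, since $\sin(2\pi(p-q)j/p) = -\sin(2\pi q j /p)$, hence the product is again $\sin^2$) contribute — one must track the sign carefully here. Collecting the two surviving terms with coefficients $(1 - 2q/p)$ and $(1 - 2(p-q)/p) = (2q/p - 1)$ and the sign from $\sin(2\pi(p-q)j/p) = -\sin(2\pi q j/p)$, everything combines to $-(1 - 2q/p) \cdot p = 2q - p$, and dividing by $-2p$ yields exactly $q/p - 1/2 = ((q/p))$.

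The main obstacle is bookkeeping rather than conceptual: one must be careful with the two frequencies $\ell = q$ and $\ell = p - q$ that both resonate with $\sin(2\pi q j/p)$ under the orthogonality relation, and with the resulting signs — it is easy to drop a factor or misattribute a sign and arrive at $-((q/p))$ or $((q/p)) - 1/2$ instead. An alternative, perhaps cleaner, route that I would keep in reserve is to cite the identity directly in the form $\sum_{j=1}^{p-1} \zeta^{qj}\cot(\pi j/p) = i(p - 2q)$ (the standard evaluation behind Dedekind's cotangent sums, as in \cite{Rademacher2}) and take imaginary parts; this sidesteps the interchange-of-sums argument entirely. Either way, the verification is a short finite computation and I do not anticipate any analytic subtleties.
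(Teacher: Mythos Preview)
Your approach is sound, but it goes well beyond what the paper does: the paper's entire proof is the single sentence ``This is due to Eisenstein; see \cite{Apostol}.'' In other words, the authors treat this as a classical identity and simply cite it. Your finite-Fourier argument via the auxiliary identity
\[
\sum_{\ell=1}^{p-1}\Big(1-\tfrac{2\ell}{p}\Big)\sin\Big(\tfrac{2\pi \ell j}{p}\Big)=\cot\Big(\tfrac{\pi j}{p}\Big)
\]
and orthogonality of sines on $\ZZ/p\ZZ$ is a correct and self-contained derivation, and your alternative of taking the imaginary part of $\sum_{j=1}^{p-1}\zeta^{qj}\cot(\pi j/p)=i(p-2q)$ is exactly the one-line route that Apostol and Rademacher--Grosswald record.

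One bookkeeping slip to flag, since you explicitly anticipated it: the orthogonality sum at $\ell=p-q$ equals $-p/2$, not $+p/2$ (because $\sin(2\pi(p-q)j/p)=-\sin(2\pi qj/p)$ makes the product \emph{negative}). With that sign, the two surviving contributions are $(1-2q/p)\cdot\tfrac{p}{2}$ and $(2q/p-1)\cdot(-\tfrac{p}{2})$, which add to $p-2q$, not $2q-p$. Dividing by $-2p$ then gives $q/p-1/2=((q/p))$ as claimed; your final displayed arithmetic has a compensating sign error that happens to land on the right answer. Worth cleaning up, but the method is fine.
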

\begin{proof}
This is due to Eisenstein; see \cite{Apostol}. 
\end{proof}
Now, we have
\begin{theorem}
\label{N-nonexceptional}
When $q\not\equiv (p-1) \text { mod } p$ we have the formula
\begin{align} 
N(q,p)=-6+\frac{12}{p}\sum_{j=1}^{p-1}\cot(\frac{\pi}{p}j)\cot(\frac{\pi}{p}qj)-4\bigg(\bigg(\frac{q^{-1;p}}{p}\bigg)\bigg) -4\bigg(\bigg(\frac{q}{p}\bigg)\bigg),
\end{align}
and when $q\equiv (p-1) \text{ mod } p$ we have the formula
\begin{align}
N(q,p)=N(p-1,p)=-4 -\frac{12}{p}\sum_{j=1}^{p-1}cot^2(\frac{\pi}{p}j)+4\bigg(\bigg(\frac{1}{p}\bigg)\bigg)+4\bigg(\bigg(\frac{1}{p}\bigg)\bigg).
\end{align}
\end{theorem}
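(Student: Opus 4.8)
The plan is to peel off the "topologically trivial" part of what has already been computed: for $1<q<p-1$ formula \eqref{N(q,p)} gives
\begin{align*}
N(q,p)=-6+\frac{14}{p}\sum_{j=1}^{p-1}\cot(\tfrac{\pi}{p}j)\cot(\tfrac{\pi}{p}qj)-\frac{2}{p}\sum_{j=1}^{p-1}\cot(\tfrac{\pi}{p}j)\cot(\tfrac{\pi}{p}qj)\cos(\tfrac{2\pi}{p}j)\cos(\tfrac{2\pi}{p}qj),
\end{align*}
and I would write $\frac{14}{p}\sum\cot\cot-\frac{2}{p}\sum\cot\cot\cos\cos=\frac{12}{p}\sum\cot\cot+R$, where the remainder is
\begin{align*}
R=\frac{2}{p}\sum_{j=1}^{p-1}\cot(\tfrac{\pi}{p}j)\cot(\tfrac{\pi}{p}qj)\Big(1-\cos(\tfrac{2\pi}{p}j)\cos(\tfrac{2\pi}{p}qj)\Big),
\end{align*}
so the whole task is to show $R=-4\big(\big(\tfrac{q^{-1;p}}{p}\big)\big)-4\big(\big(\tfrac{q}{p}\big)\big)$.

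The key step is an elementary trigonometric identity: writing $A=\tfrac{2\pi}{p}j$ and $B=\tfrac{2\pi}{p}qj$, one has
\begin{align*}
\cot(\tfrac A2)\cot(\tfrac B2)\big(1-\cos A\cos B\big)=\tfrac12\sin A\,\cot(\tfrac B2)(1+\cos B)+\tfrac12\sin B\,\cot(\tfrac A2)(1+\cos A),
\end{align*}
which I would verify by clearing denominators: both sides equal $\tfrac{2\cos(A/2)\cos(B/2)}{\sin(A/2)\sin(B/2)}\big[\sin^2(\tfrac A2)\cos^2(\tfrac B2)+\cos^2(\tfrac A2)\sin^2(\tfrac B2)\big]$, and the bracket is $\tfrac12(1-\cos A\cos B)$ by applying $u^2+v^2=\tfrac12[(u+v)^2+(u-v)^2]$ to $u=\sin(\tfrac A2)\cos(\tfrac B2)$, $v=\cos(\tfrac A2)\sin(\tfrac B2)$. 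Combining this with the companion identity $\cot(\tfrac\theta2)(1+\cos\theta)=2\cot(\tfrac\theta2)-\sin\theta$ collapses $R$ into
\begin{align*}
R=\frac{2}{p}\sum_{j=1}^{p-1}\sin(\tfrac{2\pi}{p}j)\cot(\tfrac{\pi}{p}qj)+\frac{2}{p}\sum_{j=1}^{p-1}\sin(\tfrac{2\pi}{p}qj)\cot(\tfrac{\pi}{p}j)-\frac{2}{p}\sum_{j=1}^{p-1}\sin(\tfrac{2\pi}{p}j)\sin(\tfrac{2\pi}{p}qj).
\end{align*}
By Proposition \ref{fsp} (Eisenstein's formula) the second sum is $-4\big(\big(\tfrac qp\big)\big)$; for the first, since $\gcd(q,p)=1$ the substitution $m\equiv qj\bmod p$ is a bijection of $\{1,\dots,p-1\}$ under which $\sin(\tfrac{2\pi}{p}j)=\sin(\tfrac{2\pi}{p}q^{-1;p}m)$ and $\cot(\tfrac{\pi}{p}qj)=\cot(\tfrac{\pi}{p}m)$, so Proposition \ref{fsp} again gives $-4\big(\big(\tfrac{q^{-1;p}}{p}\big)\big)$; and the last sum, after writing $\sin(\tfrac{2\pi}{p}j)\sin(\tfrac{2\pi}{p}qj)=\tfrac12[\cos(\tfrac{2\pi}{p}(q-1)j)-\cos(\tfrac{2\pi}{p}(q+1)j)]$, vanishes by finite-Fourier orthogonality because neither $q-1$ nor $q+1$ is divisible by $p$ when $1<q<p-1$. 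This proves the first formula in the regime $1<q<p-1$.

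It remains to treat $q=1$ with $p>2$, and $q\equiv p-1\bmod p$ (which absorbs $q=1,p=2$, and for which $\Gamma_{(q,p)}=\Gamma_{(p-1,p)}$). Here the index has already been computed: \eqref{q=1} and \eqref{q=p-1} yield $N(1,p)=-2-\tfrac8p+\tfrac{12}{p}\sum\cot^2(\tfrac{\pi}{p}j)$ and $N(p-1,p)=-8+\tfrac8p-\tfrac{12}{p}\sum\cot^2(\tfrac{\pi}{p}j)$, and these match the claimed expressions once one substitutes the elementary evaluation $\big(\big(\tfrac1p\big)\big)=\tfrac1p-\tfrac12$, together with the observations that for $q=1$ one has $q^{-1;p}=1$, and for $q=p-1$ one has $q^{-1;p}=p-1$, $\cot(\tfrac{\pi}{p}(p-1)j)=-\cot(\tfrac{\pi}{p}j)$, and $\big(\big(\tfrac{p-1}{p}\big)\big)=-\big(\big(\tfrac1p\big)\big)$. (Alternatively, one can run a single uniform computation directly from \eqref{sich} and \eqref{Gamma-I} valid for all $q$ coprime to $p$; the only difference from the argument above is that the orthogonality sum contributes an extra constant equal to $2$ exactly when $p\mid q+1$, i.e.\ precisely in the excluded case $q\equiv p-1$.) The only genuinely non-routine ingredient is the key identity in the second paragraph — finding the splitting of $1-\cos A\cos B$ that reduces the Weyl-deformation-specific correction term to sums of the shape $\sum\sin\cdot\cot$ covered by Eisenstein's formula; everything after that is bookkeeping with Proposition \ref{fsp}, the change of variable $j\mapsto qj$, and orthogonality.
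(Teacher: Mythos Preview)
Your proof is correct and follows essentially the same route as the paper: both reduce $N(q,p)$ to $-6+\tfrac{12}{p}\sum\cot\cot$ plus the three sums $\tfrac{2}{p}\sum\sin(\tfrac{2\pi}{p}j)\cot(\tfrac{\pi}{p}qj)$, $\tfrac{2}{p}\sum\sin(\tfrac{2\pi}{p}qj)\cot(\tfrac{\pi}{p}j)$, and $-\tfrac{2}{p}\sum\sin(\tfrac{2\pi}{p}j)\sin(\tfrac{2\pi}{p}qj)$, then evaluate these via Eisenstein's formula (with the bijection $j\mapsto qj$) and orthogonality. The only cosmetic difference is that the paper obtains this decomposition by expanding $7-\cos(\tfrac{2\pi}{p}j)\cos(\tfrac{2\pi}{p}qj)$ directly with $\cos 2x=1-2\sin^2 x$ and simplifying $\cot\cdot 2\sin^2=\sin(2\cdot)$, whereas you first isolate the factor $1-\cos A\cos B$ and prove the packaged identity $\cot(\tfrac A2)\cot(\tfrac B2)(1-\cos A\cos B)=\sin A\cot(\tfrac B2)+\sin B\cot(\tfrac A2)-\sin A\sin B$; these are the same manipulation.
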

\begin{proof}
For the $q=p-1$ case, by examining the formulas in $\eqref{q=1}$ and $\eqref{q=p-1}$, one can easily see that we can also write $N(p-1,p)=-4p+4$ in this way.  Now, consider the $1\leq q<p$ case.  From \eqref{N(q,p)}, we begin by computing
\begin{align}
\begin{split}
N(q,p)&=-6+\frac{14}{p}\sum_{j=1}^{p-1}\Big[\cot(\frac{\pi}{p}j)\cot(\frac{\pi}{p}qj)\Big]\\
&\phantom{==}-\frac{2}{p}\sum_{j=1}^{p-1}\Big[\cot(\frac{\pi}{p}j)\cot(\frac{\pi}{p}qj)\cos(\frac{2\pi}{p}j)\cos(\frac{2\pi}{p}qj)\Big]\\
&=-6+\frac{2}{p}\sum_{j=1}^{p-1}\cot(\frac{\pi}{p}j)\cot(\frac{\pi}{p}qj)\Big[7-\cos(\frac{2\pi}{p}j)\cos(\frac{2\pi}{p}qj)\Big],
\end{split}
\end{align}
and using the identity $\cos (2x) = 1 - 2 \sin^2 (x)$ this expands to
\begin{align}
\begin{split}
&=-6+\frac{2}{p}\sum_{j=1}^{p-1}\cot(\frac{\pi}{p}j)\cot(\frac{\pi}{p}qj)\Big[7-(1-2\sin^2(\frac{\pi}{p}j))(1-2\sin^2(\frac{\pi}{p}qj))\Big]\\
&=-6+\frac{2}{p}\sum_{j=1}^{p-1}\cot(\frac{\pi}{p}j)\cot(\frac{\pi}{p}qj)\Big[6+2\sin^2(\frac{\pi}{p}j)+2\sin^2(\frac{\pi}{p}qj)\Big]\\
&\phantom{==}+\frac{2}{p}\sum_{j=1}^{p-1}\cot(\frac{\pi}{p}j)\cot(\frac{\pi}{p}qj)\Big[-4\sin^2(\frac{\pi}{p}j)\sin^2(\frac{\pi}{p}qj))\Big],
\end{split}
\end{align}
which simplifies to
\begin{align}
\begin{split}
\label{rhsf}
N(q,p)&=
-6+\frac{1}{p}\sum_{j=1}^{p-1}\Big[12\cot(\frac{\pi}{p}j)\cot(\frac{\pi}{p}qj)\Big]+\frac{1}{p}\sum_{j=1}^{p-1}\Big[2\sin(\frac{2\pi}{p}j)\cot(\frac{\pi}{p}qj)\Big]\\
&\phantom{==}+\frac{1}{p}\sum_{j=1}^{p-1}\Big[4\sin(\frac{\pi}{p}qj)\cos(\frac{\pi}{p}qj)\cot(\frac{\pi}{p}j)\Big]\\
&\phantom{==}-\frac{1}{p}\sum_{j=1}^{p-1}\Big[8\sin(\frac{\pi}{p}j)\cos(\frac{\pi}{p}j)\sin(\frac{\pi}{p}qj)\cos(\frac{\pi}{p}qj)\Big].
\end{split}
\end{align}
The fifth term on the right hand side of \eqref{rhsf} sums to zero because
\begin{align}
\begin{split}
\frac{-8}{p}\sum_{j=1}^{p-1}\sin(\frac{\pi}{p}j)\cos(\frac{\pi}{p}j)&\sin(\frac{\pi}{p}qj)\cos(\frac{\pi}{p}qj)=\frac{-4}{p}\sum_{j=1}^{p-1}\sin(\frac{2\pi}{p}j)\sin(\frac{2\pi}{p}qj)\\
&=\frac{-2}{p}\sum_{j=1}^{p-1}\Big[cos(\frac{2\pi}{p}(1-q)j)-cos(\frac{2\pi}{p}(1+q)j)\Big]
=0.
\end{split}
\end{align}
Using Proposition \ref{fsp},
the fourth term on the right hand side of \eqref{rhsf} is
\begin{align*}
\frac{4}{p} \sum_{j=1}^{p-1} \sin(\frac{\pi}{p}qj)\cos(\frac{\pi}{p}qj)\cot(\frac{\pi}{p}j)
&=\frac{2}{p} \sum_{j=1}^{p-1} \sin(2 \frac{\pi}{p}qj)\cot(\frac{\pi}{p}j) 
= -4 \bigg(\bigg( \frac{q}{p}\bigg)\bigg),
\end{align*}
and the third term on the right hand side of \eqref{rhsf} is
\begin{align*}
\frac{2}{p}\sum_{j=1}^{p-1}\sin(\frac{2\pi}{p}j)\cot(\frac{\pi}{p}qj)&=\frac{2}{p}\sum_{j=1}^{p-1}\sin(\frac{2\pi}{p}qq^{-1;p}j)\cot(\frac{\pi}{p}qj)\\
&=\frac{2}{p}\sum_{r=1}^{p-1}\sin(\frac{2\pi}{p}q^{-1;p}r)\cot(\frac{\pi}{p}r)
=-4\bigg(\bigg(\frac{q^{-1;p}}{p}\bigg)\bigg),
\end{align*}
where $r=jq^{-1;p}$, and this finishes the proof. 
\end{proof}
Since the formulas for $N(q,p)$ given in Theorem $\ref{N-nonexceptional}$ are the same in all cases except for when
$q = p-1$, we make the following definition:

\begin{definition}
\label{d-exc} {\em{
A singularity is said to be {\em{exceptional}}
if it results from a $(p-1,p)$-action. Otherwise, it is called
{\em{non-exceptional}}.
}}
\end{definition}

\section{Explicit formula for $N(q,p)$}
\label{explicit}
We begin this section by proving reciprocity formulas for the individual summands of $N(q,p)$.  Then, we use these relations to prove reciprocity formulas for $N(q,p)$, which will later be used to compute $N(q,p)$ explicitly.  Since we have already computed $N(1,p)$, for the simplicity of presentation, we will assume that $q>1$ for the following.  To simplify notation we let $A(q,p) = 48 s(q,p)$, where $s(q,p)$ is the 
Dedekind sum defined in \eqref{Dedekind}.
\begin{proposition}
\label{reci}Writing $p = eq - a$,
the following reciprocity relations are satisfied:
\begin{enumerate}
\item $A(q,p)+A(p,q)=-12+4e-4\frac{a}{q}+4\frac{q}{p}+4\frac{1}{pq}$,
\item $-4\Big(\Big( \frac {q^{-1;p}}{p}\Big)\Big)-4\Big(\Big( \frac {p^{-1;q}}{q}\Big)\Big)=-\frac{4}{pq}$,
\item $-4\Big(\Big( \frac {q}{p}\Big)\Big)-4\Big(\Big( \frac {p}{q}\Big)\Big)=-4\frac{q}{p}+4\frac{a}{q}$.
\end{enumerate}
\end{proposition}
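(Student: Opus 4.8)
The plan is to establish the three identities, which are independent of one another, one at a time. For (1) I would invoke the classical Dedekind reciprocity law, which in the cotangent normalization of \eqref{Dedekind} states
\begin{align*}
s(q,p)+s(p,q)=-\frac14+\frac{1}{12}\Big(\frac{p}{q}+\frac{q}{p}+\frac{1}{pq}\Big);
\end{align*}
see \cite{Rademacher2}. Multiplying through by $48$ and using $A(q,p)=48\,s(q,p)$ gives $A(q,p)+A(p,q)=-12+\frac{4p}{q}+\frac{4q}{p}+\frac{4}{pq}$, and then substituting $p=eq-a$ (so that $\frac{p}{q}=e-\frac{a}{q}$) rewrites this as the right-hand side of (1). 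This step is thus just a citation together with bookkeeping.

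For (2) the key point is a Chinese Remainder Theorem argument. Set $N:=q\,q^{-1;p}+p\,p^{-1;q}$. Reducing modulo $p$ and modulo $q$ shows $N\equiv 1 \pmod{p}$ and $N\equiv 1 \pmod{q}$, hence $N\equiv 1 \pmod{pq}$. On the other hand, the normalizations $1\le q^{-1;p}\le p-1$ and $1\le p^{-1;q}\le q-1$ force $p+q\le N\le 2pq-p-q$, so (using $q>1$ and $q<p$, which make $(p-1)(q-1)\ge 2$) the only possibility is $N=pq+1$. Since $q^{-1;p}$ is a unit modulo $p$ it is coprime to $p$, whence $\frac{q^{-1;p}}{p}\in(0,1)\setminus\ZZ$ and $\big(\big(\frac{q^{-1;p}}{p}\big)\big)=\frac{q^{-1;p}}{p}-\frac12$, and likewise for $\frac{p^{-1;q}}{q}$. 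Adding the two sawtooth values and using $N=pq+1$ yields $\big(\big(\frac{q^{-1;p}}{p}\big)\big)+\big(\big(\frac{p^{-1;q}}{q}\big)\big)=\frac{N}{pq}-1=\frac{1}{pq}$, which is (2) after multiplying by $-4$.

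For (3) I would simply evaluate the two sawtooth functions. Since $1<q<p$ we have $\frac{q}{p}\in(0,1)$, so $\big(\big(\frac{q}{p}\big)\big)=\frac{q}{p}-\frac12$. For $\frac{p}{q}$, coprimality of $p$ and $q$ together with $q>1$ rules out $a=0$ in $p=eq-a$, so $1\le a\le q-1$ and hence $\frac{p}{q}=(e-1)+\frac{q-a}{q}$ with $\frac{q-a}{q}\in(0,1)$; therefore $\big(\big(\frac{p}{q}\big)\big)=\frac{q-a}{q}-\frac12$. Substituting both expressions into $-4\big(\big(\frac{q}{p}\big)\big)-4\big(\big(\frac{p}{q}\big)\big)$ and simplifying produces $-\frac{4q}{p}+\frac{4a}{q}$.

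The only genuinely delicate point I anticipate is the range-plus-CRT argument pinning down $N=pq+1$ in (2), together with the care needed for the integrality exceptions in the definition of $((\cdot))$---in particular verifying that $a\neq 0$ and that none of the arguments $\frac{q^{-1;p}}{p}$, $\frac{p^{-1;q}}{q}$, $\frac{q}{p}$, $\frac{p}{q}$ are integers. Everything else reduces to elementary substitution, so I expect the proof to be short.
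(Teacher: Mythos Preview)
Your proof is correct. Parts (1) and (3) are handled exactly as in the paper: (1) by citing Dedekind reciprocity and substituting $p/q=e-a/q$, and (3) by evaluating the two sawtooth functions directly using $0<q<p$ and $p\equiv -a\pmod q$.

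For (2) your route differs from the paper's. The paper invokes Proposition~\ref{nt} (the identities $p^{-1;q}=q-a^{-1;q}$ and $q\,q^{-1;p}=1+a^{-1;q}p$, coming from the modified Euclidean step $p=eq-a$) to reduce both sawtooth arguments to expressions in $a^{-1;q}$, after which the $a^{-1;q}$-terms cancel and only $-4/(pq)$ remains. You instead set $N=q\,q^{-1;p}+p\,p^{-1;q}$, observe via CRT that $N\equiv 1\pmod{pq}$, and use the standard range $1\le q^{-1;p}\le p-1$, $1\le p^{-1;q}\le q-1$ together with $(p-1)(q-1)\ge 2$ to pin down $N=pq+1$. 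This is cleaner and self-contained: it avoids the forward reference to Proposition~\ref{nt} and does not involve the intermediate quantity $a^{-1;q}$ at all. The paper's approach, on the other hand, stays within the language of the modified Euclidean algorithm that drives the later recursion, which is why $a$ and its inverse are brought in. Either way the computation is short; your CRT argument is a perfectly good (arguably nicer) substitute.
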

\begin{proof}
By the reciprocity formula for Dedekind sums \cite{Rademacher2}, 
we have that
\begin{align}
\begin{split}
A(q,p)+A(p,q)&=-12+4\Big(\frac{p}{q}+\frac{q}{p}+\frac{1}{pq}\Big)\\
&=-12+4\Big(e-\frac{a}{q}+\frac{q}{p}+\frac{1}{pq}\Big)=-12+4e-4\frac{a}{q}+4\frac{q}{p}+4\frac{1}{pq}.
\end{split}
\end{align}
Next, we have that
\begin{align}
\begin{split}
-4\bigg(\bigg( \frac {q^{-1;p}}{p}\bigg)\bigg)&-4\bigg(\bigg( \frac {p^{-1;q}}{q}\bigg)\bigg)=\Big(-4\frac{q^{-1;p}}{p}+2\Big)+\Big(4\frac{a^{-1;q}}{q}-2\Big)\\
&=-4\frac{q^{-1;p}}{p}+4\frac{a^{-1;q}}{q}=4\frac{-qq^{-1;p}+a^{-1;q}p}{pq}.
\end{split}
\end{align}
Next, using that $q^{-1;p}q=1+ a^{-1;q} p$ (see Proposition \ref{nt}), we have
\begin{align}
\begin{split}
 -4\bigg(\bigg( \frac {q^{-1;p}}{p}\bigg)\bigg)&-4\bigg(\bigg( \frac {p^{-1;q}}{q}\bigg)\bigg)=4\frac{-qq^{-1;p}+a^{-1;q}p}{pq}\\
 &=4\frac{-(1+\alpha p)+a^{-1;q}p}{pq}=-\frac{4}{pq}.
 \end{split}
 \end{align}
Finally, we have
\begin{align}
\begin{split}
-4\bigg(\bigg( \frac {q}{p}\bigg)\bigg)-4\bigg(\bigg( \frac {p}{q}\bigg)\bigg)&=\Big(-4\frac{q}{p}+2\Big)+\Big(4\frac{a}{q}-2\Big)=-4\frac{q}{p}+4\frac{a}{q}.
\end{split}
\end{align}
\end{proof}
Next, we will prove useful reciprocity formulas for $N(q,p)$.  Denote
\begin{align}
\begin{split}
&R^+(q,p)=N(q,p)+N(p,q)\\
&R^-(q,p)=N(-q,p)+N(-p,q).
\end{split}
\end{align}
\begin{proposition} Writing $p=eq-a$ with $0<a<q$,
we have the following formulas:
\label{plus,minus}
\begin{align}
R^+(q,p)=
\begin{cases}
-4 &\text{ when   } q=1 \text{ and } p=2\\
-14 &\text{ when $1<q=p-1$}\\
4p-14 &\text{ when $1=q<p-1$}\\
4e-22 &\text{ when   } p=eq-1\\
4e-24 &\text{  when $2\leq a \leq q-1$},
\end{cases}
\end{align}
and
\begin{align}
R^-(q,p) =
\begin{cases}
-4 &\text{ when   } q=1 \text{ and } p=2\\
-6 &\text{ when $1<q=p-1$}\\
-4p+4 &\text{ when $1=q<p-1$}\\
-4e+2 &\text{  when $p=eq-(q-1)$ and $1<q<p-1$}\\
 -4e &\text{  when $1\leq a \leq q-2$ and $2<q$}.
\end{cases}
\end{align}
\end{proposition}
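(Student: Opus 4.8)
The plan is to evaluate each $N$ occurring in $R^{\pm}(q,p)$ by means of Theorem~\ref{N-nonexceptional}, reduce second arguments modulo first arguments using the periodicity of $N$ (and of the Dedekind sum, hence of $A$) in the first slot, and then collapse the result with the three reciprocity identities of Proposition~\ref{reci}. I would rely on the elementary facts that $((-x))=-((x))$, that $(-x)^{-1;m}\equiv-x^{-1;m}\pmod m$, that $A(p-q,p)=-A(q,p)$ since $\cot\!\big(\tfrac{\pi}{p}(p-q)j\big)=-\cot\!\big(\tfrac{\pi}{p}qj\big)$, and the closed form $\tfrac1{4p}\sum_{j=1}^{p-1}\cot^2(\tfrac{\pi}{p}j)=\tfrac{(p-1)(p-2)}{12p}$ for evaluating the reduced Dedekind sums that appear. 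A pair $\Gamma_{(x,m)}$ is exceptional exactly when $x\equiv m-1\pmod m$; there one must use the value from \eqref{etc}, which differs by a constant from the naive continuation of the generic formula of Theorem~\ref{N-nonexceptional}.

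I would first treat the fully non-exceptional case, namely $1<q$, $q\ne p-1$, and $a\ge2$ (so that $(p,q)$ is non-exceptional as well). Applying the first formula of Theorem~\ref{N-nonexceptional} to $(q,p)$ and to $(p,q)$ and adding gives
\begin{align*}
R^+(q,p)=-12+\big(A(q,p)+A(p,q)\big)-4\Big(\Big(\tfrac{q^{-1;p}}{p}\Big)\Big)-4\Big(\Big(\tfrac{p^{-1;q}}{q}\Big)\Big)-4\Big(\Big(\tfrac{q}{p}\Big)\Big)-4\Big(\Big(\tfrac{p}{q}\Big)\Big),
\end{align*}
and the three identities of Proposition~\ref{reci} cancel every fractional term, leaving $R^+(q,p)=4e-24$. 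For $R^-(q,p)=N(-q,p)+N(-p,q)$ (now requiring in addition $a\le q-2$, so $(-p,q)$ is non-exceptional) I would apply the same formula to $(-q,p)$ and $(-p,q)$; using $((-x))=-((x))$, $(-x)^{-1;m}\equiv-x^{-1;m}$ and $A(-q,p)=-A(q,p)$, every term flips sign, yielding $R^-(q,p)=-24-R^+(q,p)=-4e$ — equivalently this records the identity $N(x,m)+N(-x,m)=-12$ valid for $1<x<m-1$.

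Next come the exceptional subfamilies, in which one of the two $N$'s is instead read from \eqref{etc} as $N(m-1,m)=-4m+4$: the case $q=p-1$ (the pair $(q,p)$ itself is a $(p-1,p)$-action); the case $a=1$, i.e.\ $p=eq-1$ (so $(p,q)=(q-1,q)$ is exceptional, relevant for $R^+$); and the case $a=q-1$, i.e.\ $p=(e-1)q+1$ (so $(-p,q)=(q-1,q)$ is exceptional, relevant for $R^-$). In each, one $N$ comes from \eqref{etc} and the other from the generic formula of Theorem~\ref{N-nonexceptional}; the generic one still carries a fractional tail, but it vanishes identically because of the defining relation among $p,q,e$. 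For example, when $p=eq-1$ one finds $N(q,p)=4e+4q-26+\tfrac4q+\tfrac4{pq}-\tfrac{4e}{p}$, and since $\tfrac4q+\tfrac4{pq}-\tfrac{4e}{p}=\tfrac{4(p+1-eq)}{pq}=0$ this is $4e+4q-26$, so with $N(p,q)=N(q-1,q)=-4q+4$ one gets $R^+(q,p)=4e-22$; the cases $q=p-1$ (giving $R^+=-14$, $R^-=-6$) and $p=(e-1)q+1$ (giving $R^-=-4e+2$) are settled by the same cancellation. Finally the genuinely degenerate cases $q=1$ (use the known $N(1,p)$ from \eqref{etc} and $N(\,\cdot\,,1)=0$) and $q=1$, $p=2$ are checked by direct substitution.

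The step I expect to be the main obstacle is the bookkeeping of exceptionality rather than any single calculation: for each of the two pairs entering $R^{\pm}(q,p)$ one must decide whether it is an $(m-1,m)$-action, since the exceptional value from \eqref{etc} differs by a constant from the naive continuation of the generic formula, and in the smallest cases (for instance $q=2$, or $p=3$) both pairs can be exceptional at once, so the cases in the statement must be read with the listed priorities. Keeping aligned the modular reductions, the sign changes of the sawtooth and Dedekind-sum terms under $q\mapsto p-q$ and $p\mapsto p\bmod q$, and the precise way $p=eq-a$ forces the leftover fractions to vanish, is the delicate part; the per-case algebra is otherwise routine given Theorem~\ref{N-nonexceptional} and Proposition~\ref{reci}.
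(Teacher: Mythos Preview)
Your proposal is correct and follows essentially the same route as the paper: apply Theorem~\ref{N-nonexceptional} to each summand of $R^{\pm}(q,p)$ and collapse the fractional terms using the three identities of Proposition~\ref{reci}. The only organizational difference is that the paper packages the exceptional/non-exceptional distinction into the constant $C_{(\alpha,\beta)}\in\{-6,-4\}$ and derives the single formula $R^+(q,p)=4e+C_{(q,p)}+C_{(p,q)}-12$ (and similarly for $R^-$), reading off the case split from the values of $C$; you instead substitute the closed form $N(m-1,m)=-4m+4$ from \eqref{etc} in the exceptional subfamilies and verify the leftover fractions cancel case by case. Both arguments are the same in substance.
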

\begin{proof}
The first three formulas for both $R^+(q,p)$ and $R^-(q,p)$ are easily computable from the cases where $N(q,p)$ is easy to compute.  
Denote by $C_{(\alpha,\beta)}$ the constant term in $N(\alpha,\beta)$, so
\begin{align}
\label{cab}
C_{(\alpha,\beta)}=
\begin{cases}
-6 &\text{   for a non-exceptional singularity}\\
-4 &\text{   for an exceptional singularity}.
\end{cases}
\end{align}
For the case when $p=eq-a$, where $1\leq a <q-1$, we have that
\begin{align*}
R^+(q,p)=N(q,p)+N(p,q)&=\bigg[C_{(q,p)}+A(q,p)-4\bigg(\bigg(\frac{q^{-1;p}}{p}\bigg)\bigg) -4\bigg(\bigg(\frac{q}{p}\bigg)\bigg)\bigg]\\
&\phantom{==}+\bigg[C_{(p,q)}+A(p,q)-4\bigg(\bigg(\frac{p^{-1;q}}{q}\bigg)\bigg)-4\bigg(\bigg(\frac{p}{q}\bigg)\bigg)\bigg].
\end{align*}
Then, by Proposition $\ref{reci}$, we see that
\begin{align*}
R^+(q,p)&=C_{(q,p)}+C_{(p,q)}+\Big[-12+4e-4\frac{a}{q}+4\frac{q}{p}+\frac{4}{pq}\Big]+\Big[-\frac{4}{pq}-4\frac{q}{p}+4\frac{a}{q}\Big]\\
&=4e+C_{(q,p)}+C_{(p,q)}-12,
\end{align*}
which proves the reciprocity formulas in each respective case.
The proof for $R^-(q,p)$ is similar and is omitted. 
\end{proof}
We next use the above reciprocity relations 
to recursively compute an explicit formula for $N(q,p)$:
\begin{theorem}For $q$ and  $p$ and relatively prime, we have 
\label{N=}
\begin{align}
\label{nqpform}
N(q,p) =
\begin{cases}
\displaystyle \sum_{i=1}^{k}4e_i-12k-2  &\text{   when $q\not\equiv (p-1)$ mod $p$} \\
\displaystyle \sum_{i=1}^{k}4e_i-12k=-4p+4 &\text{   when $q\equiv (p-1)$ mod $p$},
\end{cases}
\end{align}
where $k$ and $e_i$, $1 \leq i \leq k$, were defined 
above in the modified Euclidean algorithm~\eqref{mea}.
\end{theorem}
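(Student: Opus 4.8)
The plan is to prove the formula \eqref{nqpform} by induction on the length $k$ of the modified Euclidean algorithm, using the reciprocity relations established in Proposition~\ref{plus,minus}. The key observation is that the modified Euclidean algorithm \eqref{mea} relates the pair $(q,p)$ to the pair $(a_1, q)$ via $p = e_1 q - a_1$, and that this is precisely the situation covered by the formula for $R^+(q,p)$. Since continued fraction expansions via the modified Euclidean algorithm behave well under this operation—passing from $p/q$ to $q/a_1$ simply drops $e_1$ and shortens the length by one—the recursion $N(q,p) = R^+(q,p) - N(p,q)$ should close up cleanly, provided we are careful about which residue $p$ represents modulo $q$ (i.e. which branch of the $R^+$ formula applies) and about the exceptional versus non-exceptional dichotomy.

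First I would set up the base cases: when $k = 1$ we have $p = e_1 q - 1$, so $a_{k-1}$ in \eqref{mea} equals $1$ immediately, and here $N(p,q) = N(p,1)$, which is trivially $0$ (the group is trivial). Then Proposition~\ref{plus,minus} with $p = e_1 q - 1$ gives $R^+(q,p) = 4e_1 - 22$ when $q > 1$, so $N(q,p) = 4e_1 - 22 = \sum_{i=1}^{1} 4e_i - 12\cdot 1 - 2 \cdot 5$—wait, let me recompute: $4e_1 - 12 - 2 = 4e_1 - 14$, which does not match $4e_1 - 22$. The resolution is that the base of the recursion must instead be taken at the pair where $q$ becomes $1$, not where it becomes small; so I would instead unwind until reaching $N(1, p')$ or $N(p'-1, p')$, using the known formulas \eqref{etc}, and track the telescoping sum of the $4e_i$ terms and the constants. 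The careful bookkeeping of constants $C_{(\alpha,\beta)}$—which equal $-6$ generically but $-4$ in the exceptional case—across all the recursion steps is the crux.

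Concretely, the induction step is: assuming the formula holds for all pairs with modified Euclidean length less than $k$, apply it to the pair $(a_1, q)$ whose expansion is $(e_2, \ldots, e_k)$ of length $k-1$. If $(a_1,q)$ is non-exceptional then $N(a_1, q) = \sum_{i=2}^k 4e_i - 12(k-1) - 2$; but I need $N(p, q) = N(q - a_1', q)$ for the appropriate $a_1'$, so I must invoke Remark~\ref{actrem} (that $N(q,p)$ depends only on the conjugacy class, i.e. on $q \bmod p$ and on orientation) to replace $N(p,q)$ by $N(a_1, q)$ or by $N(q-a_1,q)$. Then $N(q,p) = R^+(q,p) - N(p,q)$, and plugging in $R^+(q,p) = 4e_1 + C_{(q,p)} + C_{(p,q)} - 12$ from Proposition~\ref{plus,minus}, together with the inductive value of $N(p,q)$, should yield exactly $\sum_{i=1}^k 4e_i - 12k - 2$ after the constants $C_{(q,p)}, C_{(p,q)}$ and the $-12$ recombine correctly. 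The exceptional case $q \equiv p-1$ is handled separately using the already-established identity $N(p-1,p) = -4p+4$ and the fact that $k = p-1$, $e_i = 2$ there.

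The main obstacle I anticipate is the bookkeeping around the exceptional/non-exceptional dichotomy and the orientation issue: $R^+$ involves $N(p,q)$, but in the recursion $p > q$ so $N(p,q)$ really means $N(p \bmod q, q)$, and one must verify that reducing $p$ modulo $q$ in the argument does not change which branch of \eqref{mea} one is in, and does not spuriously create or destroy an exceptional singularity. One must check, for instance, that if $(q,p)$ is non-exceptional then the pair appearing at the next stage is non-exceptional unless we have reached the final step, and that the constant terms telescope so that only a single net $-2$ (the $-6$ from the last non-exceptional constant, offset by a $+4$ accumulated from the chain) survives. Getting the terminal case exactly right—reconciling the $q=1$ formula $N(1,p) = 4p - 14$ with the general formula (which, as the statement notes via $\sum 4e_i - 12k - 2 = 4p-14$ when $q=1$, requires $\sum e_i = p + 2k - 3$, a known identity for the continued fraction of $p/1$... actually $p/1$ has $k=p-1$, $e_i = 2$, giving $\sum 4e_i - 12k - 2 = 8(p-1) - 12(p-1) - 2 = -4(p-1) - 2 = -4p+2 \neq 4p-14$)—this discrepancy signals that the continued fraction of $p/1$ is not what one naively writes, and pinning down the correct terminal expansion and its length is where I expect to spend the most care.
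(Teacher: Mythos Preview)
Your plan has a genuine gap at the core recursion step. You want to use only $R^+$ and induction on the length $k$, writing $N(q,p)=R^+(q,p)-N(p,q)$ and then applying the inductive hypothesis to $N(p,q)$. The problem is that $p\equiv -a_1\pmod q$ (since $p=e_1q-a_1$), so $N(p,q)=N(-a_1,q)=N(q-a_1,q)$, \emph{not} $N(a_1,q)$. Remark~\ref{actrem} does not let you swap $a_1$ for $-a_1$: the relevant symmetry of $N$ is $N(q,p)=N(q^{-1;p},p)$ (orientation-preserving conjugacy), and passing from $q$ to $-q$ is an orientation reversal, under which $N$ genuinely changes (e.g.\ $N(1,p)=4p-14$ versus $N(p-1,p)=-4p+4$). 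Consequently the continued fraction you would need for the inductive step is that of $q/(q-a_1)$, which is \emph{not} $[e_2;e_3,\dots,e_k]$, and the recursion does not close up.

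The paper's proof handles exactly this obstacle by alternating $R^+$ and $R^-$: from $N(q,p)=R^+(q,p)-N(-a_1,q)$ one applies $R^-(a_1,q)=N(-a_1,q)+N(-q,a_1)$, noting that $-q\equiv a_2\pmod{a_1}$ (because $q=e_2a_1-a_2$), so $N(-q,a_1)=N(a_2,a_1)$ and one is back to a ``positive'' situation with the tail $[e_3;\dots,e_k]$. Two steps of this alternation consume $e_1,e_2$ and reduce the length by two. The constants $4e_i-24$ (from $R^+$) and $-4e_i$ (from $-R^-$) combine to give exactly the $4e_i-12$ per step in the claimed formula, with boundary corrections coming from the terminal cases of Proposition~\ref{plus,minus}. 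Your confusion at the base case (the $4e_1-22$ versus $4e_1-14$ mismatch, and the erroneous computation of the expansion of $p/1$ as having $k=p-1$ with all $e_i=2$; that is the expansion of $p/(p-1)$, while $p/1$ has $k=1$, $e_1=p$) is a symptom of the same issue: without bringing $R^-$ into the recursion, the constants cannot be made to match.
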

\begin{proof}
We have already proved the second case in $\eqref{etc}$, and we will now prove the first case, so we only need consider $q\not\equiv (p-1) \text{ mod } p$.  Since our formulas 
only depend upon $q$ mod $p$, we can assume that $1\leq q<p-1$.  We begin by using 
Proposition \ref{plus,minus} to compute $N(q,p)$ as follows:
\begin{align*}
N(q,p)&=R^+(q,p)-N(p,q)\\
&=R^+(q,p)-N(e_1q-a_1,q)\\
&=R^+(q,p)-N(-a_1,q)\\
&=R^+(q,p)-N(-a_1,q)-N(-q,a_1)+N(-q,a_1)\\
&=R^+(q,p)-R^-(a_1,q)+N(a_2,a_1)+N(a_1,a_2)-N(a_1,a_2)\\
&=R^+(q,p)-R^-(a_1,q)+R^+(a_2,a_1)-N(-a_3,a_2).
\end{align*}
Continuing this iteratively, we arrive at the formula
\begin{align*}
N(q,p)
&=\sum_{i=1}^{r+1}4e_i-24\Big\lceil \frac{r+1}{2} \Big\rceil \\
&\phantom{==}+\Big[(-1)^{r+1}R^{(-1)^{r+1}}(a_{r+1},a_r)+(-1)^{r+2}N((-1)^{r+2}a_{r+2},a_{r+1})\Big],
\end{align*}
where $a_r=e_{r+2}a_{r+1}-1$ or $a_r=e_{r+2}a_{r+1}-(a_{r+1}-1)$.  It is only necessary to consider the four following cases:
\begin{enumerate}
\item When $r+2$  is even and $a_{r+2}=1$:\\ 
\phantom{======}$N(q,p)=\displaystyle        \sum_{i=1}^{r+3}4e_i-12(r+2)-14$.
\item When $r+2$  is odd and $a_{r+2}=1$:\\
\phantom{======} $N(q,p)= \displaystyle       \sum_{i=1}^{r+3}4e_i-12(r+1)-26$.
\item When $r+2$  is even and $a_{r+2}=a_{r+1}-1$:\\
 \phantom{======}$N(q,p)= \displaystyle        \sum_{i=1}^{r+2}4e_i-4a_{r+1}-12(r+2)+2$.
\item When $r+2$  is odd and $a_{r+2}=a_{r+1}-1$:\\
\phantom{======} $N(q,p)= \displaystyle        \sum_{i=1}^{r+2}4e_i-4a_{r+1}-12(r+1)-10$.
\end{enumerate}
The formulas for $N(q,p)$ in each case are a direct consequence of formula $\eqref{etc}$ and Proposition $\ref{plus,minus}$.
In case (1) and case (2), $k=r+3$.  So written in terms of $k$ we have
\begin{align}
N(q,p) =  \sum_{i=1}^{k}4e_i-12k-2,
\end{align}
for both cases.  Now, in case (3), $k=(a_{r+1}-1)+(r+2)$ and $e_i=2$ for $i\geq r+3$.  Therefore we can check that 
\begin{align}
\begin{split}
\sum_{i=1}^{k}4e_i-12k-2&=\bigg[\sum_{i=1}^{r+2}4e_i-12(r+2)\bigg]+\bigg[\sum_{i=r+3}^{k}4e_i-12(a_{r+1}-1)-2\bigg]\\
&=\sum_{i=1}^{r+2}4e_i-12(r+2)-4a_{r+1}+2=N(q,p).
\end{split}
\end{align}
Finally, in case (4), $k=(a_{r+1}-1)+(r+2)$ and $e_i=2$ for $i\geq r+3$,
and the result holds similarly.
\end{proof}
Theorem \ref{mainthm} is then a trivial consequence of Theorem \ref{N=}
and \eqref{indn}.

\begin{remark}{\em
Ashikaga and Ishizaka prove a recursive formula for the 
Dedekind sum in  \cite[Theorem 1.1]{AshiIshi}, which is 
equivalent to Theorem \ref{N=}. However, our proof is more elementary 
and relies only on the reciprocity law for Dedekind sums. 
We will also need to use 
Proposition \ref{plus,minus} below in Section \ref{wpssec}. 
}
\end{remark}

\section{Index on Calderbank-Singer spaces }
\label{CSindex}
In this section, we prove the results regarding the Calderbank-Singer metrics.
Let $k$ and $k'$ be the lengths of the modified Euclidean algorithm for $(q,p)$ and $(p-q,p)$ respectively.
\begin{proof}[Proof of Theorem \ref{CSthm}]
It follows from \eqref{intersect} that
the compactified Calderbank-Singer space $(\hat{X}, \hat{g})$
satisfies
$\tau_{top}(\hat{X})=-k$ and $\chi_{top}(\hat{X})=k+2$,
so for a $(p-q,p)$-action when $q\neq 1$, the index is
\begin{align*}
Ind(\hat{X}, \hat{g})&=\frac{1}{2}(15\chi_{top}+29\tau_{top})+N(q,p)
=[-7k+15]+\Big[\sum_{i=1}^{k'}4e_i'-12k'-2\Big].
\end{align*}
We next use a 4-dimensional $(q,p)$-football, denoted by $S^4_{(q,p)}$, to relate
$k$ and $k'$. This is defined using the $\Gamma_{(p,q)}$ action, 
acting as rotations around $x_5$-axis:
\begin{align}
S^4_{(q,p)}=S^4/\Gamma_{(q,p)}.
\end{align}
This quotient is an orbifold with two singular points,
one of $(q,p)$-type, and the other of $(-q,p)$-type.
Since $\chi_{top}(S^4_{(q,p)})=2$ and $\tau_{top}(S^4_{(q,p)})=0$,
the index of \eqref{thecomplex} on $S^4_{(q,p)}$ with the round metric $g_S$ is
\begin{align}
Ind(S^4_{(q,p)},g_S)=
3 \text{   for $1<q<p-1$}.
\end{align}
Using the formula
\begin{align}
Ind(S^4_{(q,p)},g_S)=\frac{1}{2}(15\chi_{top}+29\tau_{top})+N(q,p)+N(-q,p),
\end{align}
and Theorem \ref{N=}, we have
\begin{align}
\begin{split}
-12&=N(q,p)+N(-q,p)=N(q,p)+N(p-q,p)\\
&=\Big [\sum_{i=1}^{k}4e_i-12k-2\Big]+\Big[\sum_{i=1}^{k'}4e_i'-12k'-2\Big],
\end{split}
\end{align}
which yields the formula
\begin{align}
k'=\frac{1}{12}\Big(8+\sum_{i=1}^{k}4e_i+\sum_{i=1}^{k'}4e_i'-12k\Big).
\end{align}
Then, substituting this for $k$ in $Ind(\hat{X}, \hat{g})$ gives
\begin{align}
\begin{split}
Ind(\hat{X}, \hat{g})&=[-7k+15]+\Big[\sum_{i=1}^{k'}4e_i'-\Big(8+\sum_{i=1}^{k}4e_i+\sum_{i=1}^{k'}4e_i'-12k
\Big)-2\Big]\\
&=5k+5-\sum_{i=1}^{k}4e_i.
\end{split}
\end{align}

Next, when $q=1$, we have $k =1$, so the index is
\begin{align}
Ind(\hat{X}, \hat{g}) =[-7k+15]+[-4p+4] =-4p+12.
\end{align}
\end{proof}

\begin{proof}[Proof of Theorem \ref{CScor}]
Calderbank-Singer showed that their toric metrics come in 
families of dimension $k-1$. It was proved 
by Dominic Wright that the moduli space of toric anti-self-dual 
metrics on the orbifolds are of dimension exactly $k -1$
\cite[Corollary 1.1]{Wright}. So as long as we show the moduli 
space is strictly larger than $k-1$, there must be non-toric
deformations.

The $(1,2)$ case is the Eguchi-Hanson metric which has no 
deformations. 
For $q=1$ and $p > 2$, the $(1,p)$ type Calderbank-Singer spaces are exactly 
the LeBrun negative mass metrics on $\mathcal{O}(-p)$ found in 
\cite{LeBrunnegative}. 
It was shown in \cite{HondaOn} for $p = 3$,
the moduli space of these metrics is of dimension $1$ so the 
result is true since $1 > 0 = k - 1$. For $p \geq 4$, 
by \cite[Theorem 1.9]{ViaclovskyIndex}, the moduli space has 
dimension at least $4p - 12 > 0$ (in fact the dimension is 
exactly $4p-12$, see \cite[Theorem~1.1]{HondaOn}).
So the result holds for $q = 1$ and $p \geq 3$. 
We also mention that \cite[Theorem~1.1]{HondaOn} determines
exactly the identity component of the automorphism groups of 
the deformations. 

 Next, assume that $q = p-1$. In this case, the metrics are
hyperk\"ahler, and correspond to toric multi-Eguchi-Hason metrics. 
In this case, the moduli space of all hyperk\"ahler metrics is known to be 
exactly of dimension $3(k-1)$. 

 Next, we assume that $1 < q < p-1$. 
As mentioned in the Introduction, 
from \cite[Theorem 4.2]{LeBrunMaskit}, we know that $\dim(H^2(\hat{X},\hat{g}))=0$.
Also, $\dim(H^0) = 2,$ since the metrics are toric and $q >1$. Therefore
\begin{align}
\dim(H^1)=-Ind(\hat{X}, \hat{g}) +\dim(H^0) = -Ind(\hat{X}, \hat{g}) +2.
\end{align}
When $q \neq 1$, we have that
\begin{align}
-Ind =-5k-5+\sum_{i=1}^{k}4e_i.
\end{align}
Since $e_i \geq 2 $ for all $i$ and since $q < p-1$, then 
$e_j \geq 3$ for some $j$, $1 \leq j \leq k$.  Therefore
\begin{align}
\dim(H^1)\geq 3k + 1.
\end{align}
The actual moduli space is locally isomorphic to $H^1 / H^0$, so it has dimension 
at least $3k - 1 > 3(k-1)$. 
\end{proof}

\section{Index on weighted projective spaces}
\label{wpssec}
In this section we will study the index of the complex \eqref{thecomplex} 
at the Bochner-K\"ahler metrics of Bryant {\em{with reversed orientation to make 
them anti-self-dual}}.
This reversal of orientation makes the orbifold points have 
orientation-reversing conjugate actions as follows:
\begin{enumerate}
\item Around [1,0,0] there is a $(-q^{-1;r}p,r)$-action.
\item Around [0,1,0] there is a $(-p^{-1;q}r,q)$-action.
\item Around [0,0,1] there is a $(-r^{-1;p}q,p)$-action.
\end{enumerate}

In the next subsection, we will present some elementary number theoretic 
propositions that we will use throughout our computations.
After that,  we will prove 
crucial reciprocity laws for sawtooth functions relating $r, q$ and $p$ 
and then employ these to prove our main formula for the index.  
Finally, we use this formula to prove Theorem~\ref{wpsthm}.

\subsection{Elementary number theoretic preliminaries}

Recall that for two relatively prime positive integers $1<\alpha < \beta$, that we denote $\alpha$'s inverse modulo $\beta$ by $\alpha^{-1;\beta}$, and $\beta$'s inverse modulo $\alpha$ by $\beta^{-1;\alpha}$.  Since $\alpha < \beta$ we can write
\begin{align}
\beta=e\alpha-a,
\end{align}
where $e$ and $a$ are positive integers with $a<\alpha$.  Then we have the following proposition:
\begin{proposition}
\label{nt}
We have the following identities:
\begin{enumerate}
\item $\beta^{-1;\alpha}=\alpha-a^{-1;\alpha}$
\item $\alpha \alpha^{-1;\beta}=1+a^{-1;\alpha}\beta$.
\end{enumerate}
\end{proposition}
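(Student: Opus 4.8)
The plan is to reduce both identities to elementary congruence bookkeeping, taking care about which residue representatives are used. First I would record the two facts that drive everything: from $\beta = e\alpha - a$ we have $\beta \equiv -a \pmod{\alpha}$; and since any common divisor of $a$ and $\alpha$ divides $\beta = e\alpha - a$ and hence divides $\gcd(\alpha,\beta) = 1$, the integer $a$ is invertible modulo $\alpha$. Also $a \geq 1$, since $a = 0$ would give $\alpha \mid \beta$, contradicting $\gcd(\alpha,\beta)=1$ together with $\alpha > 1$; thus $a^{-1;\alpha}$ is a well-defined element of $\{1, \dots, \alpha - 1\}$, and likewise $\alpha^{-1;\beta} \in \{1,\dots,\beta-1\}$.

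For part (1): from $\beta \equiv -a \pmod{\alpha}$ we get $\beta^{-1} \equiv -a^{-1} \equiv \alpha - a^{-1;\alpha} \pmod{\alpha}$. Since $a^{-1;\alpha} \in \{1,\dots,\alpha-1\}$, the integer $\alpha - a^{-1;\alpha}$ again lies in $\{1,\dots,\alpha-1\}$, so it is exactly the canonical inverse $\beta^{-1;\alpha}$, proving (1).

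For part (2): I would establish three facts about the integer $n := 1 + a^{-1;\alpha}\beta$, namely that $\alpha \mid n$, that $m := n/\alpha$ satisfies $1 \leq m \leq \beta - 1$, and that $m \equiv \alpha^{-1} \pmod{\beta}$; together these force $m = \alpha^{-1;\beta}$, i.e. $\alpha\alpha^{-1;\beta} = n = 1 + a^{-1;\alpha}\beta$. Divisibility follows from $n \equiv 1 + a^{-1;\alpha}(-a) \equiv 1 - 1 = 0 \pmod{\alpha}$. The congruence mod $\beta$ is immediate: $\alpha m = n \equiv 1 \pmod{\beta}$, so $m \equiv \alpha^{-1} \pmod{\beta}$. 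For the range, $n$ is a positive multiple of $\alpha$, so $m \geq 1$; and since $a^{-1;\alpha} \leq \alpha - 1$, we get $m = n/\alpha \leq \bigl(1 + (\alpha-1)\beta\bigr)/\alpha = \beta - (\beta-1)/\alpha < \beta$, hence $m \leq \beta - 1$.

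I do not expect a genuine obstacle here; the only point demanding care is tracking the normalization of the modular inverses so that the stated relations hold on the nose as integer identities rather than merely as congruences. If one prefers, part (2) can instead be deduced from part (1) by observing that $\alpha\alpha^{-1;\beta}$ and $1 + a^{-1;\alpha}\beta$ are both $\equiv 0 \pmod{\alpha}$ and both $\equiv 1 \pmod{\beta}$, hence agree modulo $\alpha\beta$, and then pinning the common value down using the size estimate above (both quantities lie strictly between $0$ and $\alpha\beta$); I would present whichever version is shorter.
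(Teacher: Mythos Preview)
Your proof is correct and follows essentially the same approach as the paper. For part (1) both arguments amount to checking $\beta(\alpha-a^{-1;\alpha})\equiv 1\pmod{\alpha}$ together with the range constraint; for part (2) the paper writes $\alpha\alpha^{-1;\beta}=1+X\beta$ and shows $X=a^{-1;\alpha}$ via $aX\equiv 1\pmod{\alpha}$ and $X<\alpha$, which is the same verification you carry out in the reverse direction by starting from $n=1+a^{-1;\alpha}\beta$ and showing $n/\alpha=\alpha^{-1;\beta}$.
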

\begin{proof}
To prove the first identity, recall that $\beta=e\alpha-a$ so
\begin{align*}
\beta (\alpha-a^{-1;\alpha})&=(e\alpha-a)(\alpha-a^{-1;\alpha})
=e\alpha^2-e\alpha a^{-1;\alpha}-a\alpha+aa^{-1;\alpha}\equiv 1 \mod \alpha.
\end{align*}
This proves the first identity because $\alpha-a^{-1;\alpha}<\alpha$ and the multiplicative inverses are unique.  

To prove second identity we first write
\begin{align}
\alpha\alpha^{-1;\beta}=1+X\beta.
\end{align}
Since $1<\alpha$ we know $X$ must be a positive integer.  We can then solve for
\begin{align}
\beta=\frac{\alpha \alpha^{-1;\beta}-1}{X}.
\end{align}
Therefore $\beta=\frac{\alpha \alpha^{-1;\beta}-1}{X}=e\alpha-a$, so
\begin{align}
\alpha\alpha^{-1;\beta}-1=e\alpha X-aX,
\end{align}
from which we see that
\begin{align}
aX=\alpha(eX-\alpha^{-1;\beta})+1,
\end{align}
so $aX\equiv \text{1 mod $\alpha$}$.  This proves the second identity because $X=\frac{\alpha \alpha^{-1;\beta}-1}{\beta}<\alpha$ and multiplicative inverses are unique.
\end{proof}
For convenience, we define the fractional part of $x$ by 
\begin{align}
\{ x \} = x - \lfloor x \rfloor.
\end{align}
We will use the following proposition extensively in the next section, 
the proof is elementary:
\begin{proposition}
\label{saw}
For any real $\alpha$ and $\beta$, both non-integral,
\begin{align}
((\alpha+\beta))=
\begin{cases}
 ((\alpha))+((\beta))+\frac{1}{2} &\text{   when $\{\alpha\}+\{\beta\}<1$}\\
 ((\alpha))+((\beta))-\frac{1}{2} &\text{   when $\{\alpha\}+\{\beta\}>1$}\\
 0 &\text{   when $\{\alpha\}+\{\beta\}=1$}.
 \end{cases}
\end{align}
\end{proposition}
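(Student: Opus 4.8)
The plan is to derive the identity directly from the closed form $((x)) = \{x\} - \frac{1}{2}$, valid for $x \notin \ZZ$, together with the elementary additivity-with-carry behaviour of the fractional part. First I would record that for arbitrary reals $\alpha,\beta$ one has $\{\alpha+\beta\} = \{\alpha\}+\{\beta\}$ when $\{\alpha\}+\{\beta\} < 1$ and $\{\alpha+\beta\} = \{\alpha\}+\{\beta\}-1$ when $\{\alpha\}+\{\beta\} \geq 1$; both follow immediately from comparing $\lfloor\alpha+\beta\rfloor$ with $\lfloor\alpha\rfloor+\lfloor\beta\rfloor$. Since $\alpha,\beta\notin\ZZ$ by hypothesis, we also have $\{\alpha\},\{\beta\}\in(0,1)$, which will be used to control the integrality of $\alpha+\beta$.

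Next I would split into the three cases appearing in the statement. If $\{\alpha\}+\{\beta\} < 1$, then $\{\alpha+\beta\} = \{\alpha\}+\{\beta\} \in (0,1)$, so $\alpha+\beta\notin\ZZ$ and
\[
((\alpha+\beta)) = \{\alpha\}+\{\beta\} - \frac{1}{2}
= \Big(\{\alpha\}-\frac{1}{2}\Big) + \Big(\{\beta\}-\frac{1}{2}\Big) + \frac{1}{2}
= ((\alpha)) + ((\beta)) + \frac{1}{2},
\]
which is the first line. If $\{\alpha\}+\{\beta\} > 1$, then $\{\alpha+\beta\} = \{\alpha\}+\{\beta\}-1 \in (0,1)$, so again $\alpha+\beta\notin\ZZ$ and the same computation produces an extra $-1$, giving $((\alpha+\beta)) = ((\alpha))+((\beta)) - \frac{1}{2}$, the second line. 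Finally, if $\{\alpha\}+\{\beta\} = 1$, then $\alpha+\beta = \lfloor\alpha\rfloor + \lfloor\beta\rfloor + 1 \in \ZZ$, so $((\alpha+\beta)) = 0$ by the convention in \eqref{st}.

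I do not expect any genuine obstacle; the proof is a routine three-way case analysis. The only point deserving a moment's care is checking that $\alpha+\beta$ is non-integral in the first two cases, so that the formula $((x)) = \{x\}-\frac{1}{2}$ legitimately applies — and this is immediate from $\{\alpha\},\{\beta\}\in(0,1)$ together with the respective strict inequality. Accordingly I would write out the full argument in just a few lines.
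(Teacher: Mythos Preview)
Your proposal is correct and is precisely the routine case analysis one would expect; the paper itself simply declares the proof elementary and omits it, so your argument is in complete agreement with the intended approach.
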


\subsection{Reciprocity formulas for sawtooth functions}
Let $r<q<p$ and write:
\begin{align}
\begin{split}
&p=e_{pr}r-a_{pr}\\
&p=e_{pq}q-a_{pq}\\
&q=e_{qr}r-a_{qr}.
\end{split}
\end{align}
We have the following identities from Proposition $\ref{nt}$ (1):
\begin{align}
\begin{split}
&p^{-1;r}=r-a^{-1;r}_{pr}\\
&p^{-1;q}=q-a^{-1;q}_{pq}\\
&q^{-1;r}=r-a^{-1;r}_{qr}
\end{split}
\end{align}
and from Proposition $\ref{nt}$ (2):
\begin{align}
\begin{split}
&rr^{-1;p}=1+a^{-1;r}_{pr}p\\
&rr^{-1;q}=1+a^{-1;r}_{qr}q\\
&qq^{-1;p}=1+a^{-1;q}_{pq}p.
\end{split}
\end{align}
We now use these identities to prove reciprocity laws for the sawtooth function.  These reciprocity laws will be broken up into two theorems where the first is independent of $r+q$ in relation to $p$ and the second is dependent.
\begin{theorem}
\label{2}
We have the following reciprocity relations:
\begin{enumerate}
\item $\Big(\Big(\frac{qp^{-1;r}}{r}\Big)\Big)+ \Big(\Big(\frac{qr^{-1;p}}{p}\Big)\Big)=\frac{q}{pr}$\\
\item $\Big(\Big(\frac{rp^{-1;q}}{q}\Big)\Big)+ \Big(\Big(\frac{rq^{-1;p}}{p}\Big)\Big)=\frac{r}{pq}$.
\end{enumerate}
\end{theorem}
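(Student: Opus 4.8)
The plan is to prove (1) by a short Chinese-remainder argument, after which (2) follows verbatim once the roles of the moduli are permuted. First I would fix the integer representatives $x\in\{1,\dots,p-1\}$ with $x\equiv q\,r^{-1;p}\pmod p$ and $y\in\{1,\dots,r-1\}$ with $y\equiv q\,p^{-1;r}\pmod r$. These are nonzero because $q$ is prime to both $p$ and $r$ (pairwise coprimality of $r,q,p$ is part of the standing hypothesis), so the arguments $q\,r^{-1;p}/p$ and $q\,p^{-1;r}/r$ are non-integral and
\begin{align}
\Big(\Big(\tfrac{q\,r^{-1;p}}{p}\Big)\Big)=\tfrac{x}{p}-\tfrac12,\qquad
\Big(\Big(\tfrac{q\,p^{-1;r}}{r}\Big)\Big)=\tfrac{y}{r}-\tfrac12.
\end{align}
Thus the identity to be proved is equivalent to $\tfrac{x}{p}+\tfrac{y}{r}=1+\tfrac{q}{pr}$, i.e.\ to the single integer equation $xr+yp=pr+q$.

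The heart of the matter is to pin down $xr+yp$ exactly. Reducing modulo $p$ kills the $yp$ term and gives $xr\equiv q$; reducing modulo $r$ kills the $xr$ term and gives $yp\equiv q$; since $\gcd(p,r)=1$ this yields $xr+yp\equiv q\pmod{pr}$. On the other hand $1\le x\le p-1$ and $1\le y\le r-1$ force $p+r\le xr+yp\le 2pr-p-r<2pr$, so $xr+yp$ must be one of the two values $q$ or $q+pr$. The lower bound $xr+yp\ge p+r>q$ (this is the only place the ordering is used, and only through $q<p$) rules out $q$, leaving $xr+yp=q+pr$. Substituting back proves part~(1). Equivalently one could invoke Proposition~\ref{saw} with $\alpha=q\,r^{-1;p}/p$, $\beta=q\,p^{-1;r}/r$, noting that $\{\alpha\}+\{\beta\}=1+q/(pr)>1$, but the direct substitution is quicker.

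For part~(2) I would run the identical argument with $r$ in the role of the ``numerator'' and the pair $(p,q)$ in place of $(p,r)$: put $x'\in\{1,\dots,p-1\}$ with $x'\equiv r\,q^{-1;p}\pmod p$ and $y'\in\{1,\dots,q-1\}$ with $y'\equiv r\,p^{-1;q}\pmod q$, so that $x'q+y'p\equiv r\pmod{pq}$ while $p+q\le x'q+y'p\le 2pq-p-q$; since $p+q>r$ (as $r<q<p$) the alternative $x'q+y'p=r$ is impossible, whence $x'q+y'p=r+pq$ and therefore $((r\,q^{-1;p}/p))+((r\,p^{-1;q}/q))=x'/p+y'/q-1=r/(pq)$. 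I do not expect a genuine obstacle: the computation is routine, and the only points needing care are (i) verifying all four sawtooth arguments are non-integral, so that $((x))=\{x\}-\tfrac12$ legitimately applies — this is precisely the pairwise coprimality — and (ii) selecting the correct alternative among $\{q,\,q+pr\}$ (resp.\ $\{r,\,r+pq\}$), which is where, and the only place where, the hypotheses $r<q<p$ enter. An alternative route that expands everything through the identities of Proposition~\ref{nt} is available but distinctly messier.
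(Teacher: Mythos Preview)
Your proof is correct and is genuinely different from the paper's argument.

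The paper proceeds via the identities of Proposition~\ref{nt}: it writes $p^{-1;r}=r-a_{pr}^{-1;r}$ and $rr^{-1;p}=1+a_{pr}^{-1;r}p$, which transforms the sum into
\[
-\bigg(\bigg(\frac{qa_{pr}^{-1;r}}{r}\bigg)\bigg)+\bigg(\bigg(\frac{q}{pr}+\frac{qa_{pr}^{-1;r}}{r}\bigg)\bigg),
\]
then writes $qa_{pr}^{-1;r}/r=X+C/r$ with $0<C<r$, checks the inequality $q/(pr)+C/r<1$ (using $q<p$), and invokes Proposition~\ref{saw} to split the second sawtooth. This is precisely the ``messier'' alternative you flagged at the end.

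Your Chinese-remainder argument is cleaner: passing to the canonical residues $x,y$ turns both sawtooths into explicit fractions, and the whole identity collapses to the single Diophantine statement $xr+yp=pr+q$, settled by a congruence and a crude bound. The paper's route has the mild advantage of reusing Proposition~\ref{nt}, which is already in play for Proposition~\ref{reci}, and of making the parallel with Theorem~\ref{3} (where the analogous inequality can fail, producing the case split) more visible. But as a stand-alone proof of Theorem~\ref{2}, yours is shorter and more transparent.
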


\begin{proof}
Consider the first reciprocity relation.  We have
\begin{align}
\begin{split}
\bigg(\bigg(\frac{qp^{-1;r}}{r}\bigg)\bigg)+ \bigg(\bigg(\frac{qr^{-1;p}}{p}\bigg)\bigg)&=\bigg(\bigg(\frac{q(r-a^{-1;r}_{pr})}{r}\bigg)\bigg)+\bigg(\bigg(\frac{rr^{-1;p}q}{pr}\bigg)\bigg)\\
&=\bigg(\bigg(q-\frac{qa^{-1;r}_{pr}}{r}\bigg)\bigg)+\bigg(\bigg(\frac{q}{pr}+\frac{qa^{-1;r}_{pr}}{r}\bigg)\bigg)\\
&=-\bigg(\bigg(\frac{qa^{-1;r}_{pr}}{r}\bigg)\bigg)+\bigg(\bigg(\frac{q}{pr}+\frac{qa^{-1;r}_{pr}}{r}\bigg)\bigg)
\end{split}
\end{align}
Now, we can write $\frac{qa^{-1;r}_{pr}}{r}=X+\frac{C}{r}$ where $0<X$ and $0<C<r$ are positive integers so that
\begin{align}
\label{1}-\bigg(\bigg(\frac{qa^{-1;r}_{pr}}{r}\bigg)\bigg)+\bigg(\bigg(\frac{q}{pr}+\frac{qa^{-1;r}_{pr}}{r}\bigg)\bigg)&=
-\bigg(\bigg(\frac{C}{r}\bigg)\bigg)+\bigg(\bigg(\frac{q}{pr}+\frac{C}{r}\bigg)\bigg).
\end{align}
Since $0<C<r$ we know that 
\begin{align}
\frac{q}{pr}+\frac{C}{r}\leq \frac{q}{pr}+\frac{r-1}{r}=\frac{q}{pr}+\frac{pr-p}{pr}<1,
\end{align}
because $q<p$, which implies that
\begin{align}
\bigg\{\frac{p}{qr}\bigg\}+\bigg\{\frac{C}{r}\bigg\}<1.
\end{align}
Therefore, by Proposition $\ref{saw}$, we can separate the second sawtooth function to get
\begin{align}
\bigg(\bigg(\frac{q}{pr}+\frac{C}{r}\bigg)\bigg)=\bigg(\bigg(\frac{q}{pr}\bigg)\bigg)+\bigg(\bigg(\frac{C}{r}\bigg)\bigg)+\frac{1}{2}.
\end{align}
Putting this back into $\eqref{1}$ we see that
\begin{align}
\begin{split}
\bigg(\bigg(\frac{qp^{-1;r}}{r}\bigg)\bigg)&+ \bigg(\bigg(\frac{qr^{-1;p}}{p}\bigg)\bigg)=-\bigg(\bigg(\frac{C}{r}\bigg)\bigg)+\bigg(\bigg(\frac{q}{pr}+\frac{C}{r}\bigg)\bigg)\\
&=-\bigg(\bigg(\frac{C}{r}\bigg)\bigg)+\bigg(\bigg(\frac{q}{pr}\bigg)\bigg)+\bigg(\bigg(\frac{C}{r}\bigg)\bigg)+\frac{1}{2}=\bigg(\bigg(\frac{q}{pr}\bigg)\bigg)+\frac{1}{2}\\
&=\frac{q}{pr}-\Big\lfloor \frac{q}{pr}\Big \rfloor=\frac{q}{pr}.
\end{split}
\end{align}
The proof of the second reciprocity relation exactly follows the proof of the first.
\end{proof}

The next theorem gives a similar reciprocity relation, but it is dependent upon $r+q$ in relation to $p$.
\begin{theorem}
\label{3}
For $1<r<q<p$ we have the following reciprocity relation:
\begin{align}
\bigg(\bigg(\frac{q^{-1;r}p}{r}\bigg)\bigg)+\bigg(\bigg(\frac{r^{-1;q}p}{q}\bigg)\bigg)=
\begin{cases}
\frac{p}{qr} &\text{when $r+q>p$}\\
\frac{p}{qr}-1 &\text{when $r+q=p$}\\
\frac{p}{qr}-\lfloor \frac{p}{qr}\rfloor &\text{when $r+q<p$}\\
&\text{and $\Big\{\frac{p}{qr}\Big\}<\Big\{\frac{q^{-1;r}p}{r}\Big\}$}\\
\frac{p}{qr}-\lfloor \frac{p}{qr}\rfloor-1 &\text{when $r+q<p$}\\
&\text{and $\Big\{\frac{p}{qr}\Big\}>\Big\{\frac{q^{-1;r}p}{r}\Big\}$}.
\end{cases}
\end{align}
\end{theorem}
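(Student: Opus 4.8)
The strategy is to mimic the proof of Theorem \ref{2}, working with the identities from Proposition \ref{nt} to rewrite the two sawtooth terms, then applying the additivity rule for the sawtooth function (Proposition \ref{saw}) to the sum. First I would use Proposition \ref{nt}(1) to write $q^{-1;r} = r - a_{qr}^{-1;r}$ and $r^{-1;q} = q - a_{qr}^{-1;q}$, so that
\begin{align*}
\bigg(\bigg(\frac{q^{-1;r}p}{r}\bigg)\bigg) = -\bigg(\bigg(\frac{a_{qr}^{-1;r}p}{r}\bigg)\bigg),\qquad
\bigg(\bigg(\frac{r^{-1;q}p}{q}\bigg)\bigg) = -\bigg(\bigg(\frac{a_{qr}^{-1;q}p}{q}\bigg)\bigg),
\end{align*}
using that $((\,\cdot\,))$ depends only on the argument mod $1$ and is odd. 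The point of the substitution is that $a_{qr}^{-1;r}$ and $a_{qr}^{-1;q}$ are inverses of the \emph{same} integer $a_{qr}$, which is the bridge that lets me relate the two fractions. Writing $a_{qr}^{-1;r}p/r = X_1 + C_1/r$ and $a_{qr}^{-1;q}p/q = X_2 + C_2/q$ with $0 < C_1 < r$, $0 < C_2 < q$, I would next identify a common "lift": the key observation is that $C_1/r$ and $C_2/q$ should add (mod $1$) to something controlled by $p/(qr)$, much as in Theorem \ref{2} where the cross terms cancelled.

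**The main computation.** The heart of the argument is to combine the two sawtooth terms using Proposition \ref{saw}. Concretely, I expect to show that the sum equals $-((C_1/r + C_2/q))$ up to the half-integer correction dictated by whether $\{C_1/r\} + \{C_2/q\}$ is less than, equal to, or greater than $1$, and then to recognize $C_1/r + C_2/q$ as congruent mod $1$ to $p/(qr)$ (or its negative). The three-way split in the statement — $r+q > p$, $r+q = p$, $r+q < p$ — should arise precisely from comparing $\{C_1/r\} + \{C_2/q\}$ with $1$, which in turn is governed by the size of $p$ relative to $r+q$; and within the case $r+q < p$, the further dichotomy $\{\frac{p}{qr}\} \lessgtr \{\frac{q^{-1;r}p}{r}\}$ records which side of an integer boundary the relevant combination lands on, i.e. the value of $\lfloor p/(qr)\rfloor$ versus the floor of the individual term. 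I would track the floors carefully: since $1 < r < q$, we have $p/(qr) < p/r < \cdots$, and when $r+q \geq p$ one gets $p/(qr) < 1$ so $\lfloor p/(qr)\rfloor = 0$, collapsing the last two cases into the first two.

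**The main obstacle.** The delicate point will be pinning down the relation between $C_1, C_2$ and $p/(qr)$ precisely — i.e., showing $\frac{C_1}{r} + \frac{C_2}{q} \equiv \pm\frac{p}{qr} \pmod 1$ and determining the exact integer shift — since unlike in Theorem \ref{2}, where one fraction was manifestly tiny ($q/(pr) < 1/r$), here the term $p/(qr)$ can be large, so several applications of Proposition \ref{saw} (or an induction peeling off $\lfloor p/(qr)\rfloor$) may be needed, and the inequalities distinguishing the sub-cases must be verified by hand using $1 < r < q < p$. I would handle this by first disposing of $r+q \geq p$ (where everything is a single clean application of Proposition \ref{saw}, exactly paralleling Theorem \ref{2}), and then in the case $r + q < p$ carefully writing $p = m qr + s$ with $0 \le s < qr$ to extract $\lfloor p/(qr) \rfloor = m$, reducing to a comparison of fractional parts that is then settled by the stated number-theoretic hypothesis. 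The bookkeeping of signs and floors is the only real hazard; the structural steps are dictated by Propositions \ref{nt} and \ref{saw}.
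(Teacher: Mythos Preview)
Your overall architecture---rewrite the two sawtooth terms via Proposition~\ref{nt} and then combine them using Proposition~\ref{saw}---is exactly right and matches the paper. But your opening move contains a genuine error that derails the computation.

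You invoke Proposition~\ref{nt}(1) to write $r^{-1;q} = q - a_{qr}^{-1;q}$. This identity is false. Proposition~\ref{nt}(1) is stated for $\alpha<\beta$ with $\beta = e\alpha - a$; applying it with $\alpha=r$, $\beta=q$ yields $q^{-1;r} = r - a_{qr}^{-1;r}$ (your first identity, which is correct), but it says nothing about $r^{-1;q}$. Worse, $a_{qr}^{-1;q}$ need not even exist: for example $r=5$, $q=6$ gives $a_{qr}=4$ and $\gcd(4,6)=2$. And even when it exists, $r(q-a_{qr}^{-1;q})\equiv 1 \pmod q$ would force $r\equiv -a_{qr}\pmod q$, which is not generally true. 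So your plan to put the two terms into the symmetric form $-((C_1/r))-((C_2/q))$ and then argue that $C_1/r + C_2/q \equiv \pm p/(qr)\pmod 1$ never gets off the ground.

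The paper instead uses Proposition~\ref{nt}(2), $rr^{-1;q} = 1 + a_{qr}^{-1;r}q$, to rewrite the second term as
\[
\bigg(\bigg(\frac{r^{-1;q}p}{q}\bigg)\bigg)
= \bigg(\bigg(\frac{p}{qr}+\frac{a_{qr}^{-1;r}p}{r}\bigg)\bigg)
= \bigg(\bigg(\frac{p}{qr}-\frac{q^{-1;r}p}{r}\bigg)\bigg),
\]
so that the sum becomes $-((a_{qr}^{-1;r}p/r)) + ((p/(qr)+a_{qr}^{-1;r}p/r))$. Now the $p/(qr)$ appears \emph{inside} one sawtooth, and a single application of Proposition~\ref{saw} does the work---no need to discover a hidden congruence between $C_1/r+C_2/q$ and $p/(qr)$. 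The case analysis then proceeds as follows: for $r+q>p$ one must check that the fractional part $C/r$ of $a_{qr}^{-1;r}p/r$ satisfies $C\le r-2$ (ruling out $C=r-1$ by a short contradiction showing it would force $a_{pr}=a_{qr}$), which guarantees $\{p/(qr)\}+\{C/r\}<1$; for $r+q=p$ one substitutes $p=r+q$ directly; and for $r+q<p$ the rewriting $((q^{-1;r}p/r))+((p/(qr)-q^{-1;r}p/r))$ together with $\{-x\}=1-\{x\}$ reduces Proposition~\ref{saw}'s threshold exactly to the stated dichotomy $\{p/(qr)\}\lessgtr\{q^{-1;r}p/r\}$. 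Your sketch of ``peeling off $\lfloor p/(qr)\rfloor$'' is therefore unnecessary once the correct identity is in place.
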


\begin{proof}
We begin in a similar way to the proof of Theorem \ref{2}:
\begin{align}
\begin{split}
\bigg(\bigg(\frac{q^{-1;r}p}{r}\bigg)\bigg)+\bigg(\bigg(\frac{r^{-1;q}p}{q}\bigg)\bigg)&=\bigg(\bigg(\frac{p(r-a^{-1;r}_{qr})}{r}\bigg)\bigg)+\bigg(\bigg(\frac{(1+a^{-1;r}_{qr}q)p}{qr}\bigg)\bigg)\\
&=-\bigg(\bigg(\frac{a^{-1;r}_{qr}p}{r}\bigg)\bigg)+\bigg(\bigg(\frac{p}{qr}+\frac{a^{-1;r}_{qr}p}{r}\bigg)\bigg).
\end{split}
\end{align}
Now, we can write $\frac{a^{-1;r}_{qr}p}{r}=X+\frac{C}{r}$ where $0<X$ and $0<C<r$ are positive integers so that
\begin{align}
-\bigg(\bigg(\frac{a^{-1;r}_{qr}p}{r}\bigg)\bigg)+\bigg(\bigg(\frac{p}{qr}+\frac{a^{-1;r}_{qr}p}{r}\bigg)\bigg)&=-\bigg(\bigg(\frac{C}{r}\bigg)\bigg)+\bigg(\bigg(\frac{p}{qr}+\frac{C}{r}\bigg)\bigg).
\end{align}
The same argument that we used in the previous proof to split up the second sawtooth function will no longer work because $p>q$, which could allow $\frac{p}{qr}+\frac{C}{r}>1$.  Lets consider the first case of this reciprocity relation when $r+q>p$.  In this case we know that $p<rq$ so $p<2q$ since $1<r<q$.  Now, we will show that $C\leq r-2$ and use this to prove the first case.  Write $p$ as
\begin{align}
p=k_{pr}r+m_{pr},
\end{align}
where $k_{pr}=e_{pr}-1$ and $m_{pr}=r-a_{pr}$ are positive integers.  We know that $\frac{C}{r}$ is going to be the fractional part of $\frac{pa^{-1;r}_{qr}}{r}$ which equals the fractional part of $\frac{m_{pr}a^{-1;r}_{qr}}{r}$.  If this equals $\frac{r-1}{r}$ then $m_{pr}a^{-1;r}_{pr}\equiv -1 (r)$ and therefore $m_{pr}=r-a_{qr}$, because multiplicative inverses are unique, which implies that $a_{pr}=a_{qr}$ (denote this value by $A$).  So we have that
\begin{align}
e_{pr}r-A=p<q+r=(e_{qr}r-A)+r=(e_{qr}+q)r-A,
\end{align}
which is a contradiction because $e_{pr}\geq e_{qr}+1$.  Therefore $\frac{C}{r}\leq \frac{r-2}{r}$, so
\begin{align}
\frac{p}{qr}+\frac{C}{r}\leq \frac{p}{qr}+\frac {r-2}{r}=\frac{p}{qr}+\frac{qr-2q}{qr}<1,
\end{align}
which implies that
\begin{align}
\bigg\{\frac{p}{qr}\bigg\}+\bigg\{\frac{C}{r}\bigg\}<1.
\end{align}
Therefore, by Proposition $\ref{saw}$, we can separate the second sawtooth function to get
\begin{align}
\begin{split}
-\bigg(\bigg(\frac{C}{r}\bigg)\bigg)+\bigg(\bigg(\frac{p}{qr}+\frac{C}{r}\bigg)\bigg)&=-\bigg(\bigg(\frac{C}{r}\bigg)\bigg)+\bigg(\bigg(\frac{p}{qr}\bigg)\bigg)+\bigg(\bigg(\frac{C}{r}\bigg)\bigg)+\frac{1}{2}\\
&=\bigg(\bigg(\frac{p}{qr}\bigg)\bigg)+\frac{1}{2}=\frac{p}{qr}-\Big\lfloor \frac{p}{qr}\Big \rfloor=\frac{p}{qr},
\end{split}
\end{align}
which proves the first case.  

Now, consider the second case when $r+q=p$.  In this case we see that
\begin{align}
\begin{split}
\bigg(\bigg(\frac{p}{qr}+\frac{pa^{-1;r}_{qr}}{r}\bigg)\bigg)&=\bigg(\bigg(\frac{p}{qr}+\frac{(r+q)a^{-1;r}_{qr}}{r}\bigg)\bigg)\\
&=\bigg(\bigg(\frac{p}{qr}-\frac{a_{qr}a^{-1;r}_{qr}}{r}\bigg)\bigg)=\bigg(\bigg(\frac{p}{qr}-\frac{1}{r}\bigg)\bigg)\\
&=\Big(\frac{p}{qr}-\frac{1}{r}\Big)-\Big\lfloor \frac{p}{qr}-\frac{1}{r} \Big\rfloor -\frac{1}{2}
=\frac{1}{q}-\frac{1}{2}.
\end{split}
\end{align}
Then, we compare this to
\begin{align}
\begin{split}
\bigg(\bigg(&\frac{p}{qr}\bigg)\bigg)+\bigg(\bigg(\frac{pa^{-1;r}_{qr}}{r}\bigg)\bigg)+\frac{1}{2}=\bigg(\bigg(\frac{p}{qr}\bigg)\bigg)+\bigg(\bigg(\frac{(r+q)a^{-1;r}_{qr}}{r}\bigg)\bigg)+\frac{1}{2}\\
&=\bigg(\bigg(\frac{p}{qr}\bigg)\bigg)+\bigg(\bigg(\frac{-a_{qr}a^{-1;r}_{qr}}{r}\bigg)\bigg)+\frac{1}{2}=\bigg(\bigg(\frac{p}{qr}\bigg)\bigg)-\bigg(\bigg(\frac{1}{r}\bigg)\bigg)+\frac{1}{2}\\
&=\Big(\frac{p}{qr}-\Big\lfloor \frac{p}{qr}\Big \rfloor-\frac{1}{2}\Big)-\Big(\frac{1}{r}-\Big\lfloor \frac{1}{r}\Big \rfloor -\frac{1}{2}\Big)+\frac{1}{2}\\
&=\frac{p}{qr}-\frac{1}{r}+\frac{1}{2}=\frac{1}{q}+\frac{1}{2}
=\bigg(\bigg(\frac{p}{qr}+\frac{pa^{-1;r}_{qr}}{r}\bigg)\bigg)+1.
\end{split}
\end{align}
Therefore we see that
\begin{align}
\begin{split}
-\bigg(\bigg(\frac{a^{-1;r}_{qr}p}{r}\bigg)\bigg)&+\bigg(\bigg(\frac{p}{qr}+\frac{a^{-1;r}_{qr}p}{r}\bigg)\bigg)\\
&=-\bigg(\bigg(\frac{a^{-1;r}_{qr}p}{r}\bigg)\bigg)+\bigg(\bigg(\frac{p}{qr}\bigg)\bigg)+\bigg(\bigg(\frac{pa^{-1;r}_{qr}}{r}\bigg)\bigg)-\frac{1}{2}\\
&=\bigg(\bigg(\frac{p}{qr}\bigg)\bigg)-\frac{1}{2}=\frac{p}{qr}-\Big\lfloor \frac{p}{qr} \Big\rfloor -1=\frac{p}{qr}-1,
\end{split}
\end{align}
which proves the second case.

To prove the third and fourth cases we begin once again by using that
\begin{align}
\bigg(\bigg(\frac{q^{-1;r}p}{r}\bigg)\bigg)+\bigg(\bigg(\frac{r^{-1;q}p}{q}\bigg)\bigg)=\bigg(\bigg(\frac{q^{-1;r}p}{r}\bigg)\bigg)+\bigg(\bigg(\frac{p}{qr}-\frac{q^{-1;r}p}{r}\bigg)\bigg).
\end{align}
Notice that 
\begin{align}
\Big\{\frac{p}{qr}\Big\}+\Big\{\frac{-q^{-1;r}p}{r}\Big\}=\Big\{\frac{p}{qr}\Big\}+1-\Big\{\frac{q^{-1;r}p}{r}\Big\},
\end{align}
which is never equal to one because $r$, $q$ and $p$ are relatively prime.  Now, the rest of the proof of the third and fourth cases follows directly from Proposition $\ref{saw}$.
\end{proof}

\subsection{$\Gamma$-Index for weighted projective spaces}
First, recall Definition $\ref{d-exc}$: singularities resulting from a $(p-1,p)$-action are said to be {\em{exceptional}} and otherwise they are called {\em{non-exceptional}}.
Consider the case when $1<r<q<p$ so that there are three 
singularities.
Before giving theorems concerning the index, we will first examine what type singularities, non-exceptional or exceptional, are admitted around each orbifold point in the cases when $r+q>p$, $r+q=p$ and $r+q<p$.

\begin{proposition}
\label{gr}
When $r+q>p$ all three singularites are non-exceptional.
When $r+q=p$ we have that
\begin{enumerate}
\item The singularity at $[1,0,0]$ is always exceptional.
\item The singularity at $[0,1,0]$ is always exceptional.
\item The singularity at $[0,0,1]$ is non-exceptional and comes from a $(1,p)$-action.
\end{enumerate}
When $r+q<p$ we have that
\begin{enumerate}
\item  The singularity at $[1,0,0]$ is exceptional if and only if $p\equiv q\text{ mod $r$}$.
\item  The singularity at $[0,1,0]$ is exceptional if and only if $p\equiv r\text{ mod $q$}$.
\item  The singularity at $[0,0,1]$ is always non-exceptional.
\end{enumerate}
\end{proposition}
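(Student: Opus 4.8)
The plan is to reduce every assertion to an elementary congruence. Recall from Definition~\ref{d-exc} that a $(q',p')$-action is exceptional precisely when $q'\equiv -1 \pmod{p'}$, and observe that this condition is unchanged if $q'$ is replaced by any integer congruent to it mod $p'$, so the negative residues appearing in the three listed actions cause no trouble (and, since $-1$ is its own inverse mod $p'$, the condition also respects the equivalence $q'\sim (q')^{-1}$ of Remark~\ref{actrem}). First I would translate exceptionality at each of the three orbifold points into a statement about $r,q,p$: at $[1,0,0]$ the action $(-q^{-1;r}p,r)$ is exceptional iff $-q^{-1;r}p\equiv -1\pmod r$, which upon multiplying by $q$ is equivalent to $p\equiv q\pmod r$; likewise at $[0,1,0]$ the action $(-p^{-1;q}r,q)$ is exceptional iff $p\equiv r\pmod q$; and at $[0,0,1]$ the action $(-r^{-1;p}q,p)$ is exceptional iff $q\equiv r\pmod p$. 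Since $1<r<q<p$ forces $0<q-r<p$, this last congruence never holds, so the singularity at $[0,0,1]$ is always non-exceptional, which already settles part of each of the three regimes.

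Next I would dispatch the three regimes using only the inequalities $1<r<q<p$. When $r+q>p$: from $q<p<r+q$ we get $0<p-q<r$, so $p\not\equiv q\pmod r$, and from $r<p<r+q$ we get $0<p-r<q$, so $p\not\equiv r\pmod q$; hence all three singularities are non-exceptional. When $r+q=p$: now $p-q=r\equiv 0\pmod r$ and $p-r=q\equiv 0\pmod q$, so $[1,0,0]$ and $[0,1,0]$ are both exceptional, while at $[0,0,1]$ one substitutes $q\equiv -r\pmod p$ into the first coordinate to get $-r^{-1;p}q\equiv r^{-1;p}r\equiv 1\pmod p$, so the action there is a $(1,p)$-action, which is non-exceptional since $p>2$. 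When $r+q<p$: the claimed biconditionals for $[1,0,0]$ and $[0,1,0]$ are exactly the translated exceptionality conditions obtained in the first paragraph, and $[0,0,1]$ is non-exceptional by the observation there.

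The content is entirely elementary and no reciprocity laws or results from the earlier sections are required; Proposition~\ref{nt} could be used as an alternative route to the same congruences but is not needed. The only point demanding a bit of care is bookkeeping of the sign and inverse conventions — correctly matching ``$(-q^{-1;r}p,r)$ is exceptional'' with ``$p\equiv q\pmod r$'' and similarly for the other two points, and, in the borderline case $r+q=p$, checking that the reduced first coordinate at $[0,0,1]$ is the unit $1$ rather than some other residue.
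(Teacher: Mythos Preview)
Your proof is correct and follows essentially the same approach as the paper: reduce exceptionality at each point to a congruence and then check the three regimes using the inequalities $1<r<q<p$. Your execution is somewhat cleaner than the paper's, which routes the same congruences through the modified Euclidean algorithm notation $a_{pr},a_{qr},a_{pq}$ (via Proposition~\ref{nt}) and then argues by contradiction in each case, whereas you simply multiply $-q^{-1;r}p\equiv -1\pmod r$ through by $q$ (and analogously at the other points) to obtain $p\equiv q\pmod r$ directly; the underlying logic is the same.
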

\begin{proof}
At $[1,0,0]$ the $(-q^{-1;r}p,r)$-action is equivalent to a $(-a^{-1;r}_{qr}a_{pr},r)$-action, and this is equivalent to a $(r-1,r)$-action if and only if $a_{pr}=a_{qr}$.  
If $ r + q > p$, suppose that $a_{pr}=a_{qr}$, then 
\begin{align}
 p=e_{pr}r-a_{qr}, \text{ and } q=e_{qr}r-a_{qr},
\end{align}
so $p<q+r=(e_{qr}+1)r-a_{qr}$, which is a contradiction because $e_{pr}\geq e_{qr}+1$.  
If $r+q=p$ we have that
\begin{align}
p=q+r=(e_{qr}+1)r-a_{qr},
\end{align}
so we see that $a_{pr}=a_{qr}$ since $a_{qr}<r$.  
If $ r + q < p$, then this happens if and only if $p\equiv q\text{ mod $r$}$.

At $[0,1,0]$, by Remark \ref{actrem}, the $(-p^{-1;q}r,q)$-action is equivalent to
a $(-r^{-1;q}p,q)$-action. This is equivalent to 
a $(r^{-1;q}a_{pq},q)$-action, which is equivalent to a $(q-1,q)$-action if and only if $a_{pq}r^{-1;q}\equiv -1 \mod q$, which would imply that $a_{pq}=q-r$. 
If $ r + q > p$, suppose that $a_{pq}=q-r$, then
\begin{align}
p=2q-a_{pq}=2q-(q-r) =q+r,
\end{align}
which is a contradiction because $r+q>p$. If $r+q=p$, we have that
\begin{align}
p=2q-(q-r),
\end{align}
so we see that $a_{pq}=q-r$.  If $ r + q < p$ then this happens 
if and only if  $p\equiv r \mod q$.

At $[0,0,1]$ the $(-r^{-1;p}q,p)$-action is equivalent to a $(p-1,p)$-action if and only if $r^{-1;p}q\equiv 1 \mod p$. If $ r + q > p$, this condition   
would imply that $q=r$, which is a contradiction. 
If $r+q=p$ then the  $(-r^{-1;p}q,p)$-action is obviously equivalent to a $(1,p)$-action 
since $q = p - r$. 
If $ r + q < p$ then $r^{-1;p}q\equiv 1 \mod p$ occurs if and only if $q=r$, 
but $q>r$ so this can never happen.
\end{proof}
In the case $r + q < p$, we can add the following:
\begin{proposition}
\label{lee}
When $r+q<p$ and the singularities at $[1,0,0]$ and $[0,1,0]$ are both exceptional, we have that $p=Xqr+r+q$ for some integer $X$, and
\begin{align}
\bigg\{\frac{p}{qr} \bigg\}>\bigg\{\frac{q^{-1;r}p}{r}\bigg\}.
\end{align}
\end{proposition}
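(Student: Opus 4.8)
The plan is to feed the two exceptionality hypotheses into the modified Euclidean data for the pairs $(r,q)$ and $(q,p)$, extract a pair of linear relations among $p,q,r$, and then read off both the divisibility statement $p\equiv q+r\pmod{qr}$ and the two fractional parts directly.

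Write $p=e_{pr}r-a_{pr}$, $q=e_{qr}r-a_{qr}$, $p=e_{pq}q-a_{pq}$ as in the statement, with $0<a_{pr},a_{qr}<r$ and $0<a_{pq}<q$. By the proof of Proposition~\ref{gr}, in the regime $r+q<p$ the singularity at $[1,0,0]$ is exceptional exactly when $a_{pr}=a_{qr}$ and the singularity at $[0,1,0]$ is exceptional exactly when $a_{pq}=q-r$; so assume both. Subtracting the first two displays gives $p-q=(e_{pr}-e_{qr})r$, i.e. $p=q+mr$ with $m:=e_{pr}-e_{qr}$ a positive integer (positive since $p>q$). The third display becomes $p=e_{pq}q-(q-r)=nq+r$ with $n:=e_{pq}-1\geq 1$. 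Equating $q+mr=nq+r$ yields $(m-1)r=(n-1)q$, and since $\gcd(q,r)=1$ this forces $q\mid(m-1)$, say $m-1=Xq$ with $X\geq 0$ an integer. Substituting back, $p=q+mr=q+r+Xqr$. The hypothesis $r+q<p$ excludes $X=0$, so $X\geq 1$ and $p=Xqr+r+q$, which is the first assertion.

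For the fractional parts, observe $q+r<qr$ since $qr-q-r=(q-1)(r-1)-1\geq 1$ (using $r\geq 2$, $q\geq 3$). Hence $\frac{p}{qr}=X+\frac1q+\frac1r$ with $0<\frac1q+\frac1r\leq\frac12+\frac13<1$, so $\{\frac{p}{qr}\}=\frac1q+\frac1r$. On the other hand $a_{pr}=a_{qr}$ gives $p\equiv q\pmod r$, whence $q^{-1;r}p\equiv q^{-1;r}q\equiv 1\pmod r$ and therefore $\{\frac{q^{-1;r}p}{r}\}=\frac1r$. Comparing, $\{\frac{p}{qr}\}=\frac1q+\frac1r>\frac1r=\{\frac{q^{-1;r}p}{r}\}$, as claimed.

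The argument is entirely elementary; the only point requiring care is keeping the three symbols $a_{pr}$, $a_{qr}$, $a_{pq}$ apart and translating ``exceptional'' into the correct congruence, for which Propositions~\ref{gr} and~\ref{nt} already do the work. There is no substantive obstacle here — once the two fractional parts have been identified, the desired strict inequality collapses to the triviality $1/q>0$.
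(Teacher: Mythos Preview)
Your proof is correct and follows essentially the same route as the paper's: both derive $p=Xqr+r+q$ from the two congruences $p\equiv q\pmod r$ and $p\equiv r\pmod q$ via the coprimality of $q$ and $r$, and both then compute the two fractional parts explicitly to get the difference $1/q>0$. Your treatment is slightly more streamlined in that you read off $\{\tfrac{q^{-1;r}p}{r}\}=\tfrac{1}{r}$ directly from $q^{-1;r}p\equiv 1\pmod r$, whereas the paper routes this through the identity $q^{-1;r}p\equiv a_{qr}^{-1;r}a_{pr}\pmod r$ before using $a_{pr}=a_{qr}$.
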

\begin{proof}
Since the singularities around $[1,0,0]$ and $[0,1,0]$ are both exceptional, from Proposition $\ref{gr}$ we know that
\begin{align}
p\equiv q\text{ mod } r, \text{ and } p\equiv r \text{ mod } q.
\end{align}
Therefore, we can write
\begin{align}
p=Y_1q+r=Y_2r+q,
\end{align}
and solve for
\begin{align}
r=\frac{Y_1-1}{Y_2-1}q,
\end{align}
which implies that $qX=Y_2-1$ for some $X$ in $\mathbb{Z}$, since $q$ and $r$ are relatively prime.  Then solving for $Y_2=qX+1$ we see that
\begin{align}
p=(qX+1)r+q =Xqr+r+q.
\end{align}
Now, since $p=Xqr+r+q$ we see that $a_{pr}=a_{qr}$.  Therefore
\begin{align}
\begin{split}
\bigg\{\frac{p}{qr} \bigg\}- \bigg\{\frac{a^{-1;r}_{qr}a_{pr}}{r}  \bigg\}&=\bigg\{\frac{Xqr+r+q}{qr}  \bigg\}- \bigg\{\frac{a^{-1;r}_{qr}a_{qr}}{r}  \bigg\}\\
&=\bigg\{\frac{1}{q}+\frac{1}{r} \bigg\}- \bigg\{\frac{1}{r} \bigg\}=\frac{1}{q}>0.
\end{split}
\end{align}
\end{proof}
The following is the main result of this section, which
is the same as Theorem \ref{introthm} upon identifying 
the integer $\epsilon$ with the number of exceptional singularities:
\begin{theorem}
\label{thm}
Let $g$ be the canonical Bochner-K\"ahler metric with reversed 
orientation on $\overline{\mathbb{CP}}^2_{(r,q,p)}$,
and assume that $1<r<q<p$.  If $r+q\geq p$ then 
\begin{align}
Ind(\overline{\mathbb{CP}}^2_{(r,q,p)},g)=2.
\end{align}
If $r+q<p$ then 
\begin{align}
Ind(\overline{\mathbb{CP}}^2_{(r,q,p)},g)=
\begin{cases}
2+2\epsilon -4\lfloor \frac{p}{qr} \rfloor &\text{   when $\{ \frac{p}{qr}\}<\{\frac{q^{-1;r}p}{r}\}$}\\
-2+2\epsilon-4\lfloor \frac{p}{qr} \rfloor &\text{   when $\{ \frac{p}{qr}\}>\{\frac{q^{-1;r}p}{r}\}$},
\end{cases}
\end{align}
where $\epsilon$ is the number of exceptional singularities, either $0$, $1$, or $2$.
\end{theorem}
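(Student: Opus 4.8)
The plan is to evaluate the orbifold index formula \eqref{I-G}, in the additive form of Remark~\ref{srmk}, at the three orbifold actions listed at the start of the section, and then collapse the resulting arithmetic sums using the reciprocity laws proved above. First I would record the topological input: with the anti-self-dual orientation $\overline{\mathbb{CP}}^2_{(r,q,p)}$ is a simply connected rational homology $\mathbb{CP}^2$ whose intersection form is negative definite, so $\chi_{top}=3$ and $\tau_{top}=-1$, whence $\tfrac12(15\chi_{top}+29\tau_{top})=8$ and
\[
Ind(\overline{\mathbb{CP}}^2_{(r,q,p)},g)=8+N_1+N_2+N_3,
\]
with $N_1=N(-q^{-1;r}p,r)$, $N_2=N(-p^{-1;q}r,q)$, $N_3=N(-r^{-1;p}q,p)$. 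Using Theorem~\ref{N-nonexceptional} I would split each $N_i=C_i+A_i+(\text{sawtooth}_i)$, where $C_i$ equals $-6$ for a non-exceptional point and $-4$ for an exceptional one (Definition~\ref{d-exc}), $A_i=48\,s(q_i',p_i')$ is the Dedekind-sum term, and $(\text{sawtooth}_i)$ is the pair of terms $-4((\,\cdot\,))-4((\,\cdot\,))$ coming from that action.

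The three kinds of contributions are then collapsed separately. The constants are governed by Proposition~\ref{gr}: $C_1+C_2+C_3=-18+2\epsilon$, where $\epsilon$ is the number of exceptional singularities. For the Dedekind-sum terms I would use $s(-h,k)=-s(h,k)$, the inversion symmetry $s(h,k)=s(h^{-1;k},k)$, and Proposition~\ref{nt} to normalize the modular inverses, bringing $A_1+A_2+A_3$ into the cyclic shape to which Rademacher's triple reciprocity for Dedekind sums \cite{Rademacher1} applies; this evaluates it in closed form as $A_1+A_2+A_3=12-\tfrac{4(p^2+q^2+r^2)}{pqr}$. For the sawtooth terms, each $N_i$ supplies two of them; after using $((-x))=-((x))$ and Proposition~\ref{nt} the six terms regroup into exactly the three pairs evaluated by Theorem~\ref{2}(1), Theorem~\ref{2}(2), and Theorem~\ref{3}, with values $\tfrac{q}{pr}$, $\tfrac{r}{pq}$, and the case-dependent quantity of Theorem~\ref{3}. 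Writing the last as $\tfrac{p}{qr}+\delta$ — so $\delta=0$ when $r+q>p$, $\delta=-1$ when $r+q=p$, $\delta=-\lfloor\tfrac{p}{qr}\rfloor$ when $r+q<p$ and $\{\tfrac{p}{qr}\}<\{\tfrac{q^{-1;r}p}{r}\}$, and $\delta=-\lfloor\tfrac{p}{qr}\rfloor-1$ when $r+q<p$ and $\{\tfrac{p}{qr}\}>\{\tfrac{q^{-1;r}p}{r}\}$ — the total sawtooth contribution is $\tfrac{4(p^2+q^2+r^2)}{pqr}+4\delta$.

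Summing the three contributions, the terms $\pm\tfrac{4(p^2+q^2+r^2)}{pqr}$ cancel and
\[
Ind(\overline{\mathbb{CP}}^2_{(r,q,p)},g)=8-18+2\epsilon+12+4\delta=2+2\epsilon+4\delta.
\]
By Proposition~\ref{gr}, $\epsilon=0$ when $r+q>p$ and $\epsilon=2$ when $r+q=p$, so in both sub-cases of $r+q\ge p$ this gives $Ind=2$, while the two cases of $r+q<p$ give the stated dichotomy; Proposition~\ref{lee} shows that $\epsilon=2$ forces $\{\tfrac{p}{qr}\}>\{\tfrac{q^{-1;r}p}{r}\}$, which is why only five of the six a priori cases actually occur. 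The part I expect to be the main obstacle is the bookkeeping in the middle paragraph: tracking the signs and modular inverses precisely enough to recognize $A_1+A_2+A_3$ as the exact cyclic combination covered by Rademacher's triple reciprocity, and simultaneously regrouping the six sawtooth terms so that precisely the pairs handled by Theorems~\ref{2} and \ref{3} appear. Once that grouping is right, the cancellation of the $\tfrac{4(p^2+q^2+r^2)}{pqr}$ terms — and hence the clean final answer — is automatic.
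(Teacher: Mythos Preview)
Your proposal is correct and is essentially the same argument the paper gives: start from $Ind=8+N_1+N_2+N_3$, expand each $N_i$ via Theorem~\ref{N-nonexceptional} into a constant (controlled by Proposition~\ref{gr}), a Dedekind-sum piece (collapsed by Rademacher's triple reciprocity), and a sawtooth piece (collapsed by Theorems~\ref{2} and~\ref{3}), and then read off the cases. Your packaging with $\delta$ and the explicit cancellation of $\tfrac{4(p^2+q^2+r^2)}{pqr}$ is a clean way to present exactly the computation the paper carries out case by case.
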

Note that from Proposition $\ref{lee}$ the only instance when 
two exceptional singularities can occur is in the second case,
thus there are really only five distinct cases. All of these
cases do in fact occur, see Table \ref{casestable}.
\begin{table}
\caption{Cases in Theorem \ref{thm}}
\label{casestable}
    \begin{tabular}{ | l | l | c | p{2cm} |}
    \hline
    $(r,q,p)$ &  $\epsilon$ & $\{ \frac{p}{qr}\} - \{\frac{q^{-1;r}p}{r}\}$ \\ \hline
    $(3,7,11)$ & 0 & $< 0$  \\ \hline
    $(3,7,41)$ & 0 & $> 0$ \\ \hline
    $(3,7,25)$ & 1 & $< 0$  \\ \hline
    $(3,7,13)$ & 1 & $> 0$  \\ \hline
    $(3,7,31)$ & 2 & $> 0$  \\ \hline
    \end{tabular}
\end{table}
\begin{proof}[Proof of Theorem \ref{thm}]
Since $1<r<q<p$, there are three singularities. 
Furthermore, $\chi_{top} = 3$ and $\tau_{top} = -1$ (see \cite[Appendix B]{Dimca}), 
so the $\Gamma$-index is 
\begin{align}
\begin{split}
Ind&=
8 + N(-q^{-1;r}p,r)+N(-p^{-1;q}r,q)+N(-r^{-1;p}q,p)\\
= 8 &+ \bigg[C_{(-q^{-1;r}p,r)}+A(-q^{-1;r}p,r)-4\bigg(\bigg( \frac{-q^{-1;r}p}{r}\bigg)\bigg)-4\bigg(\bigg(\frac{-p^{-1;r}q}{r}\bigg)\bigg)\bigg]\\
&+\bigg[C_{(-r^{-1;q}p,q)}+A(-p^{-1;q}r,q)-4\bigg(\bigg( \frac{-r^{-1;q}p}{q}\bigg)\bigg)-4\bigg(\bigg(\frac{-p^{-1;q}r}{q}\bigg)\bigg)\bigg]\\
&+\bigg[C_{(-r^{-1;p}q,p)}+A(-r^{-1;p}q,p)-4\bigg(\bigg( \frac{-r^{-1;p}q}{p}\bigg)\bigg)-4\bigg(\bigg(\frac{-q^{-1;p}r}{p}\bigg)\bigg)\bigg],
\end{split}
\end{align}
recalling that $C_{(\alpha, \beta)}$ was defined above in \eqref{cab}. 
Then, using Rademacher's triple reciprocity for Dedekind sums \cite{Rademacher1}
\begin{align}
s(q^{-1;r}p,r)+s(p^{-1;q}r,q)+s(r^{-1;p}q,p) = -\frac{1}{4}+\frac{1}{12}\bigg(\frac{r}{pq}+\frac{q}{pr}+\frac{p}{qr}\bigg),
\end{align}
we see that
\begin{align}
\begin{split}
Ind&=8+[C_{(-q^{-1;r}p,r)}+C_{(-r^{-1;q}p,q)}+C_{(-r^{-1;p}q,p)}]\\
&\phantom{==}+48\bigg[\frac{1}{4}-\frac{1}{12}\bigg(\frac{r}{pq}+\frac{q}{pr}+\frac{p}{qr}\bigg)\bigg]\\
&\phantom{==}+4\bigg[\bigg(\bigg( \frac{q^{-1;r}p}{r}\bigg)\bigg)+\bigg(\bigg(\frac{p^{-1;r}q}{r}\bigg)\bigg)+\bigg(\bigg( \frac{r^{-1;q}p}{q}\bigg)\bigg)\bigg]\\
&\phantom{==}+4\bigg[\bigg(\bigg(\frac{p^{-1;q}r}{q}\bigg)\bigg)+\bigg(\bigg( \frac{r^{-1;p}q}{p}\bigg)\bigg)+\bigg(\bigg(\frac{q^{-1;p}r}{p}\bigg)\bigg)\bigg].
\end{split}
\end{align}
Now, using our reciprocity laws for sawtooth functions, 
Theorems $\ref{2}$ and $\ref{3}$, and the restrictions on the types of singularities admitted, Proposition $\ref{gr}$, we complete the proof for each case.  

When $r+q>p$:
\begin{align*}
Ind&=8+[-18]+48\bigg[\frac{1}{4}-\frac{1}{12}\bigg(\frac{r}{pq}+\frac{q}{pr}+\frac{p}{qr}\bigg)\bigg] +4\bigg[\frac{r}{pq}+\frac{q}{pr}+\frac{p}{qr}\bigg] =2.
\end{align*}

When $r+q=p$:
\begin{align*}
Ind&=8+[-14]+48\bigg[\frac{1}{4}-\frac{1}{12}\bigg(\frac{r}{pq}+\frac{q}{pr}+\frac{p}{qr}\bigg)\bigg] +4\bigg[\frac{r}{pq}+\frac{q}{pr}+\frac{p}{qr}-1\bigg] =2.
\end{align*}

When $r+q<p$ and $\{ \frac{p}{qr}\}<\{\frac{q^{-1;r}p}{r}\}$:
\begin{align*}
Ind&=8+[C_{(-q^{-1;r}p,r)}+C_{(-r^{-1;q}p,q)}+C_{(-r^{-1;p}q,p)}]\\
&\phantom{==}+48\bigg[\frac{1}{4}-\frac{1}{12}\bigg(\frac{r}{pq}+\frac{q}{pr}+\frac{p}{qr}\bigg)\bigg]+4\bigg[\frac{r}{pq}+\frac{q}{pr}+\frac{p}{qr}-\Big\lfloor \frac{p}{qr}\Big\rfloor\bigg]\\
&=20+[C_{(-q^{-1;r}p,r)}+C_{(-r^{-1;q}p,q)}+C_{(-r^{-1;p}q,p)}]-4\Big\lfloor \frac{p}{qr}\Big\rfloor\\
&=2+2\epsilon-4\Big\lfloor \frac{p}{qr}\Big\rfloor.
\end{align*}

When $r+q<p$ and $\{ \frac{p}{qr}\}>\{\frac{q^{-1;r}p}{r}\}$:
\begin{align*}
Ind&=8+[C_{(-q^{-1;r}p,r)}+C_{(-r^{-1;q}p,q)}+C_{(-r^{-1;p}q,p)}]\\
&\phantom{==}+48\bigg[\frac{1}{4}-\frac{1}{12}\bigg(\frac{r}{pq}+\frac{q}{pr}+\frac{p}{qr}\bigg)\bigg]+4\bigg[\frac{r}{pq}+\frac{q}{pr}+\frac{p}{qr}-1-\Big\lfloor \frac{p}{qr}\Big\rfloor\bigg]\\
&=16+[C_{(-q^{-1;r}p,r)}+C_{(-r^{-1;q}p,q)}+C_{(-r^{-1;p}q,p)}]-4\Big\lfloor \frac{p}{qr}\Big\rfloor\\
&=-2+2\epsilon-4\Big\lfloor \frac{p}{qr}\Big\rfloor.
\end{align*}
This completes the proof.
\end{proof}

We also state the following theorem, which 
gives the index in the cases when there are strictly less than three singularities. 
\begin{theorem}
Let $g$ be the canonical Bochner-K\"ahler metric with reversed 
orientation on $\overline{\mathbb{CP}}^2_{(r,q,p)}$.
When $1=r<q<p$ there are two singularities and
\begin{align}
Ind(\overline{\mathbb{CP}}^2_{(1,q,p)},g)=
\begin{cases}
2 &\text{   when $q=p-1$}\\
-4\lfloor \frac{p}{q} \rfloor+6 &\text{   when $p=eq-(q-1)$ and $q\neq p-1$}\\
-4\lfloor \frac{p}{q} \rfloor+4 &\text{   when $1\leq a_{pq} \leq q-2$ and $q>2$}.
\end{cases}
\end{align}
When $1=r=q<p$ there is one singularity and
\begin{align}
Ind(\overline{\mathbb{CP}}^2_{(1,1,p)},g)=-4p+12.
\end{align}
\end{theorem}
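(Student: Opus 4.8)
The plan is to feed the explicit orbifold data of these weighted projective spaces into the index formula \eqref{indn} and then invoke Proposition \ref{plus,minus}. As in the proof of Theorem \ref{thm}, every weighted projective plane has $\chi_{top}=3$ and, with the complex orientation, $\tau_{top}=1$ (see \cite[Appendix B]{Dimca}); hence with the reversed orientation used throughout this section $\tau_{top}=-1$, so $\frac{1}{2}(15\chi_{top}+29\tau_{top})=8$ in every case below.

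Consider first $1=r=q<p$. The only point of $\overline{\CP}^2_{(1,1,p)}$ with nontrivial isotropy is $[0,0,1]$, and specializing the list at the beginning of Section \ref{wpssec} with $r=q=1$ (so $r^{-1;p}=1$) shows that it carries a $(-r^{-1;p}q,p)=(-1,p)$-action, i.e. a $(p-1,p)$-action, which is exceptional. Thus by \eqref{indn} together with the value $N(p-1,p)=-4p+4$ from \eqref{etc}, $Ind(\overline{\CP}^2_{(1,1,p)},g)=8+N(p-1,p)=-4p+12$.

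Now consider $1=r<q<p$, where the points with nontrivial isotropy are exactly $[0,1,0]$ and $[0,0,1]$. Specializing the same list with $r=1$: at $[0,0,1]$ we obtain a $(-r^{-1;p}q,p)=(-q,p)$-action, while at $[0,1,0]$ we obtain a $(-p^{-1;q}r,q)=(-p^{-1;q},q)$-action which, exactly as in the proof of Proposition \ref{gr}, is equivalent to a $(-r^{-1;q}p,q)=(-p,q)$-action; in particular $N(-p^{-1;q},q)=N(-p,q)$. Hence, by \eqref{indn} and the definition of $R^-$ preceding Proposition \ref{plus,minus}, $Ind(\overline{\CP}^2_{(1,q,p)},g)=8+N(-q,p)+N(-p,q)=8+R^-(q,p)$.

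It then only remains to apply Proposition \ref{plus,minus}, writing $p=eq-a$ with $0<a<q$ (so $e=e_{pq}$, $a=a_{pq}$, and $\lfloor p/q\rfloor=e-1$): when $1<q=p-1$ it gives $R^-(q,p)=-6$, hence $Ind=2$; when $a=q-1$ and $1<q<p-1$ it gives $R^-(q,p)=-4e+2$, hence $Ind=10-4e=-4\lfloor p/q\rfloor+6$; and when $1\le a\le q-2$ and $q>2$ it gives $R^-(q,p)=-4e$, hence $Ind=8-4e=-4\lfloor p/q\rfloor+4$. Since these three possibilities exhaust all $1<q<p$ (for $q=2$ only $a=q-1$ can occur), the statement follows. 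The argument is essentially bookkeeping; the only delicate point is the conjugacy identification $N(-p^{-1;q},q)=N(-p,q)$, which is nothing more than the fact, already used in Section \ref{wpssec}, that the correction term $N$ depends only on the local group action up to orientation-preserving conjugacy.
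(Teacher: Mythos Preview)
Your proof is correct and follows essentially the same approach as the paper's own (very terse) argument: compute the topological contribution $\tfrac{1}{2}(15\chi_{top}+29\tau_{top})=8$, reduce the two-singularity case to $8+R^{-}(q,p)$ via the identification of the local actions, and read off the values from Proposition~\ref{plus,minus}; the one-singularity case uses $N(p-1,p)=-4p+4$ from \eqref{etc}. Your write-up simply makes explicit the bookkeeping (the conjugacy $N(-p^{-1;q},q)=N(-p,q)$ and the translation $\lfloor p/q\rfloor=e-1$) that the paper leaves to the reader.
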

\begin{proof}
We have that
\begin{align}
\frac{1}{2}(15\chi_{top}+20\tau_{top})=8,
\end{align}
Since $1=r<q<p$ we know that $p>2$.  The first case follows from the 
reciprocity formula for $R^-(q,p)$ in Proposition $\ref{plus,minus}$.
The second case follows from $N(-1,p)=-4p+4$ in $\eqref{etc}$.
\end{proof}

\subsection{Proof of Theorem \ref{wpsthm}}
We first present a general result about $H^2(M,g)$ on certain 
self-dual K\"ahler orbifolds:
\begin{proposition}
\label{sdk}
Let $(M,g)$ be a compact self-dual K\"ahler orbifold
and assume that the set $M^{>0} = \{ p \in M, R(p) > 0 \}$
is non-empty. With the reversed orientation to make $g$ 
anti-self-dual, we have $H^2(M,g) = 0$. 
\end{proposition}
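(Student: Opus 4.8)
The plan is to identify $H^2(M,g)$ with a space of solutions of an overdetermined-elliptic equation, to show that any such solution must vanish on the open set $M^{>0}=\{p\in M:R(p)>0\}$ by a Bochner-type argument, and then to propagate the vanishing by unique continuation.

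\textbf{Reduction.} The cohomology $H^2$ of the complex \eqref{thecomplex} is conformally invariant, and the Hodge theory of this (graded-)elliptic complex goes through on orbifolds after passing to local uniformizing covers; so $H^2(M,g)$ is represented by sections $\phi\in\Gamma(S^2_0(\Lambda^2_+))$ lying in the kernel of the formal adjoint $\mathcal{D}^*$ of $\mathcal{D}=(\mathcal{W}^+)'_g$. Since $\mathcal{D}^*$ has injective principal symbol, such $\phi$ satisfy a determined second-order elliptic equation (e.g. $\mathcal{D}\mathcal{D}^*\phi=0$, which is equivalent to $\mathcal{D}^*\phi=0$), so Aronszajn's unique continuation theorem applies. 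Hence it suffices to prove that every such $\phi$ vanishes on the nonempty open set $M^{>0}$; orbifold points, being isolated (real codimension four), do not interfere with this.

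\textbf{Using the K\"ahler structure.} Because $g$ is self-dual K\"ahler, its Bochner tensor vanishes, and this is exactly the statement that in the anti-self-dual orientation one has $\mathcal{W}^+\equiv 0$ on all of $M$. Consequently the Weyl-curvature term in the relevant Weitzenb\"ock formula for the equation $\mathcal{D}^*\phi=0$ drops out, leaving a formula of the shape $\nabla^*\nabla\phi+cR\,\phi=0$ for a universal positive constant $c$. Now restrict to $M^{>0}$: there the Derdzinski metric $\tilde g=R^{-2}g$ is a self-dual Hermitian Einstein metric with positive Einstein constant, and it is complete on $M^{>0}$ since the conformal factor $R^{-2}$ blows up along $\partial M^{>0}$. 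By conformal invariance of $H^2$ and of the anti-self-dual condition, $\phi$ corresponds on $M^{>0}$ to a solution $\tilde\phi$ of $\nabla^*\nabla_{\tilde g}\tilde\phi+c\,R_{\tilde g}\,\tilde\phi=0$ with $R_{\tilde g}$ a positive constant (again the Weyl term is absent because $\mathcal{W}^+_{\tilde g}=0$). A Weitzenb\"ock computation then gives a differential inequality $\Delta_{\tilde g}|\tilde\phi|^2\ge 2c\,R_{\tilde g}\,|\tilde\phi|^2\ge 0$; since $\tilde g$ is Einstein with positive Ricci curvature (so the Omori--Yau maximum principle is available) and $|\tilde\phi|_{\tilde g}$ is bounded on $(M^{>0},\tilde g)$, one concludes $\tilde\phi\equiv 0$ on $M^{>0}$, hence $\phi\equiv 0$ on $M^{>0}$, hence $\phi\equiv 0$ on $M$ by unique continuation. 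As a warm-up, when the conformal class $[g]$ has positive Yamabe constant (in particular when $R>0$ everywhere, the case $p<r+q$), one can instead run the Bochner argument directly on a conformal metric of positive scalar curvature, and no completeness argument is needed.

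\textbf{Main obstacle.} The delicate points are analytic rather than conceptual: pinning down the exact Weitzenb\"ock formula for $\mathcal{D}^*\phi=0$ on an anti-self-dual background (confirming the sign and value of the constant $c$, and that no further curvature terms survive once $\mathcal{W}^+=0$); tracking the conformal weight of the bundle $S^2_0(\Lambda^2_+)$ inside the complex \eqref{thecomplex}, so as to verify that $|\tilde\phi|_{\tilde g}$ remains bounded near the zero set of $R$ (equivalently, controlling the behaviour of $\phi$ along $\partial M^{>0}$); and making the maximum-principle step rigorous on the complete Einstein end $(M^{>0},\tilde g)$. All of the orbifold subtleties are handled routinely by working on local uniformizing charts.
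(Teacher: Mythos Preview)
Your overall strategy---pass to the Derdzinski Einstein metric $\tilde g=R^{-2}g$, exploit a Bochner-type identity, and finish with unique continuation---matches the paper's, but the central step contains a genuine error. You assert that on $M^{>0}$ the metric $\tilde g$ is complete Einstein with \emph{positive} Einstein constant. When $M^{>0}\subsetneq M$ this is impossible: $(M^{>0},\tilde g)$ is then a noncompact complete manifold, and Bonnet--Myers rules out a positive Einstein constant. Indeed, for the weighted projective spaces with $p>r+q$ the paper states explicitly that the Einstein constant on each component of $\{R\neq 0\}$ is \emph{negative}. With $R_{\tilde g}\le 0$ the inequality $\Delta_{\tilde g}|\tilde\phi|^2\ge 2cR_{\tilde g}|\tilde\phi|^2$ gives nothing, and no Omori--Yau argument produces vanishing. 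A secondary issue: $\mathcal{D}=(\mathcal{W}^+)'$ is a second-order operator, so $\mathcal{D}\mathcal{D}^*$ is fourth order, and no second-order identity of the shape $\nabla^*\nabla\phi+cR\phi=0$ for $\ker\mathcal{D}^*$ is available.

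The paper avoids any sign hypothesis on $R_{\tilde g}$. It uses Itoh's factorisation $\mathcal D_{\tilde g}\mathcal D_{\tilde g}^*=\tfrac{1}{24}(3\nabla^*\nabla+2R_{\tilde g})(2\nabla^*\nabla+R_{\tilde g})$, valid on any Einstein metric, together with the conformal-weight computation $|Z|_{\tilde g}=R^2|Z|_g$. Since $R\to 0$ toward $\partial M^{>0}$, the section $Z$ decays at infinity in the complete metric $\tilde g$. Because $R_{\tilde g}$ is \emph{constant}, a separation-of-variables argument on the end upgrades this $o(1)$ decay to decay faster than any polynomial, which translates back to $|Z|_g=O(r^k)$ for every $k$ along the hypersurface $\{R=0\}$. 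Thus $Z$ vanishes to infinite order there, and unique continuation on the compact orbifold $(M,g)$ gives $Z\equiv 0$. Positivity of $R_{\tilde g}$ is invoked only in the easy case $M^{>0}=M$, where the ordinary maximum principle on a closed space applies directly.
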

\begin{proof}
As mentioned in the Introduction, the metric 
$\tilde{g} = R^{-2} g$ is an Einstein metric, which is 
complete on components of $M^*$. 
If $Z \in S^2_0(\Lambda^2_+(T^*M))$ satisfies $\mathcal{D}_g^* Z = 0$, 
where $ \mathcal{D}^*_{g}$ is the adjoint of $\mathcal{D}_{g}$, 
then from conformal invariance $\mathcal{D}_{\tilde g}^* Z = 0$ 
when $Z$ is viewed as a $(1,3)$ tensor. 
We compute 
\begin{align}
|Z|_{\tilde{g}}^2 =  \tilde{g}^{ip} \tilde{g}^{jq} Z_{ijk}^{\phantom{ijk}l}
Z_{pql}^{\phantom{pql}k}
= R^4  {g}^{ip} {g}^{jq} Z_{ijk}^{\phantom{ijk}l} Z_{pql}^{\phantom{pql}k} = R^4 |Z|_{g}^2,
\end{align}
so we have 
\begin{align}
\label{zeqn}
|Z|_{\tilde{g}} = R^2 |Z|_{g}.
\end{align}
Let $M^*_1$ denote any non-trivial component of $M^*$. 
Since the metric $\tilde{g}$ is Einstein on $M^*_1$, 
from \cite[Proposition 5.1]{Itoh}, we have 
\begin{align}
\label{dds}
\mathcal{D}_{\tilde{g}} \mathcal{D}^*_{\tilde{g}} Z = \frac{1}{24} 
( 3 \nabla^*_{\tilde{g}} \nabla_{\tilde{g}} 
+ 2 R_{\tilde{g}})(2 \nabla^*_{\tilde{g}} \nabla_{\tilde{g}} + R_{\tilde{g}}) Z,
\end{align}
where $R_{\tilde{g}}$ is the (constant) scalar curvature of the 
Einstein metric $\tilde{g}$ on $M^*_1$.  
If $M^{> 0} = M$, then the maximum principle immediately implies that 
$Z = 0$. Otherwise, there is an nontrivial open component of $M^*$, which we 
again call $M^*_1$. The metric $\tilde{g}$ is a complete Einstein metric on 
$M^*_1$, and \eqref{zeqn} shows that $|Z|_{\tilde{g}} (x) = o(1)$ as 
$r \rightarrow 0$, where $r$ is the distance to the zero set of the 
scalar curvature. Viewed on the complete manifold $(M^*_1, \tilde{g})$,
$Z$ is then a decaying solution at infinity of $\eqref{dds}$. 
Since $R_{\tilde{g}}$ is a constant, a standard separation of variables 
argument (see for example \cite{Donn}) 
implies that $Z$ must decay faster than the inverse 
of any polynomial in the $\tilde{g}$ metric (it morever has
exponential decay). Equivalently, 
$|Z|_g = O(r^k)$ as $r \rightarrow 0$ for any $k > 0$. This implies that $Z$ has 
a zero of infinite order along the zero set of the 
scalar curvature. The unique continuation principle for 
elliptic operators (see \cite{Aron}) then implies that 
$Z$ is identically zero. 
\end{proof}
As a corollary, we obtain
\begin{corollary} 
\label{h2wps}
If $g$ is the canonical Bochner-K\"ahler metric with reversed 
orientation on $\overline{\mathbb{CP}}^2_{(r,q,p)}$, then $H^2(M,g) = 0$.
\end{corollary}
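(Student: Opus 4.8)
The plan is to deduce Corollary~\ref{h2wps} directly from Proposition~\ref{sdk}. The first step is to observe that the canonical Bochner-K\"ahler metric of Bryant on $\mathbb{CP}^2_{(r,q,p)}$ meets the hypotheses of that proposition. Indeed, as recalled in the introduction, in complex dimension two the Bochner tensor coincides with the anti-self-dual part of the Weyl tensor relative to the complex orientation, so a Bochner-K\"ahler metric is the same thing as a self-dual K\"ahler orbifold metric, and $\overline{\mathbb{CP}}^2_{(r,q,p)}$ is compact. Reversing the orientation, as in the statement of the corollary, makes $g$ anti-self-dual, which is precisely the framework of Proposition~\ref{sdk}.

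The remaining hypothesis to check is that the open set $M^{>0}=\{p\in M: R(p)>0\}$ is non-empty, where $R$ denotes the scalar curvature of the K\"ahler representative. This can be read off from the trichotomy recalled in the introduction: for $p<r+q$ one has $R>0$ everywhere, for $p=r+q$ one has $R>0$ away from a single point, and for $p>r+q$ the zero set of $R$ is a hypersurface across which $R$ changes sign, so $R$ is positive on one of the two components of its complement. Alternatively, and more robustly, one may note that $\mathbb{CP}^2_{(r,q,p)}$ is a Fano orbifold, so that $\int_M R\,dV_g>0$ for \emph{any} K\"ahler metric on it, which already forces $R$ to be positive somewhere. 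In every case $M^{>0}\neq\emptyset$.

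With both hypotheses in place, Proposition~\ref{sdk} applies verbatim and gives $H^2(M,g)=0$. There is no real obstacle here; the only point deserving a moment's care is the case $p>r+q$, where one wants to be sure that $R$ genuinely attains positive values and is not merely nonpositive with an interior zero locus --- and this is settled either by the sign-change behavior across the zero hypersurface in the David--Gauduchon description of the metric or by the Fano total-scalar-curvature argument just mentioned.
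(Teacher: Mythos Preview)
Your proof is correct and follows essentially the same route as the paper: verify that $M^{>0}\neq\emptyset$ and then invoke Proposition~\ref{sdk}. The paper simply cites \cite[Equation~(2.32)]{DavidGauduchon} for the nonemptiness of $M^{>0}$, whereas you spell this out via the trichotomy (and offer the nice alternative Fano total-scalar-curvature observation), but the logical structure is identical.
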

\begin{proof}
From \cite[Equation (2.32)]{DavidGauduchon},
the set $M^{> 0}$ is non-empty. So this follows immediately from Proposition \ref{sdk}.
\end{proof}

\begin{proof}[Proof of Theorem \ref{wpsthm}]
From Corollary \ref{h2wps}, $H^2(M,g) = 0$, so the actual
moduli space is locally isomorphic to $H^1/ H^0$. Depending upon 
the action of $H^0$, the moduli space
could therefore be of dimension $\dim(H^1)$, $\dim(H^1) - 1$, or $\dim(H^1) - 2$.
The result then follows immediately from the determination of 
$H^1(M,g)$ in Theorem \ref{thm}. 
\end{proof}

\subsection{Final remarks}
We end with a non-rigorous remark on the number-theoretic condition 
appearing in Theorem \ref{thm}.  Figure~\ref{plot} contains a plot 
of the function
\begin{align}
H(r,q,p(j)) = \Big\{ \frac{p}{qr} \Big\} - \Big\{\frac{q^{-1;r}p}{r}\Big\}
\end{align}
for $r = 3$ and $q = 7$, where the horizontal axis indexes the $j$th prime. 
The plot begins at the fifth prime, $11$, and ends with the $100$th prime $541$. 
This, along with other empirical examples, indicates that the cases 
$H > 0$ and $H < 0$ occur with the approximately the same frequency.

\numberwithin{figure}{section}
\begin{figure}
\includegraphics{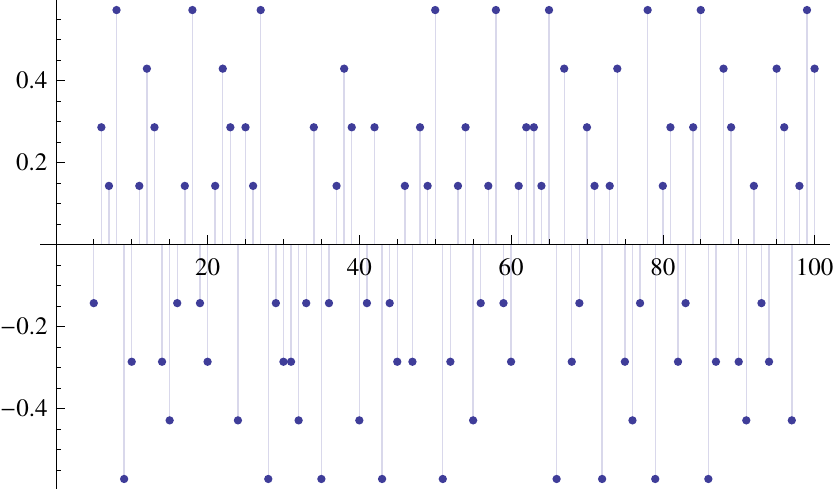}
\caption{$H(3,7,p(j))$}
\label{plot}
\end{figure}

\bibliography{ASD_references}

\end{document}